\documentclass[11pt,twoside,a4paper]{article}
\usepackage{color}
\newcommand{\fleche}[1]{\stackrel{#1}{\longrightarrow}}
\usepackage{feynmp}	
\usepackage{amsmath, amssymb}
\usepackage[latin1]{inputenc}
\usepackage[T1]{fontenc}   
\usepackage[english]{babel}
\setlength{\textwidth}{16cm}
\setlength{\textheight}{25cm}
\topmargin = -25mm
\oddsidemargin = -1mm
\evensidemargin = 0mm



\newcommand{\tdun}[1]{\begin{picture}(10,5)(-2,-1)
\put(0,0){\circle*{2}}
\put(3,-2){\tiny #1}
\end{picture}}

\newcommand{\tddeux}[2]{\begin{picture}(12,5)(0,-1)
\put(3,0){\circle*{2}}
\put(3,5){\circle*{2}}
\put(3,0){\line(0,1){5}}
\put(6,-2){\tiny #1}
\put(6,3){\tiny #2}
\end{picture}}

\newcommand{\tdtroisun}[3]{\begin{picture}(20,12)(-5,-1)
\put(3,0){\circle*{2}}
\put(6,7){\circle*{2}}
\put(0,7){\circle*{2}}
\put(-0.65,0){$\vee$}
\put(5,-2){\tiny #1}
\put(9,5){\tiny #2}
\put(-5,5){\tiny #3}
\end{picture}}

\newcommand{\tdtroisdeux}[3]{\begin{picture}(12,15)(-2,-1)
\put(0,0){\circle*{2}}
\put(0,5){\circle*{2}}
\put(0,10){\circle*{2}}
\put(0,0){\line(0,1){5}}
\put(0,5){\line(0,1){5}}
\put(3,-2){\tiny #1}
\put(3,3){\tiny #2}
\put(3,9){\tiny #3}
\end{picture}}

\newcommand{\tdquatreun}[4]{\begin{picture}(20,12)(-5,-1)
\put(3,0){\circle*{2}}
\put(6,7){\circle*{2}}
\put(0,7){\circle*{2}}
\put(3,7){\circle*{2}}
\put(-0.6,0){$\vee$}
\put(3,0){\line(0,1){7}}
\put(5,-2){\tiny #1}
\put(8.5,5){\tiny #2}
\put(1,10){\tiny #3}
\put(-5,5){\tiny #4}
\end{picture}}

\newcommand{\tdquatredeux}[4]{\begin{picture}(20,20)(-5,-1)
\put(3,0){\circle*{2}}
\put(6,7){\circle*{2}}
\put(0,7){\circle*{2}}
\put(0,14){\circle*{2}}
\put(-.65,0){$\vee$}
\put(0,7){\line(0,1){7}}
\put(5,-2){\tiny #1}
\put(9,5){\tiny #2}
\put(-5,5){\tiny #3}
\put(-5,12){\tiny #4}
\end{picture}}

\newcommand{\tdquatretrois}[4]{\begin{picture}(20,20)(-5,-1)
\put(3,0){\circle*{2}}
\put(6,7){\circle*{2}}
\put(0,7){\circle*{2}}
\put(6,14){\circle*{2}}
\put(-.65,0){$\vee$}
\put(6,7){\line(0,1){7}}
\put(5,-2){\tiny #1}
\put(9,5){\tiny #2}
\put(-5,5){\tiny #4}
\put(9,12){\tiny #3}
\end{picture}}

\newcommand{\tdquatrequatre}[4]{\begin{picture}(20,14)(-5,-1)
\put(3,5){\circle*{2}}
\put(6,12){\circle*{2}}
\put(0,12){\circle*{2}}
\put(3,0){\circle*{2}}
\put(-.65,5){$\vee$}
\put(3,0){\line(0,1){5}}
\put(6,-3){\tiny #1}
\put(6,4){\tiny #2}
\put(9,12){\tiny #3}
\put(-5,12){\tiny #4}
\end{picture}}

\newcommand{\tdquatrecinq}[4]{\begin{picture}(12,19)(-2,-1)
\put(0,0){\circle*{2}}
\put(0,5){\circle*{2}}
\put(0,10){\circle*{2}}
\put(0,15){\circle*{2}}
\put(0,0){\line(0,1){5}}
\put(0,5){\line(0,1){5}}
\put(0,10){\line(0,1){5}}
\put(3,-2){\tiny #1}
\put(3,3){\tiny #2}
\put(3,9){\tiny #3}
\put(3,14){\tiny #4}
\end{picture}}

\newcommand{\edgeelectronun}{\begin{fmffile}{F1}\parbox{12mm}{\begin{fmfgraph}(30,20)
\fmfleft{i}\fmfright{o}\fmf{electron}{i,o}
\end{fmfgraph}}\end{fmffile}}

\newcommand{\edgeelectrondeux}{\begin{fmffile}{F2}\parbox{12mm}{\begin{fmfgraph}(30,20)
\fmfleft{i}\fmfright{o}\fmf{electron}{o,i}
\end{fmfgraph}}\end{fmffile}}

\newcommand{\edgephoton}{\begin{fmffile}{F3}\parbox{12mm}{\begin{fmfgraph}(30,20)
\fmfleft{i}\fmfright{o}\fmf{photon}{i,o}
\end{fmfgraph}}\end{fmffile}}

\newcommand{\edgegluon}{\begin{fmffile}{F4}\parbox{12mm}{\begin{fmfgraph}(30,20)
\fmfleft{i}\fmfright{o}\fmf{gluon}{i,o}
\end{fmfgraph}}\end{fmffile}}

\newcommand{\edgeghostun}{\begin{fmffile}{F5}\parbox{12mm}{\begin{fmfgraph}(30,20)
\fmfleft{i}\fmfright{o}\fmf{ghost}{i,o}
\end{fmfgraph}}\end{fmffile}}

\newcommand{\edgeghostdeux}{\begin{fmffile}{F6}\parbox{12mm}{\begin{fmfgraph}(30,20)
\fmfleft{i}\fmfright{o}\fmf{ghost}{o,i}
\end{fmfgraph}}\end{fmffile}}

\newcommand{\edgephi}{\begin{fmffile}{F12}\parbox{12mm}{\begin{fmfgraph}(30,20)
\fmfleft{i}\fmfright{o}\fmf{plain}{o,i}
\end{fmfgraph}}\end{fmffile}}

\newcommand{\vertexQED}{\begin{fmffile}{F7}\parbox{15mm}{\begin{fmfgraph}(40,30)
\fmfleft{i1}\fmfright{o1,o2}\fmf{photon}{i1,v}\fmf{electron}{o1,v,o2}
\end{fmfgraph}}\end{fmffile}}

\newcommand{\vertexQCDun}{\begin{fmffile}{F8}\parbox{15mm}{\begin{fmfgraph}(40,30)
\fmfleft{i1}\fmfright{o1,o2}\fmf{gluon}{i1,v}\fmf{electron}{o1,v,o2}
\end{fmfgraph}}\end{fmffile}}

\newcommand{\vertexQCDdeux}{\begin{fmffile}{F9}\parbox{15mm}{\begin{fmfgraph}(40,30)
\fmfleft{i1}\fmfright{o1,o2}\fmf{gluon}{i1,v}\fmf{ghost}{o1,v,o2}
\end{fmfgraph}}\end{fmffile}}

\newcommand{\vertexQCDtrois}{\begin{fmffile}{F10}\parbox{15mm}{\begin{fmfgraph}(40,30)
\fmfleft{i1}\fmfright{o1,o2}\fmf{gluon}{i1,v}\fmf{gluon}{o1,v,o2}
\end{fmfgraph}}\end{fmffile}}

\newcommand{\vertexQCDquatre}{\begin{fmffile}{F11}\parbox{15mm}{\begin{fmfgraph}(40,30)
\fmfleft{i1,i2}\fmfright{o1,o2}\fmf{gluon}{i2,v,i1}\fmf{gluon}{o1,v,o2}
\end{fmfgraph}}\end{fmffile}}

\newcommand{\vertexphi}{\begin{fmffile}{F60}\parbox{15mm}{\begin{fmfgraph}(40,30)
\fmfleft{i1}\fmfright{o1,o2}\fmf{plain}{i1,v}\fmf{plain}{o1,v,o2}
\end{fmfgraph}}\end{fmffile}}


\newcommand{\QEDun}{\begin{fmffile}{F13}\parbox{11mm}{\begin{fmfgraph}(40,40)
\fmfleft{i}\fmfright{o1,o2}\fmf{fermion}{o1,v1,v2,v3,o2}\fmf{photon}{i,v2}
\fmffreeze \fmf{photon}{v1,v3}\end{fmfgraph}}\end{fmffile}}

\newcommand{\QEDdeux}{\begin{fmffile}{F14}\parbox{15mm}{\begin{fmfgraph}(40,40)
\fmfleft{i}\fmfright{o}\fmf{photon}{i,v1}\fmf{photon}{v2,o}\fmf{fermion,left,tension=.2}{v1,v2,v1}
\end{fmfgraph}}\end{fmffile}}

\newcommand{\QEDtrois}{\begin{fmffile}{F15}\parbox{15mm}{\begin{fmfgraph}(40,30)
\fmfleft{i}\fmfright{o}\fmf{fermion}{i,v1,v2,o}\fmf{photon,left,tension=0. }{v1,v2}
\end{fmfgraph}}\end{fmffile}}

\newcommand{\QEDquatre}{\begin{fmffile}{F16}\parbox{18mm}{\begin{fmfgraph}(50,30)
\fmfleft{i}\fmfright{o}\fmf{photon}{i,v1}\fmf{photon}{v3,o}\fmf{fermion,tension=.5}{v1,v2,v3,v4,v1}
\fmf{photon}{v2,v4}\fmffixed{(0,h)}{v2,v4}
\end{fmfgraph}}\end{fmffile}}

\newcommand{\QEDcinq}{\begin{fmffile}{F17}\parbox{12mm}{\begin{fmfgraph}(40,40)
\fmfleft{i}\fmfright{o1,o2}\fmf{fermion}{o1,v1,v2,v3,v4,v5,o2}\fmf{photon}{i,v3}
\fmffreeze\fmf{photon}{v1,v5}\fmffreeze\fmf{photon}{v2,v4}
\end{fmfgraph}}\end{fmffile}}

\newcommand{\QEDsept}{\begin{fmffile}{F19}\parbox{12mm}{\begin{fmfgraph}(40,40)
\fmfleft{i}\fmfright{o1,o2}\fmf{fermion}{o1,v1,v2,v3,v4,v5,o2}\fmf{photon}{i,v4}
\fmffreeze\fmf{photon,left,tension=0.2}{v1,v3}\fmffreeze\fmf{photon}{v2,v5}
\end{fmfgraph}}\end{fmffile}}

\newcommand{\QEDhuit}{\begin{fmffile}{F20}\parbox{14mm}{\begin{fmfgraph}(50,50)
\fmfleft{i}\fmfright{o1,o2}\fmf{fermion}{o1,v1,v2,v3,v4,v5,o2}\fmf{photon}{i,v4}
\fmffreeze\fmf{photon,left,tension=0.3}{v2,v3}\fmffreeze\fmf{photon}{v1,v5}
\end{fmfgraph}}\end{fmffile}}

\newcommand{\QEDdix}{\begin{fmffile}{F22}\parbox{14mm}{\begin{fmfgraph}(50,50)
\fmfleft{i}\fmfright{o1,o2}\fmf{fermion}{o1,v1,v2,v3,v4,v5,o2}\fmf{photon}{i,v3}
\fmffreeze\fmf{photon}{v1,v4}\fmffreeze\fmf{photon}{v2,v5}
\end{fmfgraph}}\end{fmffile}}

\newcommand{\QEDonze}{\begin{fmffile}{F23}\parbox{18mm}{\begin{fmfgraph}(50,50)
\fmfleft{i}\fmfright{o1,o2}\fmf{fermion}{o1,v1}\fmf{plain}{v1,w1} \fmf{fermion}{w1,v2} \fmf{plain}{v2,w2} \fmf{fermion}{w2,v3}
\fmf{fermion}{v3,o2}\fmf{photon}{i,v2}\fmffreeze\fmf{photon}{v1,v4}\fmf{photon}{v3,v5}\fmf{fermion,left,tension=0.2}{v4,v5,v4}
\end{fmfgraph}}\end{fmffile}}

\newcommand{\QEDdouze}{\begin{fmffile}{F24}\parbox{18mm}{\begin{fmfgraph}(50,60)
\fmfleft{i}\fmfright{o}\fmf{fermion}{i,v1,v2,o}\fmf{photon,left}{v1,w1} \fmf{photon,right}{v2,w2}
\fmf{fermion,left,tension=.2}{w1,w2,w1}\fmffixed{(0,.2h)}{v1,w1}\fmffixed{(0,.2h)}{v2,w2}
\end{fmfgraph}}\end{fmffile}}

\newcommand{\QEDtreize}{\begin{fmffile}{F25}\parbox{18mm}{\begin{fmfgraph}(50,50)
\fmfleft{i}\fmfright{o}\fmf{fermion}{i,v1,v2,v3,v4,o} \fmf{photon,left}{v1,v3}\fmf{photon,right}{v2,v4}
\fmffixed{(.2h,0)}{v1,v2}\fmffixed{(.2h,0)}{v2,v3}\fmffixed{(.2h,0)}{v3,v4}
\end{fmfgraph}}\end{fmffile}}

\newcommand{\QEDquatorze}{\begin{fmffile}{F26}\parbox{18mm}{\begin{fmfgraph}(50,50)
\fmfleft{i}\fmfright{o}\fmf{photon}{i,v1}\fmf{photon}{v4,o}\fmf{fermion}{v1,v2,v3,v4,v1}\fmf{photon,left}{v2,v3}
\fmffixed{(0.3h,0)}{v2,v3}\fmffixed{(0,.3h)}{v1,v2}\fmffixed{(0,.3h)}{v4,v3}
\end{fmfgraph}}\end{fmffile}}


\newcommand{\QCDun}{\begin{fmffile}{F27}\parbox{15mm}{\begin{fmfgraph}(50,50)
\fmfleft{i}\fmfright{o1,o2}\fmf{fermion}{o1,v1,v2,v3,o2}\fmf{gluon}{i,v2}
\fmffreeze \fmf{gluon}{v1,v3}\end{fmfgraph}}\end{fmffile}}

\newcommand{\QCDunbis}{\begin{fmffile}{F39}\parbox{15mm}{\begin{fmfgraph}(50,50)
\fmfleft{i}\fmfright{o1,o2}\fmf{ghost}{o1,v1,v2,v3,o2}\fmf{gluon}{i,v2}
\fmffreeze \fmf{gluon}{v1,v3}\end{fmfgraph}}\end{fmffile}}

\newcommand{\QCDunter}{\begin{fmffile}{F41}\parbox{25mm}{\begin{fmfgraph}(50,50)
\fmfleft{i}\fmfright{o1,o2}\fmf{gluon}{o2,v3,v2,v1,o1}\fmf{gluon}{i,v2}
\fmffreeze \fmf{gluon}{v1,v3}\end{fmfgraph}}\end{fmffile}}

\newcommand{\QCDdeux}{\begin{fmffile}{F28}\parbox{15mm}{\begin{fmfgraph}(40,40)
\fmfleft{i}\fmfright{o}\fmf{gluon}{i,v1}\fmf{gluon}{v2,o}\fmf{fermion,left,tension=.2}{v1,v2,v1}
\end{fmfgraph}}\end{fmffile}}

\newcommand{\QCDdeuxbis}{\begin{fmffile}{F42}\parbox{15mm}{\begin{fmfgraph}(40,40)
\fmfleft{i}\fmfright{o}\fmf{gluon}{i,v1}\fmf{gluon}{v2,o}\fmf{ghost,left,tension=.2}{v1,v2,v1}
\end{fmfgraph}}\end{fmffile}}

\newcommand{\QCDtrois}{\begin{fmffile}{F29}\parbox{20mm}{\begin{fmfgraph}(50,40)
\fmfleft{i}\fmfright{o}\fmf{fermion}{i,v1,v2,o}\fmf{gluon,left}{v1,v2}\fmffixed{(.6h,0)}{v1,v2}
\end{fmfgraph}}\end{fmffile}}

\newcommand{\QCDtroisbis}{\begin{fmffile}{F43}\parbox{20mm}{\begin{fmfgraph}(50,40)
\fmfleft{i}\fmfright{o}\fmf{ghost}{i,v1,v2,o}\fmf{gluon,left}{v1,v2}\fmffixed{(.6h,0)}{v1,v2}
\end{fmfgraph}}\end{fmffile}}

\newcommand{\QCDquatre}{\begin{fmffile}{F30}\parbox{23mm}{\begin{fmfgraph}(60,40)
\fmfleft{i}\fmfright{o}\fmf{gluon}{i,v1}\fmf{gluon}{v3,o}\fmf{fermion,tension=0.5}{v1,v2,v3,v4,v1}
\fmf{gluon}{v2,v4}\fmffixed{(0,h)}{v2,v4}
\end{fmfgraph}}\end{fmffile}}

\newcommand{\QCDcinq}{\begin{fmffile}{F31}\parbox{18mm}{\begin{fmfgraph}(50,50)
\fmfleft{i}\fmfright{o1,o2}\fmf{ghost}{o1,v1,v2,v3,v4,v5,o2}\fmf{gluon}{i,v3}
\fmffreeze\fmf{gluon}{v1,v5}\fmffreeze\fmf{gluon}{v2,v4}
\end{fmfgraph}}\end{fmffile}}

\newcommand{\QCDsept}{\begin{fmffile}{F33}\parbox{15mm}{\begin{fmfgraph}(50,50)
\fmfleft{i}\fmfright{o1,o2}\fmf{fermion}{o1,v1,v2,v3,v4,v5,o2}\fmf{gluon}{i,v4}
\fmffreeze\fmf{gluon,left,tension=0.2}{v1,v3}\fmffreeze\fmf{gluon}{v2,v5}
\end{fmfgraph}}\end{fmffile}}

\newcommand{\QCDhuit}{\begin{fmffile}{F34}\parbox{20mm}{\begin{fmfgraph}(60,60)
\fmfleft{i}\fmfright{o1,o2}\fmf{ghost}{o1,v1,v2,v3,v4,v5,o2}\fmf{gluon}{i,v4}
\fmffreeze\fmf{gluon,right,tension=0.3}{v3,v2}\fmffreeze\fmf{gluon}{v1,v5}
\end{fmfgraph}}\end{fmffile}}

\newcommand{\QCDdix}{\begin{fmffile}{F36}\parbox{18mm}{\begin{fmfgraph}(60,60)
\fmfleft{i}\fmfright{o1,o2}\fmf{fermion}{o1,v1,v2,v3,v4,v5,o2}\fmf{gluon}{i,v3}
\fmffreeze\fmf{gluon}{v1,v4}\fmffreeze\fmf{gluon}{v2,v5}
\end{fmfgraph}}\end{fmffile}}

\newcommand{\QCDonze}{\begin{fmffile}{F37}\parbox{20mm}{\begin{fmfgraph}(60,60)
\fmfleft{i}\fmfright{o1,o2}\fmf{ghost}{o1,v1}\fmf{dots}{v1,w1} \fmf{ghost}{w1,v2} \fmf{dots}{v2,w2} \fmf{ghost}{w2,v3}
\fmf{ghost}{v3,o2}\fmf{gluon}{i,v2}\fmffreeze\fmf{gluon}{v1,v4}\fmf{gluon}{v3,v5}\fmf{fermion,left,tension=0.2}{v4,v5,v4}
\end{fmfgraph}}\end{fmffile}}

\newcommand{\QCDonzebis}{\begin{fmffile}{F40}\parbox{20mm}{\begin{fmfgraph}(60,60)
\fmfleft{i}\fmfright{o1,o2}\fmf{fermion}{o1,v1}\fmf{plain}{v1,w1} \fmf{fermion}{w1,v2} \fmf{plain}{v2,w2} \fmf{fermion}{w2,v3}
\fmf{fermion}{v3,o2}\fmf{gluon}{i,v2}\fmffreeze\fmf{gluon}{v1,v4}\fmf{gluon}{v3,v5}\fmf{ghost,left,tension=0.2}{v4,v5,v4}
\end{fmfgraph}}\end{fmffile}}

\newcommand{\QCDdouze}{\begin{fmffile}{F38}\parbox{18mm}{\begin{fmfgraph}(50,60)
\fmfleft{i}\fmfright{o}\fmf{fermion}{i,v1,v2,o}\fmf{gluon,left}{v1,w1} \fmf{gluon}{v2,w2}
\fmf{ghost,left,tension=.2}{w1,w2,w1}\fmffixed{(0,.3h)}{v1,w1}\fmffixed{(0,.3h)}{v2,w2}\fmffixed{(.3h,0)}{v1,v2}\fmffixed{(.3h,0)}{w1,w2}
\end{fmfgraph}}\end{fmffile}}

\newcommand{\QCDdouzebis}{\begin{fmffile}{F44}\parbox{18mm}{\begin{fmfgraph}(50,60)
\fmfleft{i}\fmfright{o}\fmf{ghost}{i,v1,v2,o}\fmf{gluon,left}{v1,w1} \fmf{gluon}{v2,w2}
\fmf{fermion,left,tension=.2}{w1,w2,w1}\fmffixed{(0,.3h)}{v1,w1}\fmffixed{(0,.3h)}{v2,w2}\fmffixed{(.3h,0)}{v1,v2}\fmffixed{(.3h,0)}{w1,w2}
\end{fmfgraph}}\end{fmffile}}

\newcommand{\QCDtreize}{\begin{fmffile}{F45}\parbox{23mm}{\begin{fmfgraph}(60,50)
\fmfleft{i}\fmfright{o}\fmf{fermion}{i,v1,v2,v3,v4,o} \fmf{gluon,left}{v1,v3}\fmf{gluon,right}{v2,v4}
\fmffixed{(.4h,0)}{v1,v2}\fmffixed{(.2h,0)}{v2,v3}\fmffixed{(.2h,0)}{v3,v4}
\end{fmfgraph}}\end{fmffile}}

\newcommand{\QCDtreizebis}{\begin{fmffile}{F46}\parbox{18mm}{\begin{fmfgraph}(50,50)
\fmfleft{i}\fmfright{o}\fmf{ghost}{i,v1,v2,v3,v4,o} \fmf{gluon,left}{v1,v3}\fmf{gluon,right}{v2,v4}
\fmffixed{(.4h,0)}{v1,v2}\fmffixed{(.2h,0)}{v2,v3}\fmffixed{(.2h,0)}{v3,v4}
\end{fmfgraph}}\end{fmffile}}

\newcommand{\QCDquatorze}{\begin{fmffile}{F47}\parbox{23mm}{\begin{fmfgraph}(60,60)
\fmfleft{i}\fmfright{o}\fmf{gluon}{i,v1}\fmf{gluon}{v4,o}\fmf{fermion}{v1,v2,v3,v4,v1}\fmf{gluon,left}{v2,v3}
\fmffixed{(0.3h,0)}{v2,v3}\fmffixed{(0,.3h)}{v1,v2}\fmffixed{(0,.3h)}{v4,v3}
\end{fmfgraph}}\end{fmffile}}

\newcommand{\QCDquinze}{\begin{fmffile}{F48}\parbox{15mm}{\begin{fmfgraph}(45,60)
\fmfleft{i1,i2}\fmfright{o1,o2}\fmf{gluon}{i1,v1,v3,o1}\fmf{gluon}{o2,v4,v2,i2}\fmf{gluon}{v2,v1}\fmf{gluon}{v3,v4}
\fmffixed{(0.3h,0)}{v1,v3}\fmffixed{(0.3h,0)}{v2,v4}
\end{fmfgraph}}\end{fmffile}}

\newcommand{\QCDseize}{\begin{fmffile}{F49}\parbox{15mm}{\begin{fmfgraph}(45,60)
\fmfleft{i1,i2}\fmfright{o1,o2}\fmf{gluon}{i1,v1,v3,o1}\fmf{gluon}{o2,v4,v2,i2}\fmf{gluon}{v2,v1}\fmf{gluon}{v3,v4}\fmf{gluon}{v1,v4}
\fmffixed{(0.3h,0)}{v1,v3}\fmffixed{(0.3h,0)}{v2,v4}
\end{fmfgraph}}\end{fmffile}}


\newcommand{\phiun}{\begin{fmffile}{F50}\parbox{11mm}{\begin{fmfgraph}(40,40)
\fmfleft{i}\fmfright{o1,o2}\fmf{plain}{o1,v1,v2,v3,o2}\fmf{plain}{i,v2}
\fmffreeze \fmf{plain}{v1,v3}\end{fmfgraph}}\end{fmffile}}

\newcommand{\phideux}{\begin{fmffile}{F51}\parbox{11mm}{\begin{fmfgraph}(30,30)
\fmfleft{i}\fmfright{o}\fmf{plain}{i,v1}\fmf{plain}{v2,o}\fmf{plain,left,tension=.2}{v1,v2,v1}
\end{fmfgraph}}\end{fmffile}}

\newcommand{\phiquatre}{\begin{fmffile}{F52}\parbox{18mm}{\begin{fmfgraph}(50,30)
\fmfleft{i}\fmfright{o}\fmf{plain}{i,v1}\fmf{plain}{v3,o}\fmf{plain,tension=0.5}{v1,v2,v3,v4,v1}
\fmf{plain}{v2,v4}\fmffixed{(0,h)}{v2,v4}
\end{fmfgraph}}\end{fmffile}}

\newcommand{\phicinq}{\begin{fmffile}{F53}\parbox{12mm}{\begin{fmfgraph}(40,40)
\fmfleft{i}\fmfright{o1,o2}\fmf{plain}{o1,v1,v2,v3,v4,v5,o2}\fmf{plain}{i,v3}
\fmffreeze\fmf{plain}{v1,v5}\fmffreeze\fmf{plain}{v2,v4}
\end{fmfgraph}}\end{fmffile}}

\newcommand{\phisept}{\begin{fmffile}{F54}\parbox{12mm}{\begin{fmfgraph}(40,40)
\fmfleft{i}\fmfright{o1,o2}\fmf{plain}{o1,v1,v2,v3,v4,v5,o2}\fmf{plain}{i,v4}
\fmffreeze\fmf{plain,left,tension=0.2}{v1,v3}\fmffreeze\fmf{plain}{v2,v5}
\end{fmfgraph}}\end{fmffile}}

\newcommand{\phihuit}{\begin{fmffile}{F55}\parbox{14mm}{\begin{fmfgraph}(50,50)
\fmfleft{i}\fmfright{o1,o2}\fmf{plain}{o1,v1,v2,v3,v4,v5,o2}\fmf{plain}{i,v4}
\fmffreeze\fmf{plain,left,tension=0.3}{v2,v3}\fmffreeze\fmf{plain}{v1,v5}
\end{fmfgraph}}\end{fmffile}}

\newcommand{\phidix}{\begin{fmffile}{F57}\parbox{14mm}{\begin{fmfgraph}(50,50)
\fmfleft{i}\fmfright{o1,o2}\fmf{plain}{o1,v1,v2,v3,v4,v5,o2}\fmf{plain}{i,v3}
\fmffreeze\fmf{plain}{v1,v4}\fmffreeze\fmf{plain}{v2,v5}
\end{fmfgraph}}\end{fmffile}}

\newcommand{\phionze}{\begin{fmffile}{F58}\parbox{18mm}{\begin{fmfgraph}(50,50)
\fmfleft{i}\fmfright{o1,o2}\fmf{plain}{o1,v1}\fmf{plain}{v1,w1} \fmf{plain}{w1,v2} \fmf{plain}{v2,w2} \fmf{plain}{w2,v3}
\fmf{plain}{v3,o2}\fmf{plain}{i,v2}\fmffreeze\fmf{plain}{v1,v4}\fmf{plain}{v3,v5}\fmf{plain,left,tension=0.2}{v4,v5,v4}
\end{fmfgraph}}\end{fmffile}}

\newcommand{\phiquatorze}{\begin{fmffile}{F59}\parbox{18mm}{\begin{fmfgraph}(50,50)
\fmfleft{i}\fmfright{o}\fmf{plain}{i,v1}\fmf{plain}{v4,o}\fmf{plain}{v1,v2,v3,v4,v1}\fmf{plain,left}{v2,v3}
\fmffixed{(0.3h,0)}{v2,v3}\fmffixed{(0,.3h)}{v1,v2}\fmffixed{(0,.3h)}{v4,v3}
\end{fmfgraph}}\end{fmffile}}

\input{xy}
\xyoption{all}

\newtheorem{defi}{\indent Definition}
\newtheorem{lemma}[defi]{\indent Lemma}
\newtheorem{cor}[defi]{\indent Corollary}
\newtheorem{theo}[defi]{\indent Theorem}
\newtheorem{prop}[defi]{\indent Proposition}

\newenvironment{proof}{{\bf Proof.}}{\hfill $\Box$}

\newcommand{\bfG}{\mathbf{G}}
\newcommand{\N}{\mathbb{N}}
\newcommand{\Z}{\mathbb{Z}}
\newcommand{\NN}{\mathbb{N}^N_*}
\newcommand{\K}{\mathbb{K}}
\newcommand{\g}{\mathfrak{g}}
\newcommand{\gN}{\mathfrak{g}_{\mathcal{T}}^{(N)}}
\newcommand{\hN}{\mathcal{H}^{D_{M,N}}}
\newcommand{\calB}{\mathcal{B}}
\newcommand{\calE}{\mathcal{E}}
\newcommand{\calHE}{\mathcal{HE}}
\newcommand{\calV}{\mathcal{V}}
\newcommand{\calT}{\mathcal{T}}
\newcommand{\h}{\mathcal{H}}
\newcommand{\fg}{\mathcal{FG}}
\newcommand{\algfg}{\h_{\fg(\calT)}}
\newcommand{\insere}[1]{\overset{#1}{\hookrightarrow}}

\newcommand{\T}{\mathbb{T}}
\newcommand{\F}{\mathbb{F}}

\begin{document}

\title{Mulitgraded Dyson-Schwinger systems}
\date{}
\author{Lo\"\i c Foissy\\ \\
{\small \it Fédération de Recherche Mathématique du Nord Pas de Calais FR 2956}\\
{\small \it Laboratoire de Mathématiques Pures et Appliquées Joseph Liouville}\\
{\small \it Université du Littoral Côte d'Opale-Centre Universitaire de la Mi-Voix}\\ 
{\small \it 50, rue Ferdinand Buisson, CS 80699,  62228 Calais Cedex, France}\\ \\
{\small \it Email: foissy@lmpa.univ-littoral.fr}}

\maketitle

\begin{abstract}
We study systems of combinatorial Dyson-Schwinger equations with an arbitrary number $N$ of coupling constants. 
The considered Hopf algebra of Feynman graphs is $\mathbb{N}^N$-graded,
and we wonder if the graded subalgebra generated by the solution is Hopf or not.

We first introduce a family of pre-Lie algebras which we classify, dually providing systems generating a Hopf subalgebra; 
we also describe the associated groups, as extensions of groups of formal diffeomorphisms on several variables. 

We then consider systems coming from Feynman graphs of a Quantum Field Theory. 
We show that if the number $N$ of independent coupling constants is the number of interactions of the considered QFT, 
then the generated subalgebra is Hopf. For QED, $\varphi^3$ and QCD, we also prove that this is the minimal value of $N$.

All these examples are generalizations of the first family of Dyson-Schwinger systems in the one coupling constant case, called fundamental.
We also give a generalization of the second family, called cyclic.
\end{abstract}

{\bf Keywords.} Dyson-Schwinger systems; Feynman graphs; pre-Lie algebras; combinatorial Hopf algebras.\\

{\bf AMS classification.} 16T05, 81T18, 05C05.

\tableofcontents

\section*{Introduction}

In a Quantum Field Theory (shortly, QFT), the Green functions are developed as a series in the coupling constant, indexed by the set of Feynman graphs. 
These series can be seen at the level of Feynman graphs. They satisfy a certain system $(S)$ of
combinatorial Dyson-Schwinger equation (briefly, SDSE), which uses combinatorial operators of insertion,
and allows to inductively compute the homogeneous components of the Green functions, according to their loop number
\cite{Bergbauer,Broadhurst,Kreimer6,Kreimer2,Kreimer3,Kreimer4,Suijlekom,Kreimer5,Tanasa,Baalen2,Baalen,Yeats}.
Feynman graphs are organized as a Hopf algebra $\h_\fg$, graded by the loop number, and we consider the subalgebra $\h_{(S)}$ of $\h_\fg$
generated by the components of the unique solution of $(S)$.
A natural question is to know if the graded subalgebra generated by the Green functions is Hopf or not.
This problem, and related questions about the nature of the obtained Hopf subalgebras, are the main object of study in \cite{Foissy2,Foissy3,Foissy4,Foissy6}.
It turns out that in the case of QED or $\varphi^3$, which are QFT with only  one interaction, this subalgebra is indeed Hopf;
this is not the case for QCD, with its four interactions. A possibility in this last case is to refine the graduation, or equivalently to
introduce more coupling constants, which makes the subalgebra $\h_{(S)}$ generated by the components of the solution bigger; we shall prove here 
that there exists a $\mathbb{N}^4$-graduation of the Hopf algebra of QCD Feynman graphs, such that  $\h_{(S)}$ is a Hopf subalgebra. \\

The aim of this text is to study SDSE giving a Hopf subalgebra when the Hopf algebra of Feynman graphs is given a $\N^N$-graduation,
generalizing the results of \cite{Foissy3} for the loop number graduation. 
Recall that if we consider only one coupling constant, the Hopf algebra of graphs we consider is $\N$-graded, and we obtained two families of SDSE, 
called fundamental and cyclic, and four operations on SDSE, allowing to obtain all SDSE giving a Hopf subalgebra. 
The graded dual of this Hopf subalgebra is the enveloping algebra of a pre-Lie algebra, described in \cite{Foissy6}.
In the fundamental case, the constant structures of this pre-Lie algebra are polynomial of degree $\leq 1$.  We generalize
this definition to the $\N^N$-graded case (definition \ref{defiprelie}); these objects are called deg1 pre-Lie algebras.
Their classification is done in theorem \ref{theoclassif}. 
As enveloping algebras of free pre-Lie algebras are Grossman-Larson Hopf algebras \cite{Grossman1,Grossman2}, 
dually the enveloping algebra of a deg1 pre-Lie algebra  can be embedded in a Connes-Kreimer Hopf algebra of decorated rooted trees
\cite{Connes,Foissy1}, giving in this way a family of SDSE such the associated subalgebra
is Hopf (theorem \ref{theo14}). We also describe the group associated to such pre-Lie algebras; they all contain a group of formal diffeomorphisms.
  
We then proceed to SDSE coming from a QFT.
We first study all the possible graduations of $\h_\fg$ which are defined from combinatorial datas associated to Feynman graphs, 
such as the number of vertices, of internal or external half-edges or edges, or the external structure: 
we prove that such a $\N^N$-graduation  is associated to a matrix  $C\in M_{N,|\calV|}(\mathbb{Q})$, where $\calV$ is the set of possible vertices 
in the Feynman graphs of the theory (proposition \ref{propgraduation}); the rank of $C$ is of special importance here.
We show how to lift these systems at the level of decorated rooted trees, using a universal property, and we recover in this way
SDSE associated to deg1 pre-Lie algebras previously described, if the rank of $C$ is the cardinality of $\calV$.
We may ask the question of the minimal rank of $C$ required to obtain a Hopf subalgebra: it is smaller than $|\calV|$.
In QED or $\varphi^n$, as this cardinality is $1$, the answer is obviously $1$; for QCD, we prove  in proposition \ref{propQCD} that it is also
$|\calV|=4$. The main idea is to produce primitive Feynman graphs with an arbitrarily large number of vertices of any kind,
and we conjecture that for any QFT with enough primitive Feynman graphs,
the minimal rank of the graduation is the number of interactions of the theory.
We shall conclude with a generalization of the second family of SDSE in the $\N$-graded case, namely cyclic SDSE.\\

This article is organized as follows. The first section contains reminders on Connes-Kreimer Hopf algebras of decorated rooted trees,
their universal properties, their graduations and their graded duals. In the second section, we introduce the notion of combinatorial SDSE in
Connes-Kreimer Hopf algebras; we give three operations on SDSE, and also study the effect of changing the graduation of the subalgebra $\h_{(S)}$
generated by the unique solution of such a SDSE. We then introduce and classify deg1 pre-Lie algebras in the next section, which dually give
us a first family of $\N^N$-graded SDSE. The group associated to these pre-Lie algebras are described in the fourth section.
Feynman graphs of a given QFT, their Hopf-algebraic structure and their SDSE are introduced and studied in the next section.
The last, independent, section deals with a generalization of cyclic SDSE.\\

{\bf Aknowledgment.} The research leading these results was partially supported by the French National Research Agency under the reference
ANR-12-BS01-0017.\\

{\bf Notations.} \begin{enumerate}
\item Let $M$ and $N$ be nonnegative integers. We denote by $[M]$ the set of integers $\{1,\ldots,M\}$ and by $\NN$ the set of nonzero elements of $\N^N$.
\item The canonical basis of $\K^N$ (and of $\mathbb{Z}^N$) is denoted by $(\epsilon_1,\ldots,\epsilon_N)$.
\item Let $a,b \in \K$. We denote by $F_{a,b}(X)$ the formal series:
$$F_{a,b}(X)=\sum_{k=0}^\infty \frac{a(a-b)\ldots (a-b(k-1))}{k!}X^k=\begin{cases}
(1+bX)^{\frac{a}{b}}\mbox{ if }b\neq 0,\\
e^{aX} \mbox{ if }b=0.
\end{cases}$$
Note that for all $a,a',b \in \K$, $F_{a+a',b}(X)=F_{a,b}(X)F_{a',b}(X)$.
\end{enumerate}

\section{Hopf algebras of decorated trees}

Let us start with a few reminders on the Connes-Kreimer Hopf algebras of decorated trees \cite{Connes,Foissy1} and related algebraic structures.
We consider a nonempty set $D$, which we call the set of decorations.

\subsection{Definition and universal property}

\begin{defi}
\begin{enumerate}
\item A tree is a finite graph, connected, with no loop; a rooted tree is a tree with a pointed vertex, called the root;
a rooted tree decorated by $D$ is a pair $(T,d)$, where $T$ is a rooted tree and $d$ is a map from the set $V(T)$ of vertices of $T$ to $D$;
for all $v \in V(T)$, $d(v)$ is called the decoration of $v$. The set of isoclasses of rooted trees decorated by $D$ is denoted by $\T^D$.
\item The algebra $\h^D$ of rooted trees decorated by $D$ is the free commutative associative algebra generated by $\T^D$. 
By definition, the set $\F^D$ of rooted forests decorated by $D$, that is to say  monomials in $\T^D$,
or finite disjoint unions of elements of $\T^D$, is a basis of $\h^D$. The product of $\h^D$ is the disjoint union of decorated rooted forests.
\end{enumerate}\end{defi}

{\bf Examples.} We draw rooted trees with their root at the bottom. 
\begin{enumerate}
\item The rooted trees decorated by $D$ with $n \leq 4$ vertices are:
$$\tdun{$a$},\: a\in D;\hspace{1cm} \tddeux{$a$}{$b$},\: (a,b)\in D^2;\hspace{1cm}
\tdtroisun{$a$}{$c$}{$b$}=\tdtroisun{$a$}{$b$}{$c$},\: \tdtroisdeux{$a$}{$b$}{$c$},\:(a,b,c)\in D^3;$$
$$ \tdquatreun{$a$}{$d$}{$c$}{$b$}=\tdquatreun{$a$}{$c$}{$d$}{$b$}=\ldots
=\tdquatreun{$a$}{$b$}{$c$}{$d$},\: \tdquatredeux{$a$}{$d$}{$b$}{$c$}=\tdquatretrois{$a$}{$b$}{$c$}{$d$},\:
 \tdquatrequatre{$a$}{$b$}{$d$}{$c$}= \tdquatrequatre{$a$}{$b$}{$c$}{$d$},\: \tdquatrecinq{$a$}{$b$}{$c$}{$d$},\:(a,b,c,d)\in D^4.$$
\item  The rooted forests decorated by $D$ with $n\leq 3$ vertices are:
$$1;\hspace{1cm}\tdun{$a$},\: a\in D;\hspace{1cm} \tddeux{$a$}{$b$}, \:\tdun{$a$}\tdun{$b$}=\tdun{$b$}\tdun{$a$},\: (a,b)\in D^2;$$
$$\tdtroisun{$a$}{$c$}{$b$}=\tdtroisun{$a$}{$b$}{$c$},\: \tdtroisdeux{$a$}{$b$}{$c$},\:
\tddeux{$a$}{$b$}\tdun{$c$}=\tdun{$c$}\tddeux{$a$}{$b$},\:
\tdun{$a$}\tdun{$b$}\tdun{$c$}=\tdun{$a$}\tdun{$c$}\tdun{$b$}=\ldots=\tdun{$c$}\tdun{$b$}\tdun{$a$},\:(a,b,c)\in D^3.$$
\end{enumerate}

The algebra $\h^D$ can also be defined by a universal property \cite{Connes,Moerdijk2}:

\begin{prop}
Let $d \in D$. The linear endomorphism $B_d$ of $\h^D$ sends any rooted forest $F \in \F^D$ to $B_d(F)\in \T^D$
obtained in grafting the different trees of $F$ on a common root decorated by $d$.
This family of endomorphisms satisfy the following universal property: if $A$ is a commutative algebra, and for all $d \in D$, $L_d:A\longrightarrow A$
is a linear endomorphism, there exists a unique algebra morphism $\phi:\h^D\longrightarrow A$ such that for all $d\in D$, $\phi \circ B_d=L_d\circ \phi$.
\end{prop}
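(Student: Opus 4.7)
The plan is to define $\phi$ inductively on trees via the grafting operators and then extend multiplicatively. The key structural observation is that every decorated rooted tree $T \in \T^D$ can be uniquely written as $T = B_d(F)$, where $d \in D$ is the decoration of the root of $T$ and $F \in \F^D$ is the forest obtained by deleting the root; this provides a bijection $D \times \F^D \to \T^D$. Using this, I will induct on the number of vertices.

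For existence, I would first define $\phi$ on $\T^D$ by induction: given $T = B_d(T_1 \cdots T_k)$ with $T_1,\ldots,T_k$ trees of strictly smaller size (allowing $k = 0$, in which case $T = B_d(1)$ is the one-vertex tree decorated by $d$), set
\[
\phi(T) := L_d\bigl(\phi(T_1)\cdots\phi(T_k)\bigr),
\]
with the convention that the empty product is $1_A$. Since $\h^D$ is the free commutative algebra on $\T^D$, this assignment extends uniquely to an algebra morphism $\phi : \h^D \to A$, with $\phi(1) = 1_A$. To check the intertwining relation $\phi\circ B_d = L_d\circ \phi$, let $F = T_1 \cdots T_k \in \F^D$; then by definition of the grafting $B_d(F)$ is the tree $B_d(T_1\cdots T_k)$, and
\[
\phi(B_d(F)) = L_d\bigl(\phi(T_1)\cdots\phi(T_k)\bigr) = L_d(\phi(F)),
\]
using multiplicativity of $\phi$ in the last equality.

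For uniqueness, suppose $\psi : \h^D \to A$ is another algebra morphism with $\psi \circ B_d = L_d \circ \psi$ for every $d \in D$. I would prove $\psi(T) = \phi(T)$ for all $T \in \T^D$ by induction on the number of vertices. Writing $T = B_d(T_1\cdots T_k)$, multiplicativity of $\psi$ and the intertwining relation give
\[
\psi(T) = \psi(B_d(T_1\cdots T_k)) = L_d\bigl(\psi(T_1)\cdots\psi(T_k)\bigr),
\]
and by the induction hypothesis each $\psi(T_i)$ equals $\phi(T_i)$, hence $\psi(T) = \phi(T)$. Multiplicativity then forces $\psi = \phi$ on all of $\h^D$.

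There is no real obstacle here; the statement is essentially the initiality of $\h^D$ among commutative algebras equipped with $|D|$ linear endomorphisms. The only point requiring any care is the bookkeeping that the unique-decomposition $T = B_d(F)$ makes the recursive definition of $\phi$ on trees well defined, after which the freeness of $\h^D$ as a commutative algebra on $\T^D$ does the rest.
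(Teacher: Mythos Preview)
Your argument is correct and is the standard proof of this universal property. Note that the paper itself does not supply a proof of this proposition: it is stated as a known result with references to Connes--Kreimer and Moerdijk, so there is no ``paper's own proof'' to compare against. Your inductive construction on the number of vertices, followed by the multiplicative extension via the freeness of $\h^D$ on $\T^D$, is exactly the expected line of reasoning.
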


{\bf Example.} If $a,b,c,d \in D$, $B_a(\tdun{$b$}\tddeux{$c$}{$d$})=\tdquatretrois{$a$}{$c$}{$d$}{$b$}$.\\

This universal property can be used to define the Connes-Kreimer coproduct of $\h^D$:

\begin{prop} \begin{enumerate}
\item There exists a unique coproduct on $\h^D$ such that for all $d\in D$, for all $x\in \h^D$:
$$\Delta\circ B_d(x)=B_d(x)\otimes 1+(Id\otimes B_d)\circ \Delta(x).$$
With this coproduct, $\h^D$ becomes a Hopf algebra. Its counit is the map:
$$\varepsilon:\left\{\begin{array}{rcl}
\h^D&\longrightarrow&\K\\
F\in \F^D&\longrightarrow&\delta_{F,1}.
\end{array}\right.$$
\item Let $A$ be a commutative Hopf algebra, and for all $d\in D$, let $L_d:A\longrightarrow A$ a linear endomorphism such that for all $x\in A$:
$$\Delta\circ L_d(x)=L_d(x)\otimes 1+(Id \otimes L_d)\circ \Delta(x).$$
The unique algebra morphism $\phi:\h^D\longrightarrow A$ such that for all $d\in D$, $\phi \circ B_d=L_d\circ \phi$ is a Hopf algebra morphism.
\end{enumerate}\end{prop}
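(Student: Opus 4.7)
The plan is to establish both parts by induction on the total number of vertices of a forest.

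For part (1), uniqueness is immediate: every nonempty forest is a product of trees, each tree has the form $B_d(F')$ for some strictly smaller forest $F'$, and $\Delta(1)=1\otimes 1$ is forced, so the cocycle relation together with multiplicativity determines $\Delta$ completely. For existence, I would set $\Delta(1)=1\otimes 1$, inductively define $\Delta(B_d(F'))=B_d(F')\otimes 1+(Id\otimes B_d)\Delta(F')$, and extend multiplicatively using the commutative algebra structure on $\h^D\otimes\h^D$; since $\h^D$ is freely generated as a commutative algebra by $\T^D$, this yields a well-defined algebra morphism $\Delta:\h^D\to \h^D\otimes\h^D$. Coassociativity is checked by induction on the number of vertices: both $(\Delta\otimes Id)\circ\Delta$ and $(Id\otimes\Delta)\circ\Delta$ are algebra morphisms into $\h^D\otimes\h^D\otimes\h^D$, so it suffices to check equality on trees $t=B_d(F)$. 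Writing $\Delta(F)=\sum F_{(1)}\otimes F_{(2)}$ and expanding both sides using the defining relation of $\Delta$ twice, the difference between the two expressions reduces to $(Id\otimes Id\otimes B_d)\bigl((\Delta\otimes Id)\Delta(F)-(Id\otimes\Delta)\Delta(F)\bigr)$, which vanishes by the induction hypothesis on $F$. The counit identities $(\varepsilon\otimes Id)\Delta=Id=(Id\otimes\varepsilon)\Delta$ are verified by the same kind of induction; and since $\h^D$ is graded by the number of vertices with $(\h^D)_0=\K$, the antipode exists automatically as for any connected graded bialgebra.

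For part (2), the algebra universal property of $\h^D$ produces a unique algebra morphism $\phi:\h^D\to A$ with $\phi\circ B_d=L_d\circ\phi$ for all $d\in D$. To see that $\phi$ respects the counit, apply $\varepsilon_A\otimes Id$ to the cocycle relation $\Delta_A\circ L_d(y)=L_d(y)\otimes 1+(Id\otimes L_d)\Delta_A(y)$ and use $(\varepsilon_A\otimes Id)\Delta_A=Id$ to deduce $\varepsilon_A\circ L_d=0$, so $\varepsilon_A\circ\phi$ vanishes on every tree; combined with $\varepsilon_A\circ\phi(1)=1$ and multiplicativity, this yields $\varepsilon_A\circ\phi=\varepsilon$. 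For coalgebra compatibility, the equality $\Delta_A\circ\phi=(\phi\otimes\phi)\circ\Delta$ is checked by induction on the number of vertices, reducing to trees $t=B_d(F)$ since both sides are algebra morphisms $\h^D\to A\otimes A$. For such $t$, chain $\Delta_A\circ\phi\circ B_d=\Delta_A\circ L_d\circ\phi=(L_d\circ\phi)\otimes 1+(Id\otimes L_d)\circ\Delta_A\circ\phi$, then apply the induction hypothesis on $F$ and the relation $L_d\circ\phi=\phi\circ B_d$ to rewrite this as $(\phi\otimes\phi)\circ\Delta\circ B_d$ applied to $F$. Compatibility with antipodes is then automatic, so $\phi$ is a Hopf algebra morphism.

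The main obstacle is the coassociativity argument in part (1); the computation itself is routine, but one must organize the bookkeeping of the three tensor factors carefully when the cocycle relation is iterated, and apply the induction hypothesis to the full coassociativity identity on the smaller forest $F$ rather than just to $\Delta(F)$.
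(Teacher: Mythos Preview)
Your argument is correct and follows the standard route for this result. Note, however, that the paper does not actually prove this proposition: it is stated as a known fact from the Connes--Kreimer and Moerdijk literature, and the paper moves directly to the combinatorial description via admissible cuts. Your inductive construction of $\Delta$, the reduction of coassociativity to trees via the free generation of $\h^D$ by $\T^D$, and the induction for part~(2) are exactly the expected proofs; the only remark is that in part~(1) you could alternatively invoke the algebra universal property itself (applied to the target $\h^D\otimes\h^D$ with the operators $x\mapsto B_d(x)\otimes 1+(Id\otimes B_d)(x)$) to get existence of $\Delta$ as an algebra morphism in one stroke, rather than defining it by hand on trees and extending multiplicatively---but this is a cosmetic difference.
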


This coproduct admits a combinatorial description in terms of admissible cuts. For example, if $a,b,c,d \in D$:
$$\Delta \tdquatredeux{$d$}{$c$}{$b$}{$a$}=\tdquatredeux{$d$}{$c$}{$b$}{$a$} \otimes 1+1\otimes \tdquatredeux{$d$}{$c$}{$b$}{$a$}
+\tddeux{$b$}{$a$} \otimes \tddeux{$d$}{$c$}+\tdun{$a$}\otimes \tdtroisun{$d$}{$c$}{$b$}+
\tdun{$c$} \otimes \tdtroisdeux{$d$}{$b$}{$a$} +\tddeux{$b$}{$a$}\tdun{$c$}\otimes \tdun{$d$}
+\tdun{$a$}\tdun{$c$}\otimes \tddeux{$d$}{$b$}.$$

Here is another application of the universal property:

\begin{prop}\label{propmorphismes}
Let $a=(a_d)_{d\in D}$ be a family of elements of $\K$. We denote by $\phi_a$ the unique Hopf algebra endomorphism of $\h^D$ such that 
for all $d\in D$, $\phi \circ B_d=a_dB_d\circ \phi$. For any forest $F\in \F^D$, denoting by $V(F)$ the set of vertices of $F$:
$$\phi_a(F)=\left(\prod_{v\in V(F)} a_{d(v)} \right)F.$$
Consequently, if for all $d\in D$, $a_d\neq 0$, $\phi_a$ is an automorphism.
\end{prop}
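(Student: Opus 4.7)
The plan is to first produce $\phi_a$ via the universal property stated in the preceding proposition, and then establish the explicit formula by induction on the number of vertices.

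For existence and uniqueness, I would apply the universal property to the family of linear endomorphisms $L_d = a_d B_d : \h^D \to \h^D$. One has to check that each $L_d$ satisfies the required $1$-cocycle condition
$$\Delta \circ L_d(x) = L_d(x)\otimes 1 + (Id \otimes L_d)\circ \Delta(x),$$
but this follows immediately from the analogous property of $B_d$ by multiplying both sides by the scalar $a_d$. The universal property then yields a unique Hopf algebra morphism $\phi_a : \h^D \to \h^D$ with $\phi_a \circ B_d = a_d B_d \circ \phi_a = L_d \circ \phi_a$ for all $d \in D$.

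For the explicit formula, I would proceed by induction on the total number of vertices $n = |V(F)|$ of a forest $F \in \F^D$. The base case $n=0$ is $F=1$, for which the empty product convention gives $\phi_a(1) = 1$. For a tree $T = B_d(F')$ with $n \geq 1$, write $T = B_d(F')$ where $F'$ has $n-1$ vertices; then using $\phi_a \circ B_d = a_d B_d \circ \phi_a$ and the induction hypothesis,
$$\phi_a(T) = a_d B_d(\phi_a(F')) = a_d \left(\prod_{v\in V(F')} a_{d(v)}\right) B_d(F') = \left(\prod_{v\in V(T)} a_{d(v)}\right) T,$$
since $V(T) = V(F') \sqcup \{\text{root}\}$ and the root has decoration $d$. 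For a general forest $F = T_1 \cdots T_k$ with $k \geq 2$, the multiplicativity of $\phi_a$ combined with the already-established case of trees and the disjoint decomposition $V(F) = V(T_1) \sqcup \cdots \sqcup V(T_k)$ gives the formula directly.

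Finally, for the automorphism statement, assume $a_d \neq 0$ for all $d \in D$ and let $b = (a_d^{-1})_{d \in D}$. The explicit formula shows that on any forest $F$,
$$\phi_a \circ \phi_b(F) = \left(\prod_{v \in V(F)} a_{d(v)} a_{d(v)}^{-1}\right) F = F,$$
and symmetrically $\phi_b \circ \phi_a = Id$, so $\phi_a$ is invertible with inverse $\phi_b$. I expect no real obstacle here: the argument is a routine induction once the universal property has produced the morphism, and the only point requiring any care is the verification of the cocycle condition for $a_d B_d$, which is immediate.
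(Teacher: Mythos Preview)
Your proof is correct. The paper takes the reverse route: it defines $\varphi$ directly by the explicit formula, checks that $\varphi$ is an algebra morphism (via $V(F_1F_2)=V(F_1)\sqcup V(F_2)$) satisfying $\varphi\circ B_d=a_d B_d\circ\varphi$ (via $V(B_d(F))=V(F)\sqcup\{\text{root}\}$), and then invokes uniqueness in the universal property to conclude $\varphi=\phi_a$. Your approach instead produces $\phi_a$ first from the universal property (after checking the cocycle condition for $a_dB_d$, which the paper leaves implicit) and then establishes the formula by induction. The two arguments are essentially dual to each other; the paper's version is marginally shorter since it avoids a formal induction, while yours is more explicit about why $\phi_a$ is a \emph{Hopf} algebra morphism and about the construction of the inverse.
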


\begin{proof} We consider the endomorphism $\varphi$ defined by:
$$\forall F\in \F^D,\:\varphi(F)=\left(\prod_{v\in V(F)} a_{d(v)} \right)F.$$
Let $F,F_1,F_2\in \F^D$. As $V(F_1F_2)=V(F_1)\sqcup V(F_2)$, $\varphi(F_1F_2)=\varphi(F_1)\varphi(F_2)$, 
$\varphi$ is an algebra endomorphism.  As $V(B_d(F))=V(F)\sqcup\{root(F)\}$, $\varphi(B_d(F))=a_dB_d(\varphi(F))$. 
Consequently, $\varphi \circ B_d=a_dB_d\circ \varphi$. By unicity in the universal property, $\varphi=\phi_a$. \end{proof}

\subsection{Graduation and duality}

\begin{defi} \begin{enumerate}
\item  A $\N^N$-graded set is a pair $(D,deg)$, where $D$ is a set and $deg:D\longrightarrow \N^N$ is a map. For all $\alpha \in \N^N$,
we put $D_\alpha=deg^{-1}(\alpha)$. We shall say that the $\N^N$-graded $D$ is connected if $D_0=\emptyset$ and if for all $\alpha \in \N^N$,
$deg^{-1}(\alpha)$ is finite.
\item Let $D$ be a $\N^N$-graded connected set. For all forest $F\in \F^D$, we put:
$$deg(F)=\sum_{v\in V(F)} deg(d(v)).$$
This induces a connected $\N^N$-graduation of the Hopf algebra $\h^D$, with:
$$\forall \alpha \in \N^N,\: (\h^D)_\alpha=Vect(F\in \F^D\mid deg(F)=\alpha).$$ 
Moreover, for this graduation, $B_d$ is homogeneous of degree $deg(d)$ for all  $d\in D$.
\end{enumerate}\end{defi}

If $D$ is a $\N^N$-graded connected set, then, as $\h^D$ is a graded connected Hopf algebra, its graded dual $(\h^D)^*$ is also a Hopf algebra
\cite{Hoffman,Panaite}. As a vector space, it can be identified with $\h^D$, by the help of the symmetric pairing defined by:
$$\forall F,G \in \F^D,\: \langle F,G\rangle=s_F \delta_{F,G},$$
where $s_F$ is the number of symmetries of $F$. The coproduct $\Delta'$ of $(\h^D)^*$ is given by:
$$\forall T_1,\ldots,T_k\in \T^D,\: \Delta'(T_1\ldots T_k)=\sum_{I\subseteq [k]} \left(\prod_{i\in I} T_i\right)\otimes \left(\prod_{i\notin I} T_i\right).$$
Its product $\star$ is given by graftings: this is the Grossman-Larson product \cite{Grossman1,Grossman2,Grossman3}.
For example:
$$\tddeux{$a$}{$b$}\star\tddeux{$c$}{$d$}=\tddeux{$a$}{$b$}\tddeux{$c$}{$d$}+\tdquatredeux{$c$}{$d$}{$a$}{$b$}+\tdquatrecinq{$c$}{$d$}{$a$}{$b$}.$$
Note that this graded dual does not depend of the choice of the connected graduation of $D$. \\

By the Cartier-Quillen-Milnor-Moore's theorem, $(\h^D)^*$ is the enveloping algebra of a Lie algebra $\g^D$. By construction of the coproduct $\Delta'$,
the set $\T^D$ is a basis of $\g^D$; by definition of the Grossman-Larson product, for all $T,T' \in \T^D$:
$$[T,T']=\sum_{v'\in V(T')} \mbox{grafting of $T$ on $v'$}-\sum_{v\in V(T)} \mbox{grafting of $T'$ on $v$}.$$
We define a product $*$ on $\g^D$ by:
$$T*T'=\sum_{v'\in V(T')} \mbox{grafting of $T$ on $v'$}.$$
For any $x,y\in \g^D$, $[x,y]=x*y-y*x$. For example:
\begin{align*}
\tdun{$c$}*\tddeux{$a$}{$b$}&=\tdtroisun{$a$}{$b$}{$c$}+\tdtroisdeux{$a$}{$b$}{$c$},&
\tddeux{$a$}{$b$}*\tdun{$c$}&=\tdtroisdeux{$c$}{$a$}{$b$}.
\end{align*}
 This product is not associative, but is pre-Lie:

\begin{defi}
A (left) pre-Lie algebra is a pair $(V,*)$, where $V$ is a vector space and $*$ is a bilinear product on $V$, such that for all $x,y,z\in V$:
$$(x*y)*z-x*(y*z)=(y*x)*z-y*(x*z).$$
If $(V,*)$ is pre-Lie, the bracket defined by $[x,y]=x*y-y*x$ is a Lie bracket.
\end{defi}

Moreover, Chapoton and Livernet proved, using the theory of operads, that $\g^D$ is a free pre-Lie algebra \cite{Chapoton1,Chapoton2}:

\begin{theo}
Let $A$ be a pre-Lie algebra and let $a_d\in A$ for all $d\in D$.
There exists a unique pre-Lie algebra morphism $\phi:\g^D\longrightarrow A$ such that $\phi(\tdun{$d$})=a_d$ for all $d\in D$. 
In other words, $\g^D$ is, as a pre-Lie algebra, freely generated by the elements $\tdun{$d$}$, $d\in D$.
\end{theo}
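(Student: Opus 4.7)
The plan is to prove uniqueness and existence separately, with uniqueness coming from the fact that the one-vertex trees pre-Lie generate $\g^D$, and existence coming from the operadic identification of the pre-Lie operad with rooted trees.

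For uniqueness, I would show by induction on the number of vertices that every tree of $\T^D$ belongs to the pre-Lie subalgebra generated by the one-vertex trees $\tdun{$d$}$, $d\in D$. The base case $n=1$ is immediate. For the induction step, given $T=B_d(T_1\cdots T_k)$ with $n\geq 2$ vertices, I would use the identity
$$T_1*B_d(T_2\cdots T_k)=T+\sum_{v'}\text{(grafting of $T_1$ onto the non-root vertex $v'$ of }B_d(T_2\cdots T_k)\text{)}.$$
All the summands in the right-hand correction term have $n$ vertices, but strictly smaller subtrees hanging off their root when compared with $T$ in an appropriate lexicographic order on the multiset of children subtree sizes, which lets a secondary induction conclude. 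Since any pre-Lie morphism $\phi:\g^D\longrightarrow A$ satisfying $\phi(\tdun{$d$})=a_d$ is forced by this generation to take prescribed values on every basis tree, uniqueness follows.

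For existence, the cleanest approach is to invoke the operadic theorem of Chapoton and Livernet, which identifies the pre-Lie operad $\mathrm{PreLie}$ with the operad whose arity $n$ component is spanned by rooted trees on $\{1,\ldots,n\}$, the operadic composition being Grossman-Larson grafting. The free pre-Lie algebra on the set $D$ is then
$$\bigoplus_{n\geq 1}\mathrm{PreLie}(n)\otimes_{S_n}(\K D)^{\otimes n},$$
which is canonically identified, as a vector space, with the span of $\T^D$, the pre-Lie product being exactly the grafting $*$ defined above. Hence $\g^D$ is itself the free pre-Lie algebra on $D$, so the universal morphism $\phi$ sending $\tdun{$d$}$ to $a_d$ exists and is defined by transporting the operadic action of $\g^D$ onto $A$ via the generators.

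The main obstacle, if one wants a proof bypassing operads, is the verification of the pre-Lie identity for a directly constructed $\phi$: one would define $\phi$ recursively along the uniqueness induction above, and would then have to check both that the value $\phi(T)$ is independent of the choice of subtree $T_1$ isolated at each step and that $\phi(X*Y)-\phi(X)*\phi(Y)$ vanishes on pairs of basis trees, via a double induction on the total number of vertices. This is the combinatorial core of Chapoton-Livernet's result and is precisely what the operadic framework packages into a conceptual statement.
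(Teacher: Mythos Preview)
Your proposal is correct and aligns with the paper's treatment: the paper does not prove this theorem at all but simply attributes it to Chapoton and Livernet's operadic description of the pre-Lie operad, which is exactly what you invoke for existence. Your separate uniqueness argument is additional content not present in the paper; it is sound, though the secondary induction is more cleanly phrased as an induction on the number $k$ of children of the root (the correction terms from grafting $T_1$ onto a non-root vertex of $B_d(T_2\cdots T_k)$ yield trees with $k-1$ children at the root, not trees with lexicographically smaller child-size multisets), with the base case $k=1$ handled by $T_1*\tdun{$d$}=B_d(T_1)$.
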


\subsection{Completion}

We graduate $\h^D$ by the number of vertices of forests, that is to say we consider the graduation induced by the map $deg:D\longrightarrow \N$,
sending every element of $D$ to $1$. This graduation induces a distance $d$ on $\h^D$, defined by:
$$d(f,g)=2^{-val(f-g)}.$$
The metric space $\h^D$ is not complete: its completion is denoted by $\widehat{\h^D}$. As a vector space, it is the space of commutative formal series
in $\T^D$. The product of $\h^D$, being homogeneous of degree $0$, is continuous, so can be extended to $\widehat{\h^D}$:
this gives the usual product of formal series. Similary, for any $d\in D$, $B_d$, being homogeneous of degree $1$, is continuous so can be extended
to a map $B_d:\widehat{\h^D}\longrightarrow \widehat{\h^D}$.

\section{Multigraded SDSE}

\subsection{Definition}

\begin{defi}\label{defiprelie}
Let $D=D_1\sqcup \ldots \sqcup D_M$ be a partitioned set.
Let $(f_d)_{d\in D}$ be a family of elements of $\K\langle\langle x_1,\ldots,x_M\rangle\rangle$. The system of Dyson-Schwinger equations
(briefly, SDSE) associated to these elements is:
$$\forall i\in [M],\: X_i=\sum_{d\in D_i} B_d(f_d(X_1,\ldots,X_M)),$$
where $X=(X_1,\ldots,X_M)$ belongs to $\widehat{\h^D}^M$.
\end{defi}

By convenience, we generally index the family of unknows by $[M]$, but it is of course possible to index them by any finite set.

\begin{prop}
Let $(S)$ be a SDSE. It has a unique solution.
\end{prop}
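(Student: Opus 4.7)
My plan is to prove both existence and uniqueness by building the solution $X=(X_1,\ldots,X_M)$ one homogeneous component at a time, using the number-of-vertices grading on $\widehat{\h^D}$ introduced at the end of Section~1. The whole argument rests on the fact that each $B_d$ is homogeneous of degree one, and consequently strictly raises valuation.

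First I would observe that any solution must satisfy $\mathrm{val}(X_i)\geq 1$ for every $i$, since the right-hand side of each equation lies in the image of operators $B_d$ and hence contains no constant term. This valuation bound is precisely what makes the substitution $f_d(X_1,\ldots,X_M)$ converge in $\widehat{\h^D}$: a monomial $X_1^{k_1}\cdots X_M^{k_M}$ has valuation at least $k_1+\cdots+k_M$, so only finitely many of the (possibly infinitely many) monomials of $f_d$ contribute to any given homogeneous piece of $f_d(X)$. In particular, the whole right-hand side of the SDSE is a well-defined element of $\widehat{\h^D}^M$.

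Writing $X_i^{(n)}$ for the degree-$n$ component of $X_i$, the equation is equivalent to the countable family
$$X_i^{(n)}=\sum_{d\in D_i}B_d\bigl(f_d(X)^{(n-1)}\bigr),\qquad n\geq 1,$$
and the right-hand side depends only on the pieces $X_j^{(m)}$ with $m\leq n-1$, because $B_d$ raises degree by $1$ and the $X_j$ have valuation $\geq 1$. So one can construct the solution by induction on $n$: the base case gives $X_i^{(1)}=\sum_{d\in D_i}f_d(0,\ldots,0)\,\tdun{$d$}$, and at each later step $X_i^{(n)}$ is forced by the induction hypothesis. The same induction yields uniqueness, since any two solutions must coincide in every degree. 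The only point requiring any care is the summability of $f_d(X)$ just mentioned; beyond that, the argument is entirely formal, and I do not anticipate a genuine obstacle.
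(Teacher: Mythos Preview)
Your argument is correct. The paper takes a slightly different but equivalent route: rather than building the solution degree by degree, it packages the same observation (that each $B_d$ raises valuation by one) into a contracting-map argument. Defining $\Theta:\widehat{\h^D_+}^M\to\widehat{\h^D_+}^M$ by sending $X$ to the right-hand side of $(S)$, one gets $d(\Theta(f),\Theta(g))\leq\tfrac12\, d(f,g)$ in the metric of Section~1.3, and the Banach fixed-point theorem on the complete space $\widehat{\h^D_+}^M$ yields existence and uniqueness in one stroke. Your inductive construction is essentially the unrolled version of this fixed-point iteration: it is more explicit and does not invoke the metric or completeness, whereas the paper's version is terser. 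Both proofs rest on exactly the same key fact, and your remark about the summability of $f_d(X_1,\ldots,X_M)$ (which the paper leaves implicit) is a welcome point of care.
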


\begin{proof} If $X=(X_1,\ldots,X_M)$  is a solution of $(S)$, then for all $i$, $X_i$ is a infinite span of trees, so belongs to the augmentation ideal $\h_+^D$.
Hence, it is enough to prove that $(S)$ has a unique solution in $\widehat{\h^D_+}^M$.
Let us consider the following map:
$$\Theta:\left\{\begin{array}{rcl}
\widehat{\h^D}^M\hspace{-3mm}&\longrightarrow&\widehat{\h^D}^M\\
(X_1,\ldots,X_M)&\longrightarrow&\left( \displaystyle\sum_{d\in D_i} B_i(f_d(X_1,\ldots,X_M))\right)_{i\in [M]}.
\end{array}\right.$$
As $B_d$ is homogeneous of degree $1$ for all $d$, we obtain that for all $f,g \in \widehat{\h^D}^M$:
$$d(\Theta(f),\Theta(f))\leq \frac{1}{2} d(f,g).$$
So $\Theta$ is a contracting map. As $\widehat{\h^D}^M$ is complete, $\Theta$ has a unique fixed point $(X_1,\ldots,X_M)$,
which is the unique solution of $(S)$. \end{proof} \\

{\bf Remarks.} \begin{enumerate}
\item As the $D_i$ are disjoint, the nonzero $X_i$ are sum of trees with roots decorated by elements of $D_i$, so are algebraically independent.
\item If $X_i=0$, we can delete the $i$-th equation of $(S)$ and replace $f_d$ by $(f_d)_{\mid x_i=0}$ for all $d\in D$, without changing $\h_{(S)}$.
\end{enumerate}

{\bf We now assume that all the $X_i$ are nonzero}  (and, as a consequence, are algebraically independent).

\begin{defi}
Let $D$ be a connected $\N^N$-graded set, inducing a  connected $\N^N$-graduation of the Hopf algebra $\h^D$. 
Let $(S)$ be a SDSE on $D$. 
\begin{enumerate}
\item The unique solution of $S$ is denoted by $X=(X_1,\ldots,X_M)$, and the homogeneous components of $X_i$ 
are denoted by $X_i(\alpha)$, $i\in [M]$, $\alpha\in \N^N$. 
\item The subalgebra of $\h^D$ generated by the $X_i(\alpha)$'s is denoted by $\h_{(S)}$. 
\item We shall say that $(S)$ is Hopf if $\h_{(S)}$ is a Hopf subalgebra of $\h^D$.
\end{enumerate}\end{defi} 

Note that $\h_{(S)}$ depends on the choice of the graduation. \\

{\bf Example.} Here is an example of SDSE. Le us fix $k\geq 1$ and $d_0,\ldots,d_k\in \mathbb{N}$. 
For any $\alpha=(\alpha_0,\ldots,\alpha_k)\in [N]^{k+1}$, we put:
$$deg(\alpha)=d_0\epsilon_{\alpha_0}+\ldots+d_k\epsilon_{\alpha_k}\in \mathbb{Z}^N.$$
The set of decorations is:
$$D=\{\alpha\in [N]^{k+1}\mid deg(\alpha)\in \N^N\setminus\{0\}\}.$$
The Hopf algebra $\h^D$ inherits a connected $\N^N$-graduation. We consider the SDSE:
\begin{align}
\label{system}
(S)_{FdB}:\: \forall i\in [N],\:X_i=\sum_{\alpha\in [N]^k} B_{(i,\alpha)}\left((1+X_{\alpha_1})^{d_1}\ldots (1+X_{\alpha_k})^{d_k}(1+X_i)^{d_0+1}\right).
\end{align}
In particular, if $(d_0,\ldots,d_k)=(0,1,\ldots,1)$, this gives:
\begin{align*}
\forall i\in [N],\:X_i=\sum_{\alpha\in [N]^k} B_{(i,\alpha)}\left((1+X_{\alpha_1})\ldots (1+X_{\alpha_k})\right).
\end{align*}
Taking $k=2$, the components of $X$ are a commutative version of the elements of Definition 20 in \cite{Foissy2}, which generate a Hopf algebra
isomorphic to the free Faà di Bruno Hopf algebra on $N$ variables. We shall prove that it is indeed a Hopf SDSE, related to the Faà di Bruno Hopf algebra
on $N$ variables.

\subsection{Simplification of the hypotheses}

\begin{lemma}
Let $(S)$ be a Hopf SDSE, and let $d\in D$. If $f_d(0,\ldots,0)=0$, then $f_d=0$. 
\end{lemma}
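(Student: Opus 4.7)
The plan is to argue by contradiction: assume $f_d\neq 0$ while $f_d(0,\ldots,0)=0$, and let $j_0\in[M]$ be the unique index with $d\in D_{j_0}$. I will build a linear form $\lambda$ on $\h^D$ that vanishes on $\h_{(S)}$ but nevertheless extracts $f_d(X_1,\ldots,X_M)$ from the coproduct of $X_{j_0}$; the Hopf property will then force $f_d(X_1,\ldots,X_M)=0$, and algebraic independence will deliver the contradiction $f_d=0$.

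First I will locate $\tdun{$d$}$ in $\h^D$. Because $\h_{(S)}$ is generated as an algebra by the single-tree sums $X_i(\alpha)$, every product of two or more generators is supported on forests with at least two trees; therefore the intersection of $\h_{(S)}$ with the linear span of $\T^D$ is the span of the $X_i(\alpha)$'s. Only trees rooted in $D_{j_0}$ can equal $\tdun{$d$}=B_d(1)$, so only $X_{j_0}(deg(d))$ could possibly contribute, with coefficient equal to the constant term $f_d(0,\ldots,0)=0$. Consequently, the linear form $\lambda\colon\h^D\to\K$ sending the forest $\tdun{$d$}$ to $1$ and every other basis forest to $0$ satisfies $\lambda|_{\h_{(S)}}=0$.

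Next I will compute $(\mathrm{id}\otimes\lambda)(\Delta(X_{j_0}))$ in two ways. On the one hand, the Hopf hypothesis gives $\Delta(X_{j_0}(\alpha))\in\h_{(S)}\otimes\h_{(S)}$ for every $\alpha$, which $\mathrm{id}\otimes\lambda$ annihilates; summing over $\alpha$ (each piece being a finite element of $(\h^D)_\alpha$) yields $0$. On the other hand, using $X_{j_0}=\sum_{d'\in D_{j_0}}B_{d'}(f_{d'}(X))$, the defining identity $\Delta\circ B_{d'}(y)=B_{d'}(y)\otimes 1+(\mathrm{id}\otimes B_{d'})\Delta(y)$, and the observation $\lambda\circ B_{d'}=\delta_{d',d}\,\varepsilon$ (since $B_{d'}(F)=\tdun{$d$}$ forces $d'=d$ and $F=1$), the counit axiom leaves only the $d'=d$ term, giving $f_d(X_1,\ldots,X_M)$.

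Equating the two computations yields $f_d(X_1,\ldots,X_M)=0$. Since the nonzero $X_i$ have roots in the pairwise disjoint sets $D_i$, distinct monomials $X_1^{n_1}\cdots X_M^{n_M}$ are supported on disjoint families of forests, and each is nonzero (as $\widehat{\h^D}$ is a graded integral domain); hence the substitution map $\K[[x_1,\ldots,x_M]]\to\widehat{\h^D}$ is injective, so $f_d=0$, contradicting the assumption. The step I expect to require the most care is handling the infinite series $X_{j_0}$; this will be resolved by performing every manipulation separately on each homogeneous component $X_{j_0}(\alpha)$, whose coproduct is a finite element of $\bigoplus_{\beta+\gamma=\alpha}(\h^D)_\beta\otimes(\h^D)_\gamma$.
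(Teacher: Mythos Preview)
Your proof is correct and follows essentially the same approach as the paper's: both hinge on the observation that $\tdun{$d$}$ never appears in $\h_{(S)}$, and both exploit the Hopf property by pairing $\Delta(X_{j_0})$ against a suitable linear form. The only cosmetic difference is that the paper applies its linear form on the \emph{left} tensorand (choosing $g$ with $g(f_d(X))=1$ and noting that $(g\otimes\mathrm{Id})\Delta(X_{j_0})\in\h_{(S)}$ would then contain $\tdun{$d$}$), whereas you apply the linear form dual to $\tdun{$d$}$ on the \emph{right} tensorand and read off $f_d(X)$ directly; your version makes the underlying computation $(\mathrm{Id}\otimes\lambda)\Delta(X_{j_0})=f_d(X)$ more explicit and handles the passage to the completion more carefully.
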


\begin{proof} Let $i\in [M]$, such that $d\in D_i$. As $f_d(0,\ldots,0)=0$, $\tdun{$d$}$ does not appear in $X_i$, 
and $\tdun{$d$}$ never appears in any element of $\h_{(S)}$. Let us assume that $f_d\neq 0$. As the $X_j$ are algebraically independent, 
$f_d(X_1,\ldots,X_N)\neq 0$, and there exists a linear form $g$ on $\widehat{\h^D} $, such that $g(f_d(X_1,\ldots,X_N))=1$.
Then $(g\otimes Id)\circ \Delta(X_i)$ is an element of $\h_{(S)}$, where the term $g(f_d(X_1,\ldots,X_N))\tdun{$d$}=\tdun{$d$}$ appears:
this is a contradiction. So $f_d=0$. \end{proof} \\
 
Consequently, if $\h_{(S)}$ is Hopf and $f_{d_0}(0,\ldots,0)=0$ for a certain $d_0\in D_i$, we can rewrite the $i$-th equation of $(S)$ in the following way:
$$X_i=\sum_{d\in D_i\setminus \{d_0\}} B_d(f_d(X_1,\ldots,X_M)).$$

{\bf We now assume that for all $d$, $f_d(0,\ldots,0)\neq 0$}.

\begin{lemma}
We consider the two SDSE:
\begin{align*}
(S):\forall i\in [M], \: X_i&=\sum_{d\in D_i} B_d(f_d(X_1,\ldots,X_M)),\\
(S'):\forall i\in [M], \: Y_i&=\sum_{d\in D_i} B_d\left(\frac{f_d(Y_1,\ldots,Y_M)}{f_d(0,\ldots,0)}\right).
\end{align*}
For all $d \in D$, we put $a_d=f_d(0,\ldots,0)$. Let $\phi_a$ be the Hopf algebra isomorphism defined in proposition \ref{propmorphismes}.
Then for all $i\in [M]$, $X_i=\phi_a(Y_i)$; $\h_{(S)}=\phi_a(\h_{(S')})$ and $(S)$ is Hopf, if and only if, $(S')$ is Hopf.
\end{lemma}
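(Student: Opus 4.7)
The plan is to apply the automorphism $\phi_a$ to the equations defining $(S')$ and then use the uniqueness of the solution of $(S)$ established in the earlier proposition.

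First I would recall the defining intertwining relation from proposition \ref{propmorphismes}: $\phi_a\circ B_d=a_d B_d\circ \phi_a$ for every $d\in D$. Since $\phi_a$ is homogeneous (it multiplies each forest $F$ by the scalar $\prod_{v\in V(F)} a_{d(v)}$), it is continuous for the distance $d$ on $\h^D$ and extends to an algebra automorphism of $\widehat{\h^D}$ satisfying the same intertwining relation on the completion. Applying $\phi_a$ to the equation $Y_i=\sum_{d\in D_i} B_d\bigl(f_d(Y_1,\ldots,Y_M)/a_d\bigr)$ and using this relation yields
$$\phi_a(Y_i)=\sum_{d\in D_i} a_d\,B_d\!\left(\phi_a\!\left(\frac{f_d(Y_1,\ldots,Y_M)}{a_d}\right)\right)=\sum_{d\in D_i} B_d\bigl(\phi_a(f_d(Y_1,\ldots,Y_M))\bigr).$$
Because $\phi_a$ is an algebra morphism, $\phi_a(f_d(Y_1,\ldots,Y_M))=f_d(\phi_a(Y_1),\ldots,\phi_a(Y_M))$, so $(\phi_a(Y_1),\ldots,\phi_a(Y_M))$ satisfies $(S)$. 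By the uniqueness of the solution of $(S)$, $X_i=\phi_a(Y_i)$ for all $i\in[M]$.

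For the equality $\h_{(S)}=\phi_a(\h_{(S')})$, I would observe that $\phi_a$ multiplies each forest by a scalar, hence is homogeneous for every $\N^N$-graduation induced by a map $deg:D\longrightarrow \N^N$; thus $\phi_a(Y_i(\alpha))=X_i(\alpha)$ for all homogeneous components. Since $\phi_a$ is an algebra morphism, it sends the subalgebra generated by the $Y_i(\alpha)$'s to the subalgebra generated by the $X_i(\alpha)$'s, giving the desired equality. Finally, $\phi_a$ being a Hopf algebra automorphism of $\h^D$ sends Hopf subalgebras to Hopf subalgebras (and reflects them via $\phi_a^{-1}=\phi_{a^{-1}}$), so $\h_{(S')}$ is a Hopf subalgebra iff $\h_{(S)}=\phi_a(\h_{(S')})$ is.

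No step looks delicate: the only point worth checking carefully is that $\phi_a$ really extends continuously to $\widehat{\h^D}$ in a way compatible with inserting formal series into $f_d$, which is immediate because $\phi_a$ preserves the vertex-number graduation used to define the completion. The scalar-rescaling structure of $\phi_a$ makes all other verifications automatic.
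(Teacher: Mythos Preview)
Your proof is correct and follows essentially the same route as the paper: apply the intertwining relation $\phi_a\circ B_d=a_dB_d\circ\phi_a$ to the system $(S')$, use that $\phi_a$ is a continuous algebra morphism to pass it through $f_d$, and conclude by uniqueness of the solution of $(S)$. You are in fact more complete than the paper, which only writes out the verification of $X_i=\phi_a(Y_i)$ and leaves the remaining claims (homogeneity of $\phi_a$ for the $\N^N$-graduation, hence $\h_{(S)}=\phi_a(\h_{(S')})$, and preservation of Hopf subalgebras) implicit.
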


\begin{proof} We put:
$$g_d(x_1,\ldots,x_M)=\frac{f_d(x_1,\ldots,x_M)}{f_d(0,\ldots,0)}.$$
As $\phi_a \circ B_d=f_d(0,\ldots,0) B_d\circ \phi_a$ for all $d$, we obtain:
\begin{align*}
\phi_a(Y_i)&=\sum_{d\in D_i} \phi_a\circ B_d(g_d(Y_1,\ldots,Y_M))\\
&=\sum_{d\in D_i} f_d(0,\ldots,0)B_d\circ \phi_a(g_d(Y_1,\ldots,Y_M))\\
&=\sum_{d\in D_i} f_d(0,\ldots,0)B_d( g_d(\phi_a(Y_1),\ldots,\phi_a(Y_M)))\\
&=\sum_{d\in D_i} B_d( f_d(\phi_a(Y_1),\ldots,\phi_a(Y_M))).
\end{align*}
So $(\phi_a(Y_1),\ldots,\phi_a(Y_M))$ is the unique solution of $(S)$. \end{proof}\\

{\bf  We now assume that $f_d(0,\ldots,0)=1$ for all $d\in D$.}

\begin{lemma}\label{lemmaseriesidentiques}
Let $(S)$ be a Hopf SDSE, $d_1,d_2$ be two elements in the same $D_i$, of the same degree. Then $f_{d_1}=f_{d_2}$.
\end{lemma}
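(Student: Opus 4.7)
The plan is to exploit the Hopf hypothesis by comparing, on the subalgebra $\h_{(S)}$, the two linear forms on $\h^D$ that extract the coefficients of the single-vertex trees $\tdun{$d_1$}$ and $\tdun{$d_2$}$ (call them $\ell_1,\ell_2$), and then to evaluate $(Id\otimes \ell_j)\Delta(X_i)$ in two different ways.

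Write $\alpha=deg(d_1)=deg(d_2)$. First I would check that $\ell_1-\ell_2$ vanishes on $\h_{(S)}$. Since $\tdun{$d_j$}$ is a single tree, it cannot occur in any forest which is a product of two or more trees, so $\ell_j$ vanishes on every product of at least two generators $X_k(\beta)$. On a single generator $X_k(\beta)$, the tree $\tdun{$d_j$}$ can contribute only if its degree $\alpha$ equals $\beta$ and its root decoration $d_j$ lies in $D_k$; because $d_j\in D_i$, this forces $(k,\beta)=(i,\alpha)$, and in that case the coefficient is $f_{d_j}(0,\ldots,0)=1$, the same for $j=1$ and $j=2$. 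Hence $\ell_1$ and $\ell_2$ take equal values on every monomial in the generators of $\h_{(S)}$, so $\ell_1=\ell_2$ on $\h_{(S)}$.

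Next, because $(S)$ is Hopf, each homogeneous component of $\Delta(X_i)$ lies in $\h_{(S)}\otimes \h_{(S)}$, so $(Id\otimes(\ell_1-\ell_2))\Delta(X_i)=0$. I would then compute $(Id\otimes\ell_j)\Delta(X_i)$ directly: starting from $X_i=\sum_{d\in D_i}B_d(f_d(X))$ and applying $\Delta\circ B_d=B_d\otimes 1+(Id\otimes B_d)\circ\Delta$, the elementary identity $\ell_j\circ B_d=\delta_{d,d_j}\varepsilon$ collapses the sum to the single term $d=d_j$, yielding $(Id\otimes\ell_j)\Delta(X_i)=f_{d_j}(X_1,\ldots,X_M)$ (valid since $d_j\in D_i$). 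Subtracting the two expressions forces $f_{d_1}(X)=f_{d_2}(X)$ in $\widehat{\h^D}$, and the algebraic independence of $X_1,\ldots,X_M$, already noted above, upgrades this to the desired equality $f_{d_1}=f_{d_2}$ in $\K\langle\langle x_1,\ldots,x_M\rangle\rangle$.

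The only delicate point is the first step: ruling out any stray appearance of $\tdun{$d_j$}$ among the monomials in the generators $X_k(\beta)$. It is really a matter of bookkeeping with degrees and root decorations, but it is precisely where the hypothesis $d_1,d_2\in D_i$ with equal degree is actually used; once it is settled, the Hopf property together with the simple identity $\ell_j\circ B_d=\delta_{d,d_j}\varepsilon$ close the argument.
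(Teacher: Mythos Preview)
Your proof is correct and follows essentially the same approach as the paper's: define the two linear forms picking out the coefficients of $\tdun{$d_1$}$ and $\tdun{$d_2$}$, show they agree on $\h_{(S)}$, and then use the Hopf property together with the cocycle identity for $B_d$ to obtain $f_{d_1}(X)=f_{d_2}(X)$. The paper's version is simply terser---it asserts without comment that the two forms agree on $\h_{(S)}$ and that $(Id\otimes f_j)\Delta(X_i)=f_{d_j}(X_1,\ldots,X_M)$---whereas you spell out both of these steps explicitly (via the root-decoration/degree bookkeeping and the identity $\ell_j\circ B_d=\delta_{d,d_j}\varepsilon$), which is a welcome clarification rather than a different route.
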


\begin{proof} Let us denote by $\alpha$ the common degree of $d_1$ and $d_2$. The homogeneous component of $X_i$ of degree $\alpha$
has the form $\tdun{$d_1$}\:+\tdun{$d_2$}\:+\ldots$; consequently, if we consider the linear forms:
\begin{align*}
f_1:&\left\{\begin{array}{rcl}
\h^D&\longrightarrow&\K\\
F\in \F^D&\longrightarrow&\delta_{F,\tdun{$d_1$}\:},
\end{array}\right.&
f_2:&\left\{\begin{array}{rcl}
\h^D&\longrightarrow&\K\\
F\in \F^D&\longrightarrow&\delta_{F,\tdun{$d_2$}\:},
\end{array}\right.&
\end{align*}
then the restriction of $f_1$ and $f_2$ to $\h_{(S)}$ are equal. As $\h_{(S)}$ is Hopf:
\begin{align*}
f_{d_1}(X_1,\ldots,X_M)=(Id \otimes f_1)\circ \Delta(X_i)&=(Id \otimes f_2)\circ \Delta(X_i)=f_{d_2}(X_1,\ldots,X_M).
\end{align*}
So $f_{d_1}=f_{d_2}$. \end{proof}\\

Note that, if the SDSE is Hopf, we can write it under the form:
$$\forall i\in [M],\: X_i=\sum_{\alpha\in \N^N_*}\underbrace{\left(\sum_{i\in D_i, deg(i)=\alpha} B_i\right)}_{=B_{i,\alpha}}(f_\alpha(X_1,\ldots,X_M))
=\sum_{\alpha\in \N^N_*} B_{i,\alpha}(f_\alpha(X_1,\ldots,X_M)).$$

\subsection{Operations on SDSE}

\begin{defi}
Let $D=D_1\sqcup \ldots \sqcup D_M$ be a $\N^N$-graded connected partitioned set. We consider the SDSE given by:
$$(S):\:\forall i\in [M], \:X_i=\sum_{d\in D_i} B_d(f_d(X_1,\ldots,X_M)).$$
\begin{enumerate}
\item{\bf (Change of variables)}
Let $a=(a_1,\ldots,a_M)$ be a family of  nonzero scalars. The SDSE obtained from $(S)$ by the change of variables associated to these coefficients is:
$$(S)_a:\:\forall i\in [M], \:Y_i=\sum_{d\in D_i} B_d(f_d(a_1Y_1,\ldots,a_MY_M)).$$
\item{\bf (Restriction)} 
Let $I\subseteq [M]$. The restriction of $(S)$ to $I$ is the SDSE  given by:
$$(S)_{\mid I}:\:\forall i\in I, \:X_i=\sum_{d\in D_i} B_d(g_d(X_j,j\in I)),$$
where for all $d\in I$, $g_d={f_d}_{\mid x_j=0\: \mbox{\scriptsize for all } j\notin I}\in \K[[X_j,j\in I]]$.
\end{enumerate}\end{defi}

\begin{prop}\label{proprestriction}\begin{enumerate}
\item Let $(S)$ be a SDSE and let $(S)_a$ be another SDSE, obtained from $(S)$ by a change of variables. 
We define the coefficients $a_d$, $d\in D$, by:
$$a_d=a_i \mbox{ if }d\in D_i.$$
Let $\phi_a$ be the Hopf algebra isomorphism defined in proposition \ref{propmorphismes}. The unique solution of $(S)_a$ is:
$$\left(\frac{1}{a_1}\phi_a(X_1),\ldots,\frac{1}{a_M}\phi_a(X_M)\right).$$
Hence, $\h_{(S)_a}=\phi_a(\h_{(S)})$ and $(S)$ is Hopf if, and only if $(S)_a$ is Hopf.
\item Let $I \subseteq M$. We define the coefficients $a_d$, $d\in D$, by:
$$a_d=\begin{cases}
\displaystyle 1\mbox{ if }d\in \bigsqcup_{i\in I} D_i,\\
0\mbox{ otherwise}.
\end{cases}$$
Let $\phi_a$ be the Hopf algebra morphism defined in proposition \ref{propmorphismes}. The unique solution of $(S)_{\mid I}$ is:
$$\left(\phi_a(X_i)\right)_{i\in I}.$$
Hence, $\h_{(S)_{\mid I}}=\phi_a(\h_{(S)})$ and, if $(S)$ is a Hopf SDSE, then $(S)_{\mid I}$ is also a Hopf SDSE.
\end{enumerate}\end{prop}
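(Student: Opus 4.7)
The plan is to transport the defining equations of $(S)$ through the Hopf algebra morphism $\phi_a$ of Proposition \ref{propmorphismes} and to identify the resulting system as $(S)_a$ (resp.\ $(S)_{\mid I}$). The uniqueness of the solution, already established via the contracting-map argument for $\Theta$, will then pin down the image as \emph{the} solution of the new system. The only non-trivial input is the intertwining relation $\phi_a \circ B_d = a_d\, B_d \circ \phi_a$, together with the fact that $\phi_a$ is homogeneous, acting by the scalar $\prod_{v \in V(F)} a_{d(v)}$ on every forest $F$.

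For Part 1, with all $a_i$ nonzero, I would set $Y_i := a_i^{-1}\phi_a(X_i)$ and check directly that $(Y_1,\ldots,Y_M)$ satisfies $(S)_a$. Since $a_j Y_j = \phi_a(X_j)$ for all $j$ and $\phi_a$ is an algebra morphism, one gets $f_d(a_1 Y_1,\ldots,a_M Y_M) = \phi_a(f_d(X_1,\ldots,X_M))$; applying $B_d$, using the intertwining relation inverted as $B_d \circ \phi_a = a_d^{-1}\phi_a \circ B_d$ (legitimate because $a_d = a_i \neq 0$ on $D_i$), and summing over $d \in D_i$, the right-hand side collapses to $a_i^{-1}\phi_a(X_i) = Y_i$ since $X$ solves $(S)$. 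Uniqueness then identifies $(Y_i)$ as the solution of $(S)_a$. Homogeneity of $\phi_a$ gives $Y_i(\alpha) = a_i^{-1}\phi_a(X_i(\alpha))$, so that $\h_{(S)_a} = \phi_a(\h_{(S)})$; since every $a_d$ is nonzero, $\phi_a$ is a Hopf algebra automorphism by Proposition \ref{propmorphismes} and the Hopf property transfers.

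For Part 2, the morphism $\phi_a$ now annihilates any forest bearing a vertex decorated outside $\bigsqcup_{i \in I} D_i$. I would apply $\phi_a$ to each equation of $(S)$ and split into two cases. If $i \notin I$, every term vanishes since $\phi_a \circ B_d = a_d B_d \circ \phi_a = 0$ on $D_i$, so $\phi_a(X_i) = 0$. If $i \in I$, then $a_d = 1$ on $D_i$ and, $\phi_a$ being an algebra morphism, the vanishing of $\phi_a(X_j)$ for $j \notin I$ precisely substitutes $0$ for those variables in $f_d$, producing the series $g_d$; hence $(\phi_a(X_i))_{i \in I}$ solves $(S)_{\mid I}$ and, by uniqueness, is its solution. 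Homogeneity of $\phi_a$ yields $\h_{(S)_{\mid I}} = \phi_a(\h_{(S)})$, and since the image of a Hopf subalgebra under a Hopf algebra morphism is a Hopf subalgebra, the Hopf property descends. The main (mild) obstacle throughout is simply bookkeeping: keeping track of which scalars $a_d$ appear where, and verifying that setting $\phi_a(X_j) = 0$ for $j \notin I$ really collapses $f_d$ to $g_d$ in the indexing convention of the statement.
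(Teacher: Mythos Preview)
Your proof is correct and follows essentially the same route as the paper: define $Y_i=a_i^{-1}\phi_a(X_i)$, push the defining equations of $(S)$ through $\phi_a$ via the intertwining relation $\phi_a\circ B_d=a_d\,B_d\circ\phi_a$, and invoke uniqueness of solutions. You actually spell out Part~2 in more detail than the paper, which dispatches it with ``proved in a similar way, noting that $\phi_a(X_i)=Y_i$ if $i\in I$ and $0$ otherwise.''
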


\begin{proof} 1. For all $i \in [M]$, we put $Y_i=\frac{1}{a_i}\phi_a(X_i)$. Then:
\begin{align*}
Y_i&=\frac{1}{a_i}\sum_{d\in D_i} \phi_a\circ B_d(f_d(X_1,\ldots,X_M))\\
&=\sum_{d\in D_i} B_d \circ \phi_a(f_d(X_1,\ldots,X_M))\\
&=\sum_{d\in D_i} B_d (f_d(\phi_a(X_1),\ldots,\phi_a(X_M)))\\
&=\sum_{d\in D_i} B_d (f_d(a_1Y_1,\ldots,a_NY_M)).
\end{align*}
So $Y=(Y_1,\ldots,Y_M)$ is the solution of $(S)_a$. \\

2. Proved in a similar way, noting that $\phi_a(X_i)=Y_i$ if i $\in I$ and $0$ otherwise. \end{proof}

\begin{defi}[Concatenation]
Let $(S)$ and $(S')$ be two SDSE, respectively associated to partitioned $\N^N$-graded sets $D=D_1\sqcup \ldots \sqcup D_M$
and $D'=D'_1\sqcup \ldots \sqcup D'_{M'}$, and to formal series $(f_d)_{d\in [M]}$ and $(f'_d)_{d\in [M']}$. The concatenation of $(S)$ and $(S')$
is the system associated to the $\N^N$-graded partitioned set 
$D\sqcup D'=D_1\sqcup \ldots \sqcup D_M\sqcup D'_1\sqcup \ldots \sqcup D'_{M'}$ given by:
$$(S) \sqcup (S'):\left\{\begin{array}{rl}
\mbox{ if } 1\leq i \leq M,\:, X_i&\displaystyle=\sum_{d\in D_i} B_d(f_d(X_1,\ldots,X_M)),\\[5mm]
\mbox{ if } M+1 \leq i \leq M+M',\:X_i&\displaystyle=\sum_{d\in D'_{i-M}} B_d(f'_d(X_{M+1},\ldots,X_{M+M'})).
\end{array}\right.$$
\end{defi}

\begin{prop}
Let $(S)$ and $(S')$ be two SDSE. Then $(S)\sqcup (S')$ is Hopf if, and only if, $(S)$ and $(S')$ are Hopf.
\end{prop}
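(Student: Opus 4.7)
The plan is to exploit the fact that the concatenated system decouples. Because, in $(S)\sqcup (S')$, the $i$-th equation for $i\leq M$ involves $B_d$ only for $d\in D_i\subseteq D$ and series $f_d$ depending only on $x_1,\ldots,x_M$, and symmetrically for $i>M$, the tuple $(X_1,\ldots,X_M,X'_1,\ldots,X'_{M'})$, obtained by juxtaposing the solutions of $(S)$ and $(S')$, satisfies the full system. By uniqueness it \emph{is} the solution of $(S)\sqcup (S')$. Consequently each $X_i$ is a formal sum of forests decorated by $D$, each $X'_j$ by $D'$, and $\h_{(S)\sqcup (S')}$ is the subalgebra of $\h^{D\sqcup D'}$ generated by the union of the generators of $\h_{(S)}\subseteq \h^D$ and of $\h_{(S')}\subseteq \h^{D'}$.

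For the forward implication, assume $(S)$ and $(S')$ are both Hopf. The point is that $\h^D$ and $\h^{D'}$ sit inside $\h^{D\sqcup D'}$ as Hopf subalgebras: admissible cuts of a forest decorated only by $D$ produce forests decorated only by $D$, and similarly for $D'$. Therefore, for any homogeneous component $X_i(\alpha)$ with $i\leq M$ one has $\Delta(X_i(\alpha))\in \h_{(S)}\otimes \h_{(S)}\subseteq \h_{(S)\sqcup(S')}\otimes \h_{(S)\sqcup(S')}$, and analogously for the generators $X'_j(\alpha)$ with $j\leq M'$. Hence $\h_{(S)\sqcup (S')}$ is stable under $\Delta$, so is Hopf.

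For the converse, I would invoke Proposition \ref{proprestriction}(2). Taking $I=[M]$, the restriction $((S)\sqcup (S'))_{\mid I}$ is obtained by setting $x_j=0$ for $j>M$ in the series $f_d$ attached to the first $M$ equations; but those series already involve only $x_1,\ldots,x_M$, so the substitution is trivial and $((S)\sqcup (S'))_{\mid I}=(S)$. By Proposition \ref{proprestriction}(2), $(S)$ is Hopf. Taking $I=\{M+1,\ldots,M+M'\}$ yields in the same way that $(S')$ is Hopf.

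The only step that needs any care is the decoupling observation at the start (together with the remark that the Connes-Kreimer coproduct respects the partition of decorations, which in turn ensures that the restriction morphism $\phi_a$ of Proposition \ref{proprestriction}(2) sends the full solution to the corresponding partial solution); once these are in place, both implications are immediate and no further computation is required.
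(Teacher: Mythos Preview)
Your argument is correct and follows essentially the same approach as the paper: the restriction via Proposition~\ref{proprestriction}(2) for one direction, and the observation that $\h_{(S)\sqcup(S')}$ is generated by $\h_{(S)}$ and $\h_{(S')}$ sitting inside the Hopf subalgebras $\h^D,\h^{D'}\subseteq\h^{D\sqcup D'}$ for the other. The paper phrases the latter as $\h_{(S)\sqcup(S')}\cong\h_{(S)}\otimes\h_{(S')}$, which is exactly your decoupling observation together with the stability of each factor under $\Delta$; one small remark: what you label the ``forward implication'' is actually the $\Longleftarrow$ direction of the stated biconditional, but the content is unaffected.
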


\begin{proof}
$\Longrightarrow$. Let us assume that $(S) \sqcup (S')$ is Hopf. Then $(S)\sqcup (S')_{\mid [M]}=(S)$ and, up to a reindexation,
$(S) \sqcup (S')_{\mid [M+M']\setminus [M]}=(S')$. By proposition \ref{proprestriction}, $(S)$ and $(S')$ are Hopf. \\

$\Longleftarrow$. Let us assume that $(S)$ and $(S')$ are Hopf. Then $\h_{(S) \sqcup (S')}$ is isomorphic to $\h_{(S)}\otimes \h_{(S')}
\subseteq \h^D \otimes \h^{D'} \subseteq \h^{D\sqcup D'}$. As $\h_{(S)}$ and $\h_{(S')}$ are Hopf subalgebras of $\h^D$ and $\h^{D'}$,
$\h_{(S)}\otimes \h_{(S')}$ is a Hopf subalgebra of $\h^{D\sqcup D'}$, so $(S) \sqcup (S')$ is Hopf. \end{proof}\\

{\bf Remark.} As in \cite{Foissy3}, it is possible to define an operation of dilatation for multigraded SDSE.
We will not use it here.

\subsection{Changes of graduation}

\label{sectgrad}

Let $D$ be a $\N^N$-graded connected set. Let $C\in \mathcal{M}_{N',N}(\mathbb{Q})$. We assume the following hypothesis: 
if $\alpha \in \N^N$ satisfies $D_\alpha \neq(0)$, then $C\alpha \in \N^{N'}_*$. We give $D$ a $\N^{N'}$-graduation by:
$$D'_\beta=\bigsqcup_{\alpha \in \N^N, C\alpha=\beta} D_\alpha.$$
This defines another connected graduation of $D$. Consequently, $\h^D$ inherits a second graduation:
$$\h^D_{(\beta)'}=\bigoplus_{\alpha,C\alpha=\beta} \h_{(\alpha)}^D.$$
Let $(S)$ be a SDSE on $D$. The solution $X$ of $(S)$ can be decomposed into two ways:
$$X_i=\sum_{\alpha \in \N^N} X_i(\alpha)=\sum_{\beta\in \N^{N'}} X_i'(\beta).$$
Hence, we obtain two subalgebras, denoted by $\h_{(S)}$ and $\h'_{(S)}$. 

\begin{lemma} \label{lemmagrad}
Under the preceding hypotheses:
\begin{enumerate}
\item $\h_{(S)}'\subseteq \h_{(S)}$; if $Ker(C)=(0)$, this is an equality.
\item If $\h_{(S)}'$ is Hopf, then $\h_{(S)}$ is Hopf.
\end{enumerate}
\end{lemma}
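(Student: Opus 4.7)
The plan is elementary linear algebra paired with the observation that the $\N^N$-grading on $\h^D$ is inherited by any subalgebra generated by homogeneous elements.

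\textbf{Part 1.} Since every tree appearing in $X_i(\alpha)$ is homogeneous of $\N^N$-degree $\alpha$, it has $\N^{N'}$-degree $C\alpha$; hence
$$X_i'(\beta)=\sum_{\alpha\in\N^N,\: C\alpha=\beta}X_i(\alpha),$$
which immediately lies in $\h_{(S)}$, giving $\h_{(S)}'\subseteq\h_{(S)}$. If $\ker(C)=(0)$, then for each $\alpha$ occurring in the sum decomposition of some $X_i$, $\beta:=C\alpha$ uniquely determines $\alpha$, so the sum above has a single term and $X_i(\alpha)=X_i'(C\alpha)\in\h_{(S)}'$. Thus $\h_{(S)}\subseteq\h_{(S)}'$, giving equality.

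\textbf{Part 2.} I want to show that the algebra $\h_{(S)}$, which is generated by the homogeneous components $X_i(\alpha)$, is stable under the coproduct $\Delta$. Since $\Delta$ is an algebra morphism, it suffices to prove $\Delta(X_i(\alpha))\in\h_{(S)}\otimes\h_{(S)}$ for every $i$ and $\alpha$. The key point is that $\h_{(S)}$, being generated by $\N^N$-homogeneous elements, is itself an $\N^N$-graded subalgebra of $\h^D$; therefore $\h_{(S)}\otimes\h_{(S)}$ is $\N^N$-graded, and extracting the $\N^N$-homogeneous component of degree $\alpha$ of any element of $\h_{(S)}\otimes\h_{(S)}$ stays inside $\h_{(S)}\otimes\h_{(S)}$.

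Now fix $i\in[M]$ and $\alpha\in\N^N$ with $X_i(\alpha)\neq 0$, and set $\beta=C\alpha$. By the hypothesis that $\h_{(S)}'$ is Hopf,
$$\Delta(X_i'(\beta))\in\h_{(S)}'\otimes\h_{(S)}'\subseteq\h_{(S)}\otimes\h_{(S)}.$$
The map $\Delta$ is $\N^N$-homogeneous of degree $0$, and the $\N^N$-homogeneous decomposition of $X_i'(\beta)=\sum_{C\alpha'=\beta}X_i(\alpha')$ consists of the various $X_i(\alpha')$. Extracting the $\N^N$-component of bidegree summing to $\alpha$ yields $\Delta(X_i(\alpha))\in\h_{(S)}\otimes\h_{(S)}$. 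This gives the required Hopf stability.

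The only potentially delicate point is checking that the $\N^N$-grading of $\h_{(S)}$ is well-defined (i.e.\ compatible with the grading of $\h^D$), but this is immediate because each generator $X_i(\alpha)$ is $\N^N$-homogeneous by definition. No antipode argument is needed since $\h^D$ is graded connected, so any subbialgebra automatically inherits an antipode.
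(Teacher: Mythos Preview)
Your proof is correct and follows essentially the same approach as the paper. Part 1 is identical; for Part 2, you phrase the argument in terms of $\h_{(S)}$ being an $\N^N$-graded subalgebra (so that taking homogeneous components in $\h_{(S)}\otimes\h_{(S)}$ stays inside), whereas the paper works explicitly with the projections $\pi_\alpha$ and verifies $\pi_\alpha(\h_{(S)}')\subseteq\h_{(S)}$ via the multiplicative formula $\pi_\alpha(xy)=\sum_{\alpha'+\alpha''=\alpha}\pi_{\alpha'}(x)\pi_{\alpha''}(y)$ --- but this is exactly the same idea unpacked.
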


\begin{proof} Let $\beta \in \N^{N'}$. Then:
$$X_i'(\beta)=\sum_{C\alpha=\beta} X_i(\alpha).$$
Hence, $\h_{(S)}'\subseteq \h_{(S)}$. Let us assume that $Ker(C)=(0)$. Let $\alpha\in \N^N$. We put $\beta=C\alpha$. As $C$ is injective,
$X_i'(\beta)=X_i(\alpha)$, so $X_i(\alpha)\in \h_{(S)}'$, and finally $\h_{(S)}=\h_{(S)}'$. \\

Let us assume that $\h_{(S)}'$ is Hopf. We denote by $\pi_\alpha$ the canonical projection on $\h^D(\alpha)$.
For all $\beta\in \N^{N'}$:
$$\pi_\alpha(X'_i(\beta))=\begin{cases}
X_i(\alpha)\mbox{ if }C\beta=\alpha,\\
0\mbox{ otherwise}. 
\end{cases}$$
Moreover, for all $x,y\in \h^D$:
$$\pi_\alpha(xy)=\sum_{\alpha'+\alpha''=\alpha}\pi_{\alpha'}(x)\pi_{\alpha''}(y).$$
This implies that for all $\alpha \in \N^N_*,$, $\pi_\alpha\left(\h_{(S)}'\right)\subseteq \h_{(S)}$. For $\beta=C\alpha$:
\begin{align*}
\Delta(X_i(\alpha))&=\Delta\circ \pi_\alpha(X_i'(\beta))\\
&=\sum_{\alpha'+\alpha''=\alpha} (\pi_{\alpha'} \otimes \pi_{\alpha''})\circ \Delta(X_i(\beta))\\
&\in \sum_{\alpha'+\alpha''=\alpha}\pi_{\alpha'}\left(\h'_{(S)}\right) \otimes \pi_{\alpha''}\left(\h'_{(S)}\right)\\
&\in \h_{(S)}\otimes \h_{(S)}.
\end{align*}
So $\h_{(S)}$ is a Hopf subalgebra of $\h^D$. \end{proof} \\

We shall often restrict ourselves to matrices $C$ whose rank is $N'$. 
One natural question is to find the smallest $N$ such that there exists a $\N^{N}$-graduation making the studied SDSE Hopf. \\

\section{A family of pre-Lie algebras}

If $(S)$ is a Hopf SDSE, as $X_i$ is an infinite span of trees with roots decorated by $D_i$.
Moreover, in $\h_{(S)}$, any linear span of rooted trees with roots decorated by $D_i$ is a linear span of $X_i(\alpha)$; 
hence, we can write the coproduct of $X_i$ under the form:
$$\Delta(X_i)=X_i\otimes 1+\sum_{\alpha \in \NN} P_{i,\alpha}(X_1,\ldots,X_n)\otimes X_i(\alpha).$$
So $\h_{(S)}$ is a commutative combinatorial Hopf algebra in the sense of \cite{Loday}. 
Hence, its dual is the enveloping of algebra of a pre-Lie algebra $\g_{(S)}$. It is generated by the elements $f_i(\alpha)$, dual to the nonzero $X_i(\alpha)$;
for all $i,j \in [M]$, for all $\alpha,\beta \in \N^N_*$, there exists a scalar $\lambda_{i,j}(\alpha,\beta)$, such that:
$$f_j(\beta)*f_i(\alpha)=\lambda_{i,j}(\alpha,\beta) f_i(\alpha+\beta),$$
where $*$ is the pre-Lie product of $\g_{(S)}$. 
When $N=1$, if the system is fundamental, we proved in \cite{Foissy6}  that these coefficients are polynomial of degree $\leq 1$. 
We here generalize this case for any $N$.

\subsection{Definition and examples}

\begin{defi}
Let $(\g,*)$ be a pre-Lie algebra. We shall say that it is deg1 if there exists a basis $(f_i(\alpha))_{i\in [M], \alpha \in \NN}$ of $\g$, and
$A^{(i,j)}\in \K^N$, $b^{(i,j)}\in \K$, such that for all $i,j \in [M]$, $\alpha,\beta \in \NN$:
$$f_j(\beta)*f_i(\alpha)=(A^{(i,j)}\cdot \alpha+b^{(i,j)})f_i(\alpha+\beta),$$
where we denote by $\cdot$ the usual inner product of $\K^N$.  The elements $A^{(i,j)}$ and $b^{(i,j)}$ will be called the structure coefficients of $\g$.
\end{defi}

{\bf Example.} We take $M=N$. The pre-Lie product of the $N$-dimensional  Faà di Bruno Lie algebra is given by:
$$f_j(\beta)*f_i(\alpha)=(\alpha_j+\delta_{i,j})f_i(\alpha+\beta).$$
Here, $A^{(i,j)}=\epsilon_j$, and $b^{(i,j)}=\delta_{i,j}$.\\

Let $(\g,*)$ be a deg1 pre-Lie algebra of structure coefficients $A^{(i,j)}$ and $b^{(i,j)}$. 
Let $\lambda_i\in \K-\{0\}$ for all $i\in [M]$. We put $g_i(\alpha)=\lambda_i f_i(\alpha)$ for all $i\in [M]$, $\alpha\in \NN$. Then:
$$g_j(\beta)*g_i(\alpha)=(\lambda_j A^{(i,j)}\cdot \alpha+\lambda_j b^{(i,j)})g_i(\alpha+\beta).$$
So the deg1 pre-Lie algebra with structure coefficients $A^{(i,j)}$ and $b^{(i,j)}$ is isomorphic to the deg1 pre-Lie algebra
with structure coefficients $\lambda_j A^{(i,j)}$ and $\lambda_j b^{(i,j)}$: we shall say that these two pre-Lie algebras are equivalent.
Our aim in this section  is to find all deg1 pre-Lie algebras, up to equivalence.

\begin{lemma}
Let $\g$ be a vector space with a basis $(f_i(\alpha))_{i\in [M], \alpha \in \NN}$, elements $A^{(i,j)}\in \K^N$, $b^{(i,j)}\in \K$, for $i,j \in [M]$.
We define a product $*$ on $\g$ by:
$$f_j(\beta)*f_i(\alpha)=(A^{(i,j)}\cdot \alpha+b^{(i,j)})f_i(\alpha+\beta).$$
Then $(\g,*)$ is a pre-Lie algebra if, and only if, for all $i,j,k\in [M]$:
\begin{align}
\label{C1} &(A^{(i,j)}=0\mbox{ and }b^{(i,j)}=0)\mbox{ or }(A^{(i,j)}=A^{(i,k)}),\\
\label{C2} &A^{(i,j)}b^{(j,k)}=A^{(i,k)}b^{(k,j)},\\
\label{C3} &b^{(i,j)}b^{(j,k)}=b^{(i,k)}b^{(k,j)}.
\end{align}\end{lemma}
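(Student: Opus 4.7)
I would substitute basis elements directly into the pre-Lie axiom and extract the three stated conditions by comparing coefficients in the resulting polynomial identity.

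First, I evaluate $(x*y)*z - x*(y*z) = (y*x)*z - y*(x*z)$ on the triple $x=f_k(\gamma)$, $y=f_j(\beta)$, $z=f_i(\alpha)$. Since every product of two basis elements is a scalar multiple of a single basis element, each of the four iterated products lies in $\K \cdot f_i(\alpha+\beta+\gamma)$. Writing $P = A^{(i,j)}\cdot\alpha + b^{(i,j)}$ and $Q = A^{(i,k)}\cdot\alpha + b^{(i,k)}$, a direct unfolding yields
$$(x*y)*z - x*(y*z) = P\bigl[(A^{(j,k)}-A^{(i,k)})\cdot\beta + b^{(j,k)} - Q\bigr]\, f_i(\alpha+\beta+\gamma),$$
with the analogous formula for $(y*x)*z - y*(x*z)$ obtained by swapping $(j,\beta)$ and $(k,\gamma)$. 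Equating the two expressions cancels the common term $PQ$ and reduces the pre-Lie axiom to the single identity
$$P(A^{(j,k)}-A^{(i,k)})\cdot\beta + Pb^{(j,k)} \;=\; Q(A^{(k,j)}-A^{(i,j)})\cdot\gamma + Qb^{(k,j)},$$
which must hold for all $\alpha,\beta,\gamma \in \NN$.

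Since $\NN$ is Zariski-dense in $\K^N$, this equality is equivalent to the corresponding identity of polynomials in $\alpha,\beta,\gamma$. Comparing the coefficient of $\beta$ (which appears only on the left-hand side, linearly) gives $P(A^{(j,k)}-A^{(i,k)})=0$ as a polynomial in $\alpha$, forcing either $P\equiv 0$ (that is, $A^{(i,j)}=0$ and $b^{(i,j)}=0$) or $A^{(j,k)}=A^{(i,k)}$; this is condition \eqref{C1}. The analogous comparison for $\gamma$ is the same statement after relabeling $j\leftrightarrow k$ in the universal quantifier, and so yields no new information. Once \eqref{C1} is in force, the $\beta$- and $\gamma$-dependent terms vanish and the remaining identity $Pb^{(j,k)} = Qb^{(k,j)}$ is polynomial in $\alpha$: matching the linear part in $\alpha$ yields \eqref{C2}, and matching the constant part yields \eqref{C3}.

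For the converse, each coefficient comparison above is reversible, so if \eqref{C1}--\eqref{C3} all hold then the reduced polynomial identity holds, and hence so does the pre-Lie axiom on every triple of basis elements; trilinearity extends this to all of $\g$. The main obstacle is the bookkeeping in the first step — correctly tracking which index plays the role of ``target'' and which of ``graftee'' through three successive products — but once the four iterated products are written down correctly the rest of the argument is a short polynomial-matching exercise.
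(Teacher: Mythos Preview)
Your proposal is correct and follows essentially the same route as the paper: compute the two associators on basis elements, obtain a scalar identity that must hold for all $\alpha,\beta,\gamma\in\NN$, and then separate it into its $\beta$-linear, $\gamma$-linear, $\alpha$-linear, and constant parts to read off the three conditions. Your use of the shorthand $P,Q$ and the cancellation of the common $PQ$ term is exactly the paper's computation written more compactly; the only cosmetic difference is that the condition you extract from the $\beta$-coefficient reads $A^{(j,k)}=A^{(i,k)}$ rather than the form $A^{(i,j)}=A^{(i,k)}$ displayed in \eqref{C1}, but the paper makes the same identification at the end of its own proof.
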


\begin{proof}
Let $\alpha,\beta,\gamma\in \NN$, $i,j,k\in [M]$. Then:
\begin{align*}
&(f_k(\gamma)*f_j(\beta))*f_i(\alpha)-f_k(\gamma)*(f_j(\beta)*f_i(\alpha))\\
&=(A^{(i,j)}\cdot \alpha+b^{(i,j)})(A^{(j,k)}\cdot \beta+b^{(j,k)})f_i(\alpha+\beta+\gamma)\\
&-(A^{(i,j)}\cdot \alpha+b^{(i,j)})(A^{(i,k)}\cdot(\alpha+\beta)+b^{(i,k)})f_i(\alpha+\beta+\gamma)\\
&=(A^{(i,j)}\cdot \alpha+b^{(i,j)})((A^{(j,k)}-A^{(i,k)})\cdot\beta-A^{(i,k)}\cdot \alpha+b^{(j,k)}-b^{(i,k)})f_i(\alpha+\beta+\gamma).
\end{align*}
Consequently:
\begin{align*}
&\mbox{$(\g,*)$ is pre-Lie}\\
&\Longleftrightarrow \forall i,j,k \in [M], \forall \alpha \in \NN,\\
&\begin{cases}
(A^{(i,j)}\cdot \alpha+b^{(i,j)})(A^{(j,k)}-A^{(i,k)})=0,\\
(A^{(i,k)}\cdot \alpha+b^{(i,k)})(A^{(k,j)}-A^{(i,j)})=0,\\
(A^{(i,j)}\cdot \alpha+b^{(i,j)})(b^{(j,k)}-b^{(i,k)}-A^{(i,k)}\cdot \alpha)=
(A^{(i,k)}\cdot \alpha+b^{(i,k)})(b^{(k,j)}-b^{(i,j)}-A^{(i,j)}\cdot \alpha),
\end{cases}\\
&\Longleftrightarrow \forall i,j,k \in [M], \\
&\begin{cases}
A^{(i,j)}=0\mbox{ or }A^{(j,k)}=A^{(i,k)},\\
b^{(i,j)}=0\mbox{ or }A^{(j,k)}=A^{(i,k)},\\
A^{(i,j)}(b^{(j,k)}-b^{(i,k)})-b^{(i,j)}A^{(i,k)}=A^{(i,k)}(b^{(k,j)}-b^{(i,j)})-b^{(i,k)}A^{(i,j)},\\
b^{(i,j)})(b^{(j,k)}-b^{(i,k)})=b^{(i,k)}(b^{(k,j)}-b^{(i,j)}),
\end{cases}
\end{align*}
which is equivalent to conditions (\ref{C1})-(\ref{C3}). \end{proof}

\begin{prop}
Let $[M]=I_0\sqcup \ldots \sqcup I_k$ be a partition of $[M]$, such that $I_1,\ldots,I_k\neq \emptyset$ (note that $I_0$ may be empty), 
$A_1,\ldots,A_k\in \K^N$, $b_1,\ldots,b_p \in \K$, and $b^{(i)}_p \in \K$ for all $i\in I_0$ and $p\in [k]$. We define a deg1 pre-Lie algebra by:
\begin{align*}
A^{(i,j)}&=\begin{cases}
A_q\mbox{ if }j\in I_q, q\geq 1,\\
0\mbox{ if } j\in I_0.
\end{cases}&
b^{(i,j)}&=\begin{cases}
\delta_{p,q} b_q\mbox{ if }j\in I_q, q\geq 1, i\in I_p, p\geq 1,\\
0\mbox{ if }j\in I_0,\\
b^{(i)}_q\mbox{ if }j\in I_q, q\geq 1, i\in I_0.
\end{cases}\end{align*}
This pre-Lie algebra will be called the fundamental deg1 pre-Lie algebra of parameters $I=(I_0,\ldots,I_k)$, $A=(A_1,\ldots,A_k) \in M_{N,k}(\K)$,
$b=(b_1,\ldots,b_k) \in \K^k$ and $b^{(i,j)}$.
 \end{prop}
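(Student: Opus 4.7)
My plan is to apply the preceding lemma, which reduces the proposition to checking that the prescribed $A^{(i,j)}$ and $b^{(i,j)}$ satisfy the three identities (\ref{C1}), (\ref{C2}), (\ref{C3}). The verification will rest on one structural observation: in the construction, $A^{(i,j)}$ depends only on the second index $j$. Writing $A^j$ for this common value (so $A^j=A_q$ when $j\in I_q$ with $q\ge 1$, and $A^j=0$ when $j\in I_0$), we have $A^{(j,k)}=A^k=A^{(i,k)}$ for every $i,j,k$, which instantly disposes of (\ref{C1}).

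For (\ref{C2}) and (\ref{C3}) I would run a short case analysis on the blocks containing $j$ and $k$. If $j\in I_0$, then $A^j=0$ and $b^{(i,j)}=0$ by the definition of $b^{(i,j)}$ (third line of the formula), and also $b^{(k,j)}=0$ since $j\in I_0$; hence both sides of (\ref{C2}) and of (\ref{C3}) vanish. The case $k\in I_0$ is symmetric. When $j\in I_q$ and $k\in I_r$ with $q,r\ge 1$, the defining formula gives $b^{(j,k)}=\delta_{q,r}\,b_r$ and $b^{(k,j)}=\delta_{r,q}\,b_q$: if $q\ne r$ both identities collapse to $0=0$, and if $q=r$ then (\ref{C2}) reads $A_q b_q=A_q b_q$ while (\ref{C3}) reduces to $b^{(i,j)}\,b_q=b^{(i,k)}\,b_q$. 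The remaining equality $b^{(i,j)}=b^{(i,k)}$ holds because $j$ and $k$ lie in the same block $I_q=I_r$, and the formula for $b^{(i,\cdot)}$ depends on its second argument only through its block: the value is $\delta_{p,q}b_q$ when $i\in I_p$ with $p\ge 1$, and $b^{(i)}_q$ when $i\in I_0$, in either sub-case the same for $j$ and for $k$.

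I do not expect a real obstacle: the shape of the definition has been tailored precisely so this case split closes up. The only point requiring a moment of attention is the diagonal case $q=r$ in (\ref{C3}), where one must notice that $b^{(i,\cdot)}$ is insensitive to its second argument within a common block; once that is recorded, the rest is pure bookkeeping.
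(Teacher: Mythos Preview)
Your argument is correct and is exactly the ``direct verification'' the paper alludes to without writing out: the key structural observation that $A^{(i,j)}$ depends only on the block of $j$ immediately gives (\ref{C1}), and your block-by-block case split cleanly handles (\ref{C2}) and (\ref{C3}). One tiny slip: when $j\in I_0$ you cite the ``third line'' of the formula for $b^{(i,j)}$, but it is the second line (``$0$ if $j\in I_0$'') that gives $b^{(i,j)}=0$; the conclusion is unaffected.
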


\begin{proof} Direct verifications prove that these structure coefficients satisfy conditions (\ref{C1})-(\ref{C3}). \end{proof}\\

{\bf Remarks.} \begin{enumerate}
\item For example, the Faà di Bruno pre-Lie algebra of dimension $N$ is fundamental, with $I_j=\{j\}$ for all $j\in [M]$, $I_0=\emptyset$,
$A=I_N$ and $b=(1,\ldots,1)$.
\item The pre-Lie product of such a pre-Lie algebra is given in the following way: if $i\in I_p$, $j\in I_q$, $\alpha,\beta \in \NN$,
\begin{align*}
f_j(\beta)*f_i(\alpha)&= \begin{cases}
(A_q\cdot \alpha+\delta_{p,q} b_q)f_i(\alpha+\beta)\mbox{ if }p,q\neq 0,\\
(A_q\cdot \alpha+b^{(i)}_q)f_i(\alpha+\beta)\mbox{ if }p=0,q\neq 0,\\
0\mbox{ if }q=0.
\end{cases}\end{align*}
\end{enumerate}

\subsection{Classification of deg1 pre-Lie algebras}

Let $\g$ be a deg1 pre-Lie algebra. We attach to it an oriented graph $G(\g)$, defined as follows:
\begin{itemize}
\item The vertices of $G(\g)$ are the elements of $[M]$.
\item There exists an oriented edge from $i$ to $j$ if, and only if, $b^{(i,j)}\neq 0$.
\end{itemize} 

We shall write $i\longrightarrow j$ if there is an oriented edge from $i$ to $j$ in $G(\g)$.

\begin{lemma} \label{lemmegraphe}
Let $\g$ be a fundamental deg1 pre-Lie algebra and let $i\longrightarrow j\longrightarrow k$ in $G(\g)$. Then, in $G(\g)$:
$$\xymatrix{j \ar@(ul,dl)[] \ar@/^.5pc/[rr]&&k \ar@/^.5pc/[ll]\ar@(dr,ur)[] \\
&i \ar[ru] \ar[lu]&}$$
\end{lemma}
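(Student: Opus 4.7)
The plan is to derive all the edges displayed in the diagram purely from the cubic relation \eqref{C3}, namely $b^{(i,j)}b^{(j,k)}=b^{(i,k)}b^{(k,j)}$, applied to a few well-chosen triples. The only hypothesis I intend to use is $b^{(i,j)}\neq 0$ and $b^{(j,k)}\neq 0$, which is exactly what the hypotheses $i\to j$ and $j\to k$ translate to.

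First I would instantiate \eqref{C3} at the given triple $(i,j,k)$: the left-hand side $b^{(i,j)}b^{(j,k)}$ is nonzero by assumption, so the right-hand side $b^{(i,k)}b^{(k,j)}$ must also be nonzero, forcing both $b^{(i,k)}\neq 0$ and $b^{(k,j)}\neq 0$. This immediately furnishes the edges $i\to k$ and $k\to j$ of the picture.

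Next I would plug the triple $(j,k,j)$ into \eqref{C3}, obtaining $b^{(j,k)}b^{(k,j)}=b^{(j,j)}b^{(j,k)}$. Cancelling the nonzero factor $b^{(j,k)}$ yields $b^{(j,j)}=b^{(k,j)}\neq 0$, so $j$ carries a self-loop. The symmetric instantiation at $(k,j,k)$ gives $b^{(k,k)}=b^{(j,k)}\neq 0$, so $k$ also carries a self-loop. Together with the given edges $i\to j$ and $j\to k$, this accounts for the six arrows in the diagram.

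There is no real obstacle here: the whole argument is a short bookkeeping exercise on the single identity \eqref{C3}, and in fact does not even require the deg1 hypotheses \eqref{C1}, \eqref{C2}, or the coefficients $A^{(i,j)}$. The only mildly delicate point is choosing the triples $(j,k,j)$ and $(k,j,k)$ so that a nonzero factor can be cancelled, thereby converting the multiplicative identity into the equalities $b^{(j,j)}=b^{(k,j)}$ and $b^{(k,k)}=b^{(j,k)}$ that produce the loops.
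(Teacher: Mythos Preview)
Your proof is correct and follows essentially the same route as the paper: apply \eqref{C3} at $(i,j,k)$ to obtain $i\to k$ and $k\to j$, then use \eqref{C3} at $(j,k,j)$ and $(k,j,k)$ to produce the self-loops at $j$ and $k$. The only cosmetic difference is that you cancel a nonzero factor to obtain the equalities $b^{(j,j)}=b^{(k,j)}$ and $b^{(k,k)}=b^{(j,k)}$, whereas the paper simply observes that the relevant products are nonzero; both arguments are equivalent, and your remark that only \eqref{C3} is needed is accurate.
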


\begin{proof}
By condition (\ref{C3}), if $i\longrightarrow j\longrightarrow k$, then $b^{(i,j)}b^{(j,k)}=b^{(i,k)}b^{(k,j)}\neq 0$, so $i\longrightarrow k$ and $k\longrightarrow j$.
With the same argument, as $k\longrightarrow j\longrightarrow k$, $k\longrightarrow k$. As $j\longrightarrow k\longrightarrow j$, $j\longrightarrow j$. 
\end{proof}

\begin{prop}\label{propgraphe}
Let $\g$ be a fundamental deg1 pre-Lie algebra. The graph $G(\g)$ has the following structure:
\begin{enumerate}
\item The set of vertices $[M]$ admits a partition $[M]=I_0\sqcup \ldots \sqcup I_k$.
\item For all $1\leq p\leq k$, the complete subgraph of $G(\g)$ whose vertices are the elements of $I_p$ is, either complete, either an isolated vertex.
\item For all $i\in I_0$, there exists $D(i)\subseteq [k]$, such that for all $j \in [M]$, $i\longrightarrow j$ if, and only if,
$\displaystyle j\in \bigsqcup_{p\in D(i)} I_p$.
\item If $i\in I_0$, there is no vertex $j$ such that $j\longrightarrow i$.
\end{enumerate} \end{prop}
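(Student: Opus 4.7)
The plan is to derive each of the four items by directly inspecting the formulas for $b^{(i,j)}$ in the definition of a fundamental deg1 pre-Lie algebra; no use of Lemma \ref{lemmegraphe} is needed. Item (1) is simply the partition that forms part of the data defining $\g$.

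For item (4), I fix $i \in I_0$ and apply the second line of the defining formula for $b^{(j,i)}$, which yields $b^{(j,i)} = 0$ for every $j \in [M]$; hence no arrow $j \longrightarrow i$. For item (3), I fix $i \in I_0$, use (4) to rule out arrows toward $I_0$, and apply the third line of the formula to $j \in I_q$ with $q \geq 1$: this gives $b^{(i,j)} = b^{(i)}_q$, a value depending only on $q$. Setting $D(i) := \{\, q \in [k] : b^{(i)}_q \neq 0 \,\}$ gives the required description.

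For item (2), I take $i \in I_p$ and $j \in I_q$ with $p, q \geq 1$. The first line of the formula yields $b^{(i,j)} = \delta_{p,q} b_q$, so there are no edges between distinct classes $I_p, I_q$, and inside a single class $I_p$ the scalar $b^{(i,j)} = b_p$ is the same for every pair. Hence the subgraph on $I_p$ is either the complete graph (with all self-loops) when $b_p \neq 0$ or edgeless when $b_p = 0$.

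The only subtle point, and the main obstacle, is item (2): the case $b_p = 0$ with $|I_p| \geq 2$ does not literally fit the dichotomy ``complete or isolated vertex''. I expect the cleanest way forward is to agree that any $I_p$ with $b_p = 0$ and $|I_p| \geq 2$ may be repackaged into $I_0$ (with the newly absorbed vertices carrying coefficients $b^{(i)}_q = 0$), after which the dichotomy in item (2) holds as stated. Once this normalisation of the defining data is fixed, the proof is pure bookkeeping with no further difficulty.
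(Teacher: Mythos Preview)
Your proof is correct for the statement as literally written, and takes a genuinely different route from the paper. You exploit that a \emph{fundamental} deg1 pre-Lie algebra already carries a partition $I_0\sqcup\ldots\sqcup I_k$ in its defining data, so verifying items (1)--(4) becomes a direct inspection of the formulas for $b^{(i,j)}$. The paper, by contrast, ignores the given partition entirely and \emph{constructs} one from the graph $G(\g)$ alone: it defines $I_0$ as the set of vertices with no incoming edge, and uses Lemma~\ref{lemmegraphe} (which rests only on condition~(\ref{C3})) to show that the remaining connected components are each complete or an isolated vertex.

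The difference matters. The word ``fundamental'' in the hypothesis of both Lemma~\ref{lemmegraphe} and Proposition~\ref{propgraphe} appears to be a slip: the paper's proofs use only conditions (\ref{C1})--(\ref{C3}), so they apply to an arbitrary deg1 pre-Lie algebra. This generality is exactly what is needed in Theorem~\ref{theoclassif}, where one starts with an unknown deg1 pre-Lie algebra and must produce the partition. Your argument, relying on the partition being handed to you, would not serve that purpose. So: your proof is a clean verification of the statement as stated, but the paper's proof is really proving a stronger result, and that stronger result is the one that does the work downstream.

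Your observation about item~(2) when $b_p=0$ and $|I_p|\geq 2$ is well taken; your proposed normalisation (absorbing such $I_p$ into $I_0$) is a reasonable fix for the literal statement. In the paper's construction this issue does not arise in the same form, since the $I_p$ for $p\geq 1$ are built as connected components of $[M]\setminus I_0$ and are automatically either complete or single vertices.
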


\begin{proof} {\it First step.} Let $i_0\in [M]$. For all $p\geq 1$, we denote by $J_p$ the sets of vertices $j\in [M]$, such that there exists
$i_1,\ldots,i_{p-1}\in [M]$, $i_0\longrightarrow i_1\longrightarrow \ldots \longrightarrow i_{p-1} \longrightarrow j$.
We put $J=\displaystyle \bigcup_{p\geq 1} J_p$ and we consider a connected component $K$ of the  subgraph of $G(\g)$ of vertices $J$.
Let us prove that $K$ is either complete, or is an isolated vertex. 
First, observe that if $j\longrightarrow k$ in $K$, by definition of $J$, there exists $j_{p-1}$, such that $j_{p-1}\longrightarrow j\longrightarrow k$. 
By lemma \ref{lemmegraphe}, $\{j,k\}$ is a complete subgraph of $K$.

If $K$ has no edge, as it is connected, it is an isolated vertex; let us assume it has at least one edge $j\longrightarrow k$. By the preceding observation,
$\{j,k\}$ is a complete subgraph of $K$, so $K$ contains complete subgraphs. Let $L$ be a maximal complete subgraph of $K$. 
If $L\subsetneq K$, as $K$ is connected, there exists $k\in K\setminus L$, $l\in L$, such that $k\longrightarrow l$ or $l\longrightarrow k$. 
We already observed that $\{k,l\}$ is complete in both cases. Let $l'\in L$. As $L$ is complete, then $k\longrightarrow l\longrightarrow l'$ 
and $l'\longrightarrow l \longrightarrow k$: by lemma \ref{lemmegraphe}, $k\longrightarrow l'$, and $l'\longrightarrow k$: $L\sqcup\{k\}$ is complete,
which contradicts the maximality of $L$. So $K=L$ is complete.\\

{\it Second step.} We denote by $I_0$ the set of vertices $i$ such that there is no $j$ with $j\longrightarrow i$. 
Let $K$ be a connected component of the subgraph of vertices $[M]\setminus I_0$. If $k\in K$, then $k\notin I_0$,
so there exists $j\in I$, such that $j\longrightarrow k$. By the first step, $K$ is an isolated vertex or is complete. We denote by $I_1\sqcup \ldots \sqcup I_k$
the decomposition of $[M]\setminus I_0$ in connected components. Let $i_0 \in I_0$, and $j$ such that $i_0\longrightarrow j$.
Then $j\notin I_0$, so there exists $p\geq 1$, $j\in I_p$. If $I_p$ is an isolated vertex, then $i_0\longrightarrow j'$ for any $j'\in I_p$.
If $I_p$ is complete, for any $j'\in I_p$, then $i_0\longrightarrow j\longrightarrow j'$, so $i_0\longrightarrow j'$ by lemma \ref{lemmegraphe}.
Denoting by $D(i_0)$ the set of $p$ such that there exists $j\in I_p$ with $i_0\longrightarrow j$, then $i_0\longrightarrow j$
if, and only if, $j\in I_p$ for a $p\in D(i_0)$. \end{proof}

\begin{theo} \label{theoclassif}
Let $\g$ be a deg1 pre-Lie algebra. Up to an equivalence, it is the direct sum of fundamental deg1 pre-Lie algebras.
\end{theo}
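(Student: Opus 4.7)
The plan is a two-stage reduction: first split $\g$ as a direct sum of pre-Lie subalgebras indexed by ``interaction classes,'' then recognize each summand as a fundamental deg1 pre-Lie algebra by using the rescaling freedom built into the equivalence relation.

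For the splitting, I would introduce the equivalence relation $\sim$ on $[M]$ generated by declaring $i\sim j$ whenever $(A^{(i,j)},b^{(i,j)})\neq(0,0)$ or $(A^{(j,i)},b^{(j,i)})\neq(0,0)$, and let $[M]=C_1\sqcup\cdots\sqcup C_r$ be the induced partition. Setting $\g_m=Vect(f_i(\alpha)\mid i\in C_m,\,\alpha\in\NN)$, the very definition of $\sim$ forces $f_j(\beta)*f_i(\alpha)=(A^{(i,j)}\cdot\alpha+b^{(i,j)})f_i(\alpha+\beta)=0$ whenever $i$ and $j$ belong to distinct classes, so $\g=\g_1\oplus\cdots\oplus\g_r$ as pre-Lie algebras. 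Since the notion of equivalence of deg1 pre-Lie algebras is compatible with direct sums, it is enough to treat each $\g_m$ separately.

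For the recognition, observe that the proofs of Lemma~\ref{lemmegraphe} and Proposition~\ref{propgraphe} use only (C1)--(C3) and therefore apply to any deg1 pre-Lie algebra, not only to the fundamental ones. Applied to $\g_m$ they produce a canonical partition $C_m=I_0\sqcup I_1\sqcup\cdots\sqcup I_k$ in which $I_0$ is the set of source vertices of $G(\g_m)$ and each $I_p$ ($p\geq 1$) is either an isolated vertex or a loop-closed complete subgraph. I would then read off the parameters of the candidate fundamental structure directly from this picture. For $p\geq 1$, define $b_p$ as the common value of $b^{(i,j)}$ for $i,j\in I_p$; this is well defined because (C3) applied inside the complete subgraph $I_p$ forces the value to be constant. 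For $i\in I_0$ and $p\geq 1$, set $b^{(i)}_p=b^{(i,j)}$ for any $j\in I_p$, which is again well defined by (C3). For the $A$-parameters, (C1) shows that for each fixed $i\in C_m$ the vector $A^{(i,j)}$ takes a single value as $j$ ranges over indices with $(A^{(i,j)},b^{(i,j)})\neq(0,0)$, and (C2) then compares these values across different $i$'s and forces them to be collinear with a vector depending only on the block of $j$. The rescaling $f_i\mapsto\lambda_i^{-1}f_i$, which multiplies $A^{(i,j)}$ and $b^{(i,j)}$ by $\lambda_j$, is used to absorb the resulting proportionality factors and normalize the structure constants to exactly match the formulas of a fundamental deg1 pre-Lie algebra with parameters $(I,A,b,b^{(i)})$.

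The main obstacle I foresee is the simultaneous normalization: choosing a single family $(\lambda_i)_{i\in C_m}$ so that all the blockwise constants $A^{(i,j)}$, $b^{(i,j)}$, and $b^{(i)}_p$ collapse together into the fundamental shape, without contradicting the cross-constraints coming from (C2) and (C3). Once this bookkeeping is completed, direct substitution into the defining formulas of fundamental pre-Lie algebras concludes the argument.
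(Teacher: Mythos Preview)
Your initial splitting is fine, but the recognition step has a real gap. The partition $I_0\sqcup I_1\sqcup\cdots\sqcup I_k$ coming from Proposition~\ref{propgraphe} is built solely from $G(\g)$, that is, from the $b^{(i,j)}$: a vertex $j$ lands in $I_0$ exactly when it has no incoming $b$-edge. But the fundamental template requires $A^{(i,j)}=0$ whenever $j\in I_0$, and your class $C_m$ can perfectly well contain a vertex $j$ that is $b$-isolated yet has a nonzero $A$-column. For a concrete failure take $M=2$, $A^{(i,1)}=0$, $A^{(i,2)}=v\neq0$ for all $i$, $b^{(1,1)}=1$ and all other $b^{(i,j)}=0$. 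This satisfies (\ref{C1})--(\ref{C3}); your relation gives a single class $C_1=\{1,2\}$ (since $A^{(1,2)}\neq0$); Proposition~\ref{propgraphe} then returns $I_0=\{2\}$, $I_1=\{1\}$; and $A^{(i,2)}=v\neq0$ contradicts the fundamental shape for that partition. The algebra \emph{is} fundamental, but with $I_0=\emptyset$, $I_1=\{1\}$, $I_2=\{2\}$, $A_1=0$, $A_2=v$, $b_1=1$, $b_2=0$ --- a partition your procedure never produces.

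The paper avoids precisely this trap by decomposing in two stages rather than one: first into connected components of $G(\g)$, where connectivity together with (\ref{C2}) genuinely forces $A^{(\cdot,j)}=0$ for $j\in I_0$ (every $j\in I_0$ has an outgoing $b$-edge to some $k$, and then $A^{(i,j)}b^{(j,k)}=A^{(i,k)}b^{(k,j)}=0$); only afterwards are components that interact through $A$ merged via the coarser relation $i\sim j\Leftrightarrow A^{(i,k)}=A^{(j,k)}$ for all $k$. Two smaller points: your reading of (\ref{C1}) has the roles of $i$ and $j$ swapped --- it equates the \emph{rows} $A^{(i,\cdot)}$ and $A^{(j,\cdot)}$, not entries within a fixed row; and (\ref{C3}) alone does not make $b^{(i,j)}$ constant on a complete $I_p$, it only makes it depend on $j$ (take $i=k$ in (\ref{C3})), so the rescaling you postpone to the end is already needed at that step. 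The paper's ``first case'' carries out exactly this normalisation.
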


\begin{proof}  {\it First case.} We assume first that $G(\g)$ is complete. Let us choose $i_0 \in I$. For all $j$, $b^{(i_0,j)}\neq 0$: up to an equivalence,
we assume that $b^{(i_0,j)}=1$ for all $j$. 
Condition (\ref{C3}), with $i=i_0$ becomes: for all $j,k$, $b^{(j,k)}=b^{(k,j)}$. Still by condition (\ref{C3}), as $b^{(j,k)}=b^{(k,j)}\neq 0$,
for all $i,j,k$, $b^{(i,j)}=b^{(i,k)}$. Hence, for all $i,j$:
$$b^{(i,j)}=b^{(i,i_0)}=b^{(i_0,i)}=1.$$
Condition (\ref{C1}) becomes: for all $i,j,k$, $A^{(j,k)}=A^{(i,k)}$. We denote by $A^{(k)}$ the unique vector such that
$A^{(i,k)}=A^{(k)}$ for all $i$.  Condition (\ref{C2}) becomes: for all $j,k$, $A^{(k)}=A^{(j)}$. So there exists a unique vector $A$,
such that for all $i,j$, $A^{(i,j)}=A$. Finally, $\g$ is a fundamental deg1 pre-Lie algebra, with $[M]=I_1$.\\

{\it Second case.} We assume that $G(\g)$ is connected. We use the notations of proposition \ref{propgraphe}.
If there is an edge from $i$ to $j$, by condition (\ref{C1}), for all $k$, $A^{(j,k)}=A^{(i,k)}$. By connectivity, there exists vectors $A^{(k)}$,
such that for all $i,j,k$, $A^{(i,k)}=A^{(j,k)}=A^{(k)}$. We consider the pre-Lie subalgebra $\g_p$ of $\g$ generated by the elements 
$f_i(\alpha)$, $i\in I_p$, $\alpha \in \NN$. They are deg1 pre-Lie algebras; if $p\geq 1$ and $I_p$ is not a single element, 
then the graph associated to $\g_p$ is complete. By the first step, up to an equivalence, we can assume that $A^{(k)}$ is constant on $I_p$:
there exists a vector $A_p$ such that $A^{(k)}=A_p$ for all $k\in I_p$, $p\geq 1$. Moreover, there exists a scalar $b_p$,
such that $b^{(i,j)}=b_p$ for all $i,j \in I_p$, if $p\geq 1$. 

Let $j\in I_0$. By connectivity of $G(\g)$, and by definition of $I_0$, there exists $k$ such that $j\longrightarrow k$, so $b^{(j,k)}\neq 0$
and $b^{(k,j)}=0$. By condition (\ref{C2}), $A^{(i,j)}=0$ for all $i$, so $A^{(j)}=0$ if   $j\in I_0$. 

By definition of the graph, if $i\in I_p$, $j\in I_q$, $p,q \geq 1$ and $p\neq q$, then $b^{(i,j)}=0$.
If $j\in I_0$, then $b^{(i,j)}=0$ for all $i$. Let $i\in I_0$, $j,k\in I_p$, $p\geq 1$. If $j=k$, then $b^{(i,j)}=b^{(i,k)}$.
If $j\neq k$, then $I_p$ is complete and $j\longrightarrow k$ in $G(\g)$: $b^{(j,k)}=b^{(j,k)}\neq 0$. By condition (\ref{C3}),
$b^{(i,k)}=b^{(i,j)}$. So there exists $b^{(i)}_p$, such that $b^{(i,j)}=b^{(i)}_p$ for all $j\in I_p$. Finally, the structure coefficients are given in the following arrays:
\begin{align*}
A^{(i,j)}&: \begin{array}{|c|c|c|c|c|}
\hline i\setminus j&I_0&I_1&\ldots&I_k\\
\hline I_0&0&A_1&\ldots&A_k\\
\hline I_1&0&\vdots&&\vdots\\
\hline \vdots&0&\vdots&&\vdots\\
\hline I_k&0&A_1&\ldots&A_k\\
\hline \end{array}&
b^{(i,j)}&:  \begin{array}{|c|c|c|c|c|}
\hline i\setminus j&I_0&I_1&\ldots&I_k\\
\hline I_0&0&b^{(i)}_1&\ldots&b^{(i)}_k\\
\hline I_1&0&b_1&\ldots&0\\
\hline \vdots&0&\vdots&\ddots&\vdots\\
\hline I_k&0&0&\ldots&b_k\\
\hline \end{array}\end{align*}
So this is a fundamental deg1 pre-Lie algebra.\\

{\it General case}. Let $G_1,\ldots,G_l$ be the connected components of $G(\g)$. By the second step, up to an equivalence of $\g$,
the pre-Lie subalgebra of $\g$ corresponding to these subgraphs are fundamental deg1 pre-Lie algebras. 

{\it First subcase}. Let us assume that there exists $i\in G_p$, $j\in G_q$, with $p\neq q$, such that $A^{(i,j)}\neq 0$. 
By condition (\ref{C1}), for all $k$, $A^{(j,k)}=A^{(i,k)}$. By connectivity of $G_p$ and $G_q$, we deduce that for all $i'\in G_p$,
$j'\in G_q$, for all $k$, $A^{(i',k)}=A^{(j',k)}$. 

{\it Second subcase}. Let us assume that for all $i\in G_p$, $j\in G_q$, $A^{(i,j)}=0$. As $b^{(i,j)}=0$,
for all $\alpha,\beta \in \NN$, for all $i \in G_p$, $j\in G_q$, $f_j(\beta)*f_i(\alpha)=0$.\\

We define an equivalence relation $\sim$ on $[M]$ in the following way:
$i\sim j$ if for all $k$, $A^{(i,k)}=A^{(j,k)}$. The first subcase implies that the equivalence classes are disjoint union 
of $G_p$: we denote them by $H_1,\ldots,H_n$. The second step gives that the corresponding subalgebras $\g_1,\ldots \g_n$
are fundamental deg1 pre-Lie algebras. By the second subcase, $\g=\g_1\oplus\ldots \oplus \g_n$. \end{proof}

\subsection{SDSE associated to a deg1 pre-Lie algebra}

We here describe the dual of the enveloping algebra of a deg1 pre-Lie algebra, as a subalgebra of a Hopf algebra of decorated rooted trees.
We use for this the Guin-Oudom extension of the pre-Lie product \cite{Oudom}.

\begin{lemma}
Let $\g$ be a fundamental deg1 pre-Lie algebra. For all $i\in I_p$, $p\neq 0$, $\alpha,\beta_1,\ldots,\beta_k \in \NN$:
$$f_{j_1}(\beta_1)\ldots f_{j_k}(\beta_k)*f_i(\alpha)=\begin{cases}
0\mbox{ if one of the $j_p$ is in $I_0$},\\
\displaystyle \prod_{q=1}^k \prod_{r=0}^{\varepsilon_q-1} (A_q\cdot \alpha+b_q(\delta_{p,q}-r)) f_i(\alpha+\beta_1+\ldots+\beta_k)\mbox{ otherwise},
\end{cases}$$
where $\varepsilon_q=\sharp\{p\in [k]\mid j_p\in I_q\}$.
If $i\in I_0$:
$$f_{j_1}(\beta_1)\ldots f_{j_k}(\beta_k)*f_i(\alpha)=\begin{cases}
0\mbox{ if one of the $j_p$ is in $I_0$},\\
\displaystyle \prod_{q=1}^k \prod_{r=0}^{\varepsilon_q-1} (A_q\cdot \alpha+b^{(i)}_q-rb_q) f_i(\alpha+\beta_1+\ldots+\beta_k)\mbox{ otherwise}.
\end{cases}$$
\end{lemma}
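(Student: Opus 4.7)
The plan is to induct on $k$ using the Guin-Oudom extension \cite{Oudom} of the pre-Lie product to an action $*: S(\g) \otimes \g \rightarrow \g$, which is characterized by $1*y = y$, by each $x \in \g$ acting as a derivation of $S(\g)$ (I will write $x *^{\mathrm{der}} u$ for this action on $u \in S(\g)$), and by the recursion $(x \cdot u) * y = x * (u * y) - (x *^{\mathrm{der}} u) * y$ for $x \in \g$, $u \in S(\g)$, $y \in \g$. The base case $k = 1$ is the defining pre-Lie product of $\g$.

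\textbf{Vanishing.} Suppose some $j_m \in I_0$; by commutativity of $S(\g)$ I relabel so that $m = 1$. The first Guin-Oudom term is $f_{j_1}(\beta_1) * (u * f_i(\alpha))$ with $u = f_{j_2}(\beta_2) \cdots f_{j_k}(\beta_k)$; by induction $u * f_i(\alpha)$ is either $0$ or a scalar multiple of some $f_i(\gamma)$, and in the latter case $f_{j_1}(\beta_1) * f_i(\gamma) = 0$ because $A^{(i,j_1)} = b^{(i,j_1)} = 0$. The second term is a sum of expressions containing a factor $f_{j_1}(\beta_1) * f_{j_l}(\beta_l)$ for some $l \geq 2$; each such factor vanishes since $A^{(j_l,j_1)} = b^{(j_l,j_1)} = 0$, which uses the fact that no edge points into $I_0$ (proposition \ref{propgraphe}(4)).

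\textbf{Main case.} Assume now all $j_m \in I_1 \sqcup \ldots \sqcup I_k$ and peel off $f_{j_1}(\beta_1)$, with $j_1 \in I_{q_1}$. Since the linear factors furnished by the induction hypothesis depend only on $\alpha$ and not on the $\beta_l$, the same scalar $P(\alpha)$ coming from applying the IH to $u * f_i(\alpha)$ appears in every term after induction. The Guin-Oudom sum then becomes
$$
P(\alpha)\Bigl[A_{q_1}\cdot(\alpha + \textstyle\sum_{l \geq 2}\beta_l) + \delta_{p,q_1} b_{q_1} - \sum_{l=2}^k \bigl(A_{q_1}\cdot \beta_l + \delta_{q_l,q_1}b_{q_1}\bigr)\Bigr] f_i(\alpha + \beta_1 + \ldots + \beta_k).
$$
The $A_{q_1}\cdot\beta_l$ cancel between the two terms, and the remaining Kronecker deltas count the other $j_l$'s lying in $I_{q_1}$, namely $\varepsilon_{q_1}-1$. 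The bracket therefore collapses to $A_{q_1}\cdot\alpha + b_{q_1}(\delta_{p,q_1} - (\varepsilon_{q_1}-1))$, which is exactly the missing $r = \varepsilon_{q_1}-1$ factor needed to extend the $I_{q_1}$-block of $P(\alpha)$ from $\varepsilon_{q_1}-1$ to $\varepsilon_{q_1}$ terms, yielding the claimed product. The case $i \in I_0$ goes through verbatim with $\delta_{p,q_1}b_{q_1}$ replaced by $b^{(i)}_{q_1}$, since the subtracted $b^{(j_l,j_1)}$ terms depend only on $j_l \notin I_0$, not on $i$.

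The main obstacle is the telescoping bookkeeping: identifying which factor of the target product is supplied by which of the two Guin-Oudom terms, and checking that the $A_{q_1}\cdot\beta_l$ contributions really cancel regardless of whether $j_l$ lies in $I_{q_1}$ or not. Once the $k=2$ verification (sub-cases: $j_1, j_2$ in the same $I_q$, in distinct non-zero blocks, and one of them in $I_0$) is carried out explicitly, the general induction is essentially a relabelling of this model computation.
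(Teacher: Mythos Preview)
Your proof is correct and follows essentially the same route as the paper: induction on $k$ via the Guin--Oudom recursion, with the crucial observation that the inductive scalar $P(\alpha)$ is independent of the $\beta_l$, so that every term in the derivation sum carries the same $P(\alpha)$ and the $A_{q_1}\cdot\beta_l$ contributions cancel. The only cosmetic difference is that the paper peels off $f_{j_{k+1}}$ while you peel off $f_{j_1}$; by commutativity in $S(\g)$ this is immaterial. One small remark: the vanishing $A^{(j_l,j_1)}=b^{(j_l,j_1)}=0$ for $j_1\in I_0$ is immediate from the definition of a fundamental deg1 pre-Lie algebra (the column $j\in I_0$ of both structure arrays is identically zero), so invoking Proposition~\ref{propgraphe}(4) is a detour, though not incorrect.
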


\begin{proof}
We proceed by induction on $k$. The result is obvious if $k=1$. Let us assume the result at rank $k$. We assume that $i\in I_p$, $p\geq 1$.
We put:
$$f_{j_1}(\beta_1)\ldots f_{j_k}(\beta_k)*f_i(\alpha)=P_{(j_1,\ldots,j_k)}(\alpha)f_i(\alpha+\beta_1+\ldots+\beta_k).$$
Then:
\begin{align*}
f_{j_1}(\beta_1)\ldots f_{j_{k+1}}(\beta_{k+1})*f_i(\alpha)&=f_{j_{k+1}}(\beta_{k+1})*\left(f_{j_1}(\beta_1)\ldots f_{j_k}(\beta_k)*f_i(\alpha)\right)\\
&-\sum_{p=1}^k f_{j_1}(\beta_1)\ldots (f_{j_{k+1}}(\beta_{k+1})*f_{j_p}(\beta_p))\ldots f_{j_k}(\beta_k)*f_i(\alpha).
\end{align*}
If $j_{k+1}\in I_0$, this is zero. Let us assume that $j_{k+1}\in I_q$, $q\geq 1$. For all $p$, let $bl(p)$ be the unique $r$ such that $j_p \in I_r$. Then:
\begin{align*}
&f_{j_1}(\beta_1)\ldots f_{j_{k+1}}(\beta_{k+1})*f_i(\alpha)\\
&=P_{(j_1,\ldots,j_k)}(\alpha)f_{j_{k+1}}(\beta_{k+1})*f_i(\alpha+\beta_1+\ldots+\beta_k)\\
&-\sum_{p=1}^k (A_q\cdot \beta_p+\delta_{bl(p),q} b_q) f_{j_1}(\beta_1)\ldots f_{j_p}(\beta_p+\beta_{k_{k+1}})\ldots f_{j_k}(\beta_k)*f_i(\alpha)\\
&=P_{(j_1,\ldots,j_k)}(\alpha)(A_q\cdot(\alpha+\beta_1+\ldots+\beta_k)+\delta_{p,q}b_q)f_i(\alpha+\beta_1+\ldots+\beta_{k+1})\\
&-P_{(j_1,\ldots,j_k)}(\alpha)(A_q\cdot(\beta_1+\ldots+\beta_k)+b_q(\varepsilon_q-1))f_i(\alpha+\beta_1+\ldots+\beta_{k+1})\\
&=P_{(j_1,\ldots,j_k)}(\alpha)(A_q\cdot \alpha+b_q(\delta_{p,q}-\varepsilon_q+1))f_i(\alpha+\beta_1+\ldots+\beta_{k+1}).
\end{align*}
The computation is similar if $i\in I_0$. \end{proof}\\

We shall write shortly:
$$f_{j_1}(\beta_1)\ldots f_{j_k}(\beta_k)*f_i(\alpha)=P_{(j_1,\ldots,j_k)}(\alpha)f_i(\alpha+\beta_1+\ldots+\beta_k).$$

Let $D_{M,N}=[M]\times \N^N_*$. Recall that $\g^{D_{M,N}}$ is the free pre-Lie algebra generated by the rooted trees $\tdun{$d$}$, $d\in D_{M,N}$.
The set $D_{M,N}$ is $\N^N$-graded, with $deg(i,\alpha)=\alpha$, and this graduation is connected.

If $\g$ is a deg1 pre-Lie algebra, one defines a connected $\N^N$-graduation of the pre-Lie $\gN$ by putting $f_i(\alpha)$ homogeneous of degree $\alpha$.
 We define a pre-Lie algebra morphism:
$$\phi:\left\{\begin{array}{rcl}
\gN&\longrightarrow&\g\\
\tdun{$(i,\alpha) $}\hspace{.2cm}&\longrightarrow&f_i(\alpha).
\end{array}\right.$$

\begin{lemma}
Let $T\in \T^{D_{M,N}}$. We denote by $(r(T),d(T))$ the decoration of the root of $T$. 
There exists a scalar $\lambda_T$, such that:
$$\phi(T)=\lambda_T f_{r(T)}(deg(T)).$$
These coefficients can be inductively defined by:
$$\lambda_T=\begin{cases}
1\mbox{ if }T=\tdun{$(i,\alpha)$}\hspace{.45cm},\\
\lambda_{T_1}\ldots\lambda_{T_k} P_{(r(T_1),\ldots,r(T_k))}(\alpha)\mbox{ if }t=B_{(i,\alpha)}(T_1\ldots T_k).
\end{cases}$$ 
\end{lemma}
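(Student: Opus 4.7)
The plan is to prove the statement by induction on the number of vertices of $T$, using the Guin-Oudom extension of the pre-Lie product to the enveloping algebras.

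The base case $T = \tdun{$(i,\alpha)$}$ is immediate from the definition of $\phi$: we get $\phi(T) = f_i(\alpha) = 1 \cdot f_{r(T)}(\deg(T))$, so $\lambda_T = 1$ as claimed.

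For the inductive step, suppose $T = B_{(i,\alpha)}(T_1 \ldots T_k)$ with $T_1, \ldots, T_k \in \T^{D_{M,N}}$. The key observation is that in the free pre-Lie algebra $\gN$, grafting all the $T_p$'s onto a single-vertex tree produces exactly $T$; more precisely, using the Guin-Oudom extension $*$ (which makes sense since $\phi$ is a pre-Lie morphism, hence extends to a morphism of the enveloping algebras and commutes with this extension), we have the identity
\begin{equation*}
T_1 \ldots T_k * \tdun{$(i,\alpha)$} = B_{(i,\alpha)}(T_1 \ldots T_k) = T,
\end{equation*}
because $\tdun{$(i,\alpha)$}$ has only one vertex available for grafting, namely its root. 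Applying $\phi$ and using the induction hypothesis gives
\begin{equation*}
\phi(T) = \phi(T_1) \cdots \phi(T_k) * \phi(\tdun{$(i,\alpha)$}) = \lambda_{T_1} \cdots \lambda_{T_k}\, f_{r(T_1)}(\deg(T_1)) \cdots f_{r(T_k)}(\deg(T_k)) * f_i(\alpha).
\end{equation*}
Invoking the previous lemma (with $\beta_p = \deg(T_p)$ and $j_p = r(T_p)$), this equals
\begin{equation*}
\lambda_{T_1} \cdots \lambda_{T_k}\, P_{(r(T_1), \ldots, r(T_k))}(\alpha)\, f_i(\alpha + \deg(T_1) + \cdots + \deg(T_k)).
\end{equation*}
Since $r(T) = i$ and $\deg(T) = \alpha + \deg(T_1) + \cdots + \deg(T_k)$, this is exactly $\lambda_T\, f_{r(T)}(\deg(T))$ with $\lambda_T$ given by the inductive formula, which completes the induction.

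The main subtlety is the justification that the pre-Lie morphism $\phi: \gN \longrightarrow \g$ extends to a morphism of enveloping algebras compatible with the Guin-Oudom product $*$; this is precisely the content of the Guin-Oudom construction \cite{Oudom}, which canonically extends any pre-Lie morphism in this way. Once this is granted, the rest is routine bookkeeping of decorations and degrees under the induction.
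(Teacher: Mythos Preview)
Your proof is correct and follows essentially the same approach as the paper: induction on the number of vertices, writing $T = T_1\ldots T_k * \tdun{$(i,\alpha)$}$ via the Guin-Oudom extension, applying $\phi$, and invoking the previous lemma. The only difference is that you spell out more explicitly why $\phi$ commutes with the Guin-Oudom product, which the paper leaves implicit.
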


\begin{proof} We proceed by induction on the number $n$ of vertices of $T$. It is obvious if $n=1$. Let us assume the result at all rank $<n$, $n\geq 2$.
We put $t=B_{(i,\alpha)}(T_1\ldots T_k)$. Then $T=T_1\ldots T_k *\tdun{$(i,\alpha) $}\hspace{5mm}$, so:
\begin{align*}
\phi(T)&=\phi(T_1)\ldots \phi(T_k) f_i(\alpha)\\
&=\lambda_{T_1}\ldots \lambda_{T_k} f_{r(T_1)}(|T_1|)\ldots f_{r(T_k)}(|T_k|)*f_i(\alpha)\\
&=\lambda_{T_1}\ldots \lambda_{T_k} P_{(r(T_1),\ldots,r(T_k))}(\alpha) f_i(\alpha+deg(T_1)+\ldots+deg(T_k))\\
&=\lambda_{T_1}\ldots \lambda_{T_k} P_{(r(T_1),\ldots,r(T_k))}(\alpha) f_i(deg(t)).
\end{align*}
Hence, the result holds for all $n$. \end{proof}\\

By duality, we obtain a Hopf algebra morphism:
$$\phi^*:\left\{\begin{array}{rcl}
\mathcal{U}(\g)^*&\longrightarrow&\hN\\
f_i(\alpha)^*&\longrightarrow&\displaystyle \sum_{deg(T)=\alpha, r(T)=i} \frac{\lambda_T}{s_T}T.
\end{array}\right.$$
We put $\mu_T=\frac{\lambda_T}{s_T}$ for any rooted tree $T\in \T^{D_{M,N}}$, and, for any $i\in [M]$:
$$X_i=\sum_{r(T)=i} \mu_TT.$$
If $t=B_{(i,\alpha)}\left(T_1^{\beta_1}\ldots T_l^{\beta_l}\right)$, where $T_1,\ldots,T_k$ are distinct trees, with $i\in I_p$, $p\geq 1$, 
denoting by $\varepsilon_q$ the number of trees $t'$ in $T_1^{\beta_1}\ldots T_l^{\beta_l}$ such that $r(T_i)\in I_q$:
\begin{align*}
\mu_T&=\frac{\lambda_{T_1}^{\beta_1}\ldots \lambda_{T_l}^{\beta_l}}
{s_{T_1}^{\beta_1}\ldots s_{T_l}^{\beta_l} \beta_1!\ldots \beta_l!}\prod_{q=1}^k \prod_{r=0}^{\varepsilon_q-1}
(A_q\cdot \alpha+b_q(\delta_{p,q}-r))\\
&=\mu_{T_1}^{\alpha_1}\ldots\mu_{T_l}^{\alpha_l} \frac{\varepsilon_1!\ldots \varepsilon_k!}{\beta_1!\ldots \beta_l!}
\prod_{q=1}^k \frac{1}{\varepsilon_q!}\prod_{r=0}^{\varepsilon_q-1}(A_q\cdot \alpha+b_q(\delta_{p,q}-r)).
\end{align*}
Consequently, if $i\in I_p$, $p\geq 1$:
$$X_i=\sum_{\alpha \in \NN} B_{(i,\alpha)}\left(
\prod_{q=1}^k f_{i,q,\alpha}\left(\sum_{j\in I_q}X_j\right)\right),$$
with:
$$f_{i,q,\alpha}(X)=F_{A_q\cdot \alpha+b_q\delta_{p,q},b_q}(X)=F_{A_q\cdot \alpha,b_q}(X) F_{b_q\delta_{p,q},b_q}(X)=
F_{A_q\cdot \alpha,b_q}(X) (1+b_qX)^{\delta_{p,q}}.$$
A similar computation for $i\in I_0$ gives:
$$X_i=\sum_{\alpha \in \NN} B_{(i,\alpha)}\left(
\prod_{q=1}^kF_{A_q\cdot \alpha,b_q}\left(\sum_{j\in I_q}X_j\right)
\prod_{q=1}^k F_{b^{(i)}_q,b_q}\left(\sum_{j\in I_q}X_j\right)\right).$$

We proved:

\begin{theo}\label{theo14}
Let $[M]=I_0\sqcup I_1 \sqcup \ldots \sqcup I_k$, $A_1,\ldots,A_k \in \K^N$, $b_1,\ldots,b_k \in \K$, $b^{(i)}_1,\ldots,b^{(i)}_k\in \K$ for all $i\in I_0$.
We consider the following SDSE:
\begin{align*}
\forall i\in I_p,\: p\geq 1,\: X_i&=\sum_{\alpha \in \NN}B_{(i,\alpha)}\left(
\prod_{q=1}^kF_{A_q\cdot \alpha,b_q}\left(\sum_{j\in I_q}X_j\right)\left(1+b_p\sum_{j\in I_p} X_j\right)\right),\\
\forall i\in I_0,\:X_i&=\sum_{\alpha \in \NN}B_{(i,\alpha)}\left(\prod_{q=1}^kF_{A_q\cdot \alpha,b_q}\left(\sum_{j\in I_q}X_j\right)
\prod_{q=1}^k F_{b^{(i)}_q,b_q}\left(\sum_{j\in I_q}X_j\right)\right).
\end{align*}
The $\N^N$-graded subalgebra of $\hN$ generated by the unique solution of this SDSE is Hopf. Its dual is the enveloping algebra of
the fundamental deg1 pre-Lie algebra associated to $I$, $A$ and $b$. 
\end{theo}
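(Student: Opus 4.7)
The argument has essentially been assembled in the lemmas preceding the statement; here is how I would stitch it together.

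First, I would use the freeness of $\gN$ as a pre-Lie algebra on the generators $\tdun{$(i,\alpha)$}$ to define the pre-Lie morphism $\phi:\gN\longrightarrow \g$ sending $\tdun{$(i,\alpha)$}$ to $f_i(\alpha)$. Since $\phi$ is visibly homogeneous for the $\N^N$-graduation (the generator of degree $\alpha$ is sent to an element of degree $\alpha$), the induced morphism of enveloping algebras $\mathcal{U}(\gN)\longrightarrow \mathcal{U}(\g)$ is a graded Hopf algebra morphism. Dualizing in the graded sense, one gets an injective graded Hopf algebra morphism
\[
\phi^*:\mathcal{U}(\g)^*\longrightarrow \mathcal{U}(\gN)^*=\hN.
\]
Its image is, by construction, a graded Hopf subalgebra of $\hN$; to conclude it suffices to show that this image is the subalgebra generated by the $X_i$'s defined by the claimed SDSE.

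Next, I would compute $\phi^*(f_i(\alpha)^*)$ explicitly. The Guin--Oudom extension of the pre-Lie product to $\mathcal{U}(\gN)$ lets me evaluate $\phi$ on a forest $T_1\ldots T_k$ acting on a generator, and by the formula for $f_{j_1}(\beta_1)\cdots f_{j_k}(\beta_k)*f_i(\alpha)$ established earlier, one obtains the recursive formula for $\lambda_T$ already written down. Passing to the symmetrized dual basis introduces the symmetry factors $s_T$, and this is what gives $\mu_T=\lambda_T/s_T$ and $X_i=\phi^*(f_i(\infty)^*)=\sum_{r(T)=i}\mu_TT$. The Hopfness of $\h_{(S)}$ is then automatic from $\phi^*$ being a Hopf morphism, and the identification of the dual with $\mathcal{U}(\g)$ follows from $\phi^*$ being injective and corestricting onto $\h_{(S)}$.

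The only real computation is to convert the recursion for $\mu_T$ into the closed-form SDSE. If $T=B_{(i,\alpha)}(T_1^{\beta_1}\ldots T_l^{\beta_l})$ with $i\in I_p$, $p\ge 1$, and $\varepsilon_q$ the number of roots among the $T_j$ lying in $I_q$, the formula
\[
\mu_T=\mu_{T_1}^{\beta_1}\ldots\mu_{T_l}^{\beta_l}\cdot\frac{\varepsilon_1!\ldots\varepsilon_k!}{\beta_1!\ldots\beta_l!}\prod_{q=1}^k\frac{1}{\varepsilon_q!}\prod_{r=0}^{\varepsilon_q-1}(A_q\cdot\alpha+b_q(\delta_{p,q}-r))
\]
must be summed over all forests of a given root colour. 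Grouping trees in $I_q$ together, the sum $\sum_{j\in I_q}X_j$ accounts for the monomial $\prod\mu_{T_j}$, the multinomial factor $\varepsilon_q!/(\beta_1!\cdots)$ reconstitutes the usual combinatorial coefficient of a power of that sum, and the product $\prod_{r=0}^{\varepsilon_q-1}(A_q\cdot\alpha+b_q(\delta_{p,q}-r))/\varepsilon_q!$ is exactly the coefficient of $X^{\varepsilon_q}$ in $F_{A_q\cdot\alpha+b_q\delta_{p,q},b_q}(X)$. Using the identity $F_{a+a',b}=F_{a,b}F_{a',b}$ recalled in the notations splits this as $F_{A_q\cdot\alpha,b_q}(X)\cdot(1+b_qX)^{\delta_{p,q}}$, yielding the first displayed equation. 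The case $i\in I_0$ is entirely parallel, with $b_q\delta_{p,q}$ replaced by $b^{(i)}_q$, giving the second equation.

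The step I expect to be mildly delicate is the bookkeeping in the last paragraph: keeping track of the three competing combinatorial factors (the $1/\beta_j!$ from dualisation, the $\varepsilon_q!$ from regrouping by colour, and the $1/\varepsilon_q!$ absorbed into the series $F_{a,b}$) and verifying that they combine cleanly into the advertised product of generating series. Once that identification is made, no further argument is needed: $\h_{(S)}$ coincides with the image of the Hopf algebra morphism $\phi^*$, hence is Hopf, and is dually isomorphic to $\mathcal{U}(\g)$ by construction.
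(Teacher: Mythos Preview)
Your proposal is correct and follows essentially the same approach as the paper: define the pre-Lie morphism $\phi$ by freeness, compute $\phi(T)=\lambda_T f_{r(T)}(\deg T)$ via the Guin--Oudom extension formula, dualize to get the injective Hopf morphism $\phi^*$, and then unpack the recursion for $\mu_T=\lambda_T/s_T$ into the product of the series $F_{A_q\cdot\alpha+b_q\delta_{p,q},b_q}$ (splitting via $F_{a+a',b}=F_{a,b}F_{a',b}$). The paper in fact writes ``We proved:'' just before the theorem, since the argument is entirely contained in the preceding lemmas and computations you describe; the only cosmetic slip is the stray ``$f_i(\infty)^*$'' in your write-up, which should read $\sum_\alpha f_i(\alpha)^*$.
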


{\bf Example.} We choose $N=M$, $I=\{1\}\sqcup \ldots \sqcup \{N\}$, $A=I_N$ and $b_i=1$ for all $i\in [N]$. The associated Hopf SDSE is:
\begin{align*}
(S):\: \forall i\in [N], \: X_i=\sum_{\alpha \in \NN}B_{(i,\alpha)}\left(
\prod_{q=1}^N (1+X_q)^{\alpha_q}(1+X_i)\right).
\end{align*}
This is related to the SDSE described in (\ref{system}). We only conserve as decorations the elements of:
$$D'=\{deg(\alpha) \mid \alpha \in D\}.$$
For all $\alpha=(\alpha_0,\ldots,\alpha_k) \in [N]^{k+1}$, we put $B'_{\alpha}=B_{(\alpha_0,deg(\alpha))}$. The SDSE $(S)$ becomes:
\begin{align*}
(S'):\: \forall i\in [N], \: X_i&=\sum_{\alpha \in\N^k} B'_{(i,\alpha)}\left(
\prod_{q=1}^N (1+X_q)^{\sum_{p, \alpha_p=q}d_p}(1+X_i)^{d_0+1}\right)\\
\Longleftrightarrow \forall i\in [N], \:X_i&=\sum_{\alpha \in\N^k} B'_{(i,\alpha)}\left((1+X_{\alpha_1})^{d_1}\ldots (1+X_{\alpha_k})^{d_k}(1+X_i)^{d_0+1}\right).
\end{align*}
This is the system of (\ref{system}), which is consequently a Hopf SDSE.

\section{Group associated to a fundamental pre-Lie algebra}

\subsection{Lie algebra associated to a fundamental pre-Lie algebra}

\begin{prop}
Let $\g$ be a fundamental deg1 pre-Lie algebra, with parameters $I$, $A$ and $b$.
We denote by $r$ the rank of $A$. Then $\g$ is isomorphic, as a Lie algebra, to a fundamental deg1 pre-Lie algebra $\g'$ with structure coefficients 
given by:
\begin{align*}
A'^{(i,j)}&: \begin{array}{|c|c|c|}
\hline i\setminus j&1\ldots r&r+1\ldots M\\
\hline1\ldots M&A'_j&0\\
\hline \end{array}&
b'^{(i,j)}&:  \begin{array}{|c|c|c|}
\hline i\setminus j&1\ldots k&k+1\ldots M\\
\hline 1\ldots k&0&0\\
\hline k+1\ldots M&b'^{(i)}_j&0\\
\hline \end{array}\\
A'&=\left(\begin{array}{c}
I_r\\ *
\end{array}\right), \end{align*}
with $0\leq r \leq k\leq M$. We shall say that such a fundamental deg1 pre-Lie algebra is reduced.
\end{prop}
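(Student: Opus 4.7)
I will build the required isomorphism as a composition of three elementary modifications, each of which is either a vector-space change of basis of $\g$ or a purely formal change of structure constants that leaves the Lie bracket untouched; at every step I will check that the new data still satisfy the fundamental deg1 conditions (\ref{C1})--(\ref{C3}).

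First, I collapse each block $I_p$ with $p\geq 1$ and $|I_p|\geq 2$ to a singleton. The idea is to fix $i_p\in I_p$ and, for every $j\in I_p\setminus\{i_p\}$ and every $\alpha\in\NN$, to introduce $g_j(\alpha):=f_j(\alpha)-f_{i_p}(\alpha)$. Direct computation from $f_j(\beta)*f_i(\alpha)=(A^{(i,j)}\cdot\alpha+b^{(i,j)})f_i(\alpha+\beta)$ should give $[g_j(\beta),g_{j'}(\alpha)]=0$, $[f_i(\beta),g_j(\alpha)]=0$ for $i\in I_0$, and $[f_l(\beta),g_j(\alpha)]=(A_q\cdot\alpha+b_p\delta_{p,q})g_j(\alpha+\beta)$ for $l\in I_q$, $q\geq 1$. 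Hence the $g_j$'s behave as new elements of $I_0$ with $b^{(g_j)}_q=b_p\delta_{p,q}$, and every $I_p$, $p\geq 1$, becomes a singleton in the new presentation. After this reduction the only bracket internal to the singleton $\{i_p\}$ is $[f_{i_p}(\beta),f_{i_p}(\alpha)]=(A_p\cdot(\alpha-\beta))f_{i_p}(\alpha+\beta)$, in which $b_p$ cancels; a case analysis of the remaining brackets will show that $b_p$ no longer appears anywhere in the Lie structure, so I pass to the equivalent fundamental deg1 presentation with every $b_p=0$.

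It remains to normalize $A$. After reindexing, I may assume that $A_1,\ldots,A_r$ are linearly independent and $A_q=\sum_{q'\leq r}\lambda_{q,q'}A_{q'}$ for $q>r$; setting $\tilde f_q(\alpha)=f_q(\alpha)-\sum_{q'\leq r}\lambda_{q,q'}f_{q'}(\alpha)$ for $q>r$ and redoing the Step~1 computation will yield a fundamental deg1 presentation in which $\tilde A_q=0$ for all $q>r$. Finally, to put the $N\times r$ matrix $M=(A_1|\cdots|A_r)$ into the shape $\left(\begin{smallmatrix}I_r\\ *\end{smallmatrix}\right)$, I first apply a permutation $\sigma\in S_N$ of the $N$ grading coordinates via $f_i(\alpha)\mapsto f_i(\sigma\alpha)$, chosen so that $r$ linearly independent rows of $M$ are moved to the top; the resulting upper $r\times r$ block is then an invertible matrix $P$, and the column-mixing $\tilde f_q=\sum_{q'\leq r}(P^{-1})_{q',q}f_{q'}$ replaces $M$ by $MP^{-1}$, whose top block is $I_r$. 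The delicate point throughout is to keep track, at every mixing $\tilde f_q=\sum c_{q,q'}f_{q'}$, of the simultaneous transformations $\tilde A_q=\sum c_{q,q'}A_{q'}$ and $\tilde b^{(i)}_q=\sum c_{q,q'}b^{(i)}_{q'}$ for $i\in I_0$, and to check that (\ref{C1})--(\ref{C3}) are preserved; this will follow from the linearity of the defining relations in $A$ and the $b^{(i)}$'s.
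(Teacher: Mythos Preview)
Your proposal is correct and follows essentially the same route as the paper: first collapse each block $I_p$ ($p\geq 1$) to a singleton via $g_j=f_j-f_{i_p}$, observe that the diagonal constants $b_p$ drop out of the Lie bracket, then act on the columns $A_1,\ldots,A_k$ by an invertible matrix (combined with a permutation of the rows of $A$) to reach the form $\left(\begin{smallmatrix}I_r\\ *\end{smallmatrix}\right)$.

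The only noteworthy difference is presentational. The paper performs the row permutation with the phrase ``up to a permutation of the rows and the columns of $A$'' and then applies a single $P\in GL_k(\K)$; you instead first zero out the dependent columns, then realize the row permutation explicitly as the relabeling $f_i(\alpha)\mapsto f_i(\sigma\alpha)$, and finish with a second column operation. Your explicit description of the row permutation as a bijection of the index set $\NN$ is arguably cleaner than the paper's passing remark, since it makes transparent that this step is a genuine pre-Lie (hence Lie) isomorphism and not merely a bookkeeping convention. Otherwise the two arguments are interchangeable, and the verifications you flag (that each mixing $\tilde f_q=\sum c_{q,q'}f_{q'}$ transforms $A_q$ and $b^{(i)}_q$ linearly and preserves (\ref{C1})--(\ref{C3})) are exactly the computations carried out in the paper's second step.
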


\begin{proof}  {\it First step}. 
For any $p\geq 1$, let us fix $i_0 \in I_p$. If $i\in I_p\setminus\{i_0\}$, we put $g_i(\alpha)=f_i(\alpha)-f_{i_0}(\alpha)$ for all $\alpha \in \NN$.
If $j \in I_q$, $q\neq 0$:
$$f_j(\beta)*g_i(\alpha)=(A_q\cdot \alpha+b_q \delta_{p,q})g_i(\alpha+\beta).$$
Consequently: 
\begin{align*}
g_j(\beta)*g_i(\alpha)&=0 \mbox{ if }j\in I_p\setminus\{i_0\},&
f_j(\beta)*g_i(\alpha)&=0\mbox{ if }j\in I_0.
\end{align*}
Replacing the elements $f_i(\alpha)$ by $g_i(\alpha)$ for all $i\in I_p\setminus\{i_0\}$, these computations proves that $\g$ is isomorphic to
a deg1 pre-Lie algebra $\g'$, with $[M]=I'_0\sqcup \ldots \sqcup I'_k$, such that 
$$I'_q=\begin{cases}
\{i_0\}\mbox{ if }q=p,\\
I_0\sqcup I_p\setminus\{i_0\}\mbox{ if }q=0,\\
I_q\mbox{ otherwise.}
\end{cases}$$
Proceding in this way for all $p$, and after a reindexation, we obtain that $\g$ is isomorphic to a fundamental deg1 pre-Lie algebra with:
\begin{align*}
A^{(i,j)}&: \begin{array}{|c|c|c|c|c|}
\hline i\setminus j&1&\ldots&k&k+1\ldots M\\
\hline1\ldots M&A_1&\ldots&A_k&0\\
\hline \end{array}\\ \\
b^{(i,j)}&:  \begin{array}{|c|c|c|c|c|c|}
\hline i\setminus j&1&\ldots&\ldots& k&k+1\ldots M\\
\hline 1&b_1&0&\ldots&0&0\\
\hline \vdots&0&\ddots&\ddots&\vdots&\vdots\\
\hline \vdots&\vdots&\ddots&\ddots&0&\vdots\\
\hline k&0&\ldots&0&b_k&\vdots\\
\hline k+1\ldots M&b^{(i)}_1&\ldots&\ldots&b^{(i)}_k&0\\
\hline \end{array}\end{align*}
If $1\leq i,j \leq k$, in $\g'$:
\begin{align*}
[f_j(\beta),f_i(\alpha)]&=(A_j\cdot \alpha+\delta_{i,j}b_i)f_i(\alpha+\beta)-(A_i\cdot \beta+\delta_{i,j}b_j)f_j(\alpha+\beta)\\
&=A_j\cdot \alpha f_i(\alpha+\beta)-A_i\cdot \beta f_j(\alpha+\beta).
\end{align*}
Hence, the Lie bracket of $\g$ does not depend of $b$. \\

{\it Second step.} Up to a Lie algebra isomorphism, we can now assume that $b_1=\ldots=b_k=0$. Let $P \in GL_k(\K)$. For all $i\in [k]$, we put:
$$g_i(\alpha)=\sum_j p_{j,i} f_j(\alpha).$$
Then $(g_i(\alpha))_{i\leq k,\alpha \in \NN}\sqcup (f_i(\alpha))_{i>k, \alpha \in \NN}$ is a basis of $\g$. Moreover, if $i,j \in [k]$:
\begin{align*}
g_j(\beta)*g_j(\alpha)&=\sum_{i',j'}p_{j',j}p_{i',i} f_{j'}(\beta)*f_{i'}(\alpha)\\
&=\sum_{i',j'} p_{j',j}p_{i',i}A_{j'}\cdot \alpha f_{i'}(\alpha+\beta)\\
&=\left(\sum_{j'} p_{j',j}A_{j'}\right)\cdot \alpha g_i(\alpha+\beta).
\end{align*} 
Similar computations give, if $1\leq j\leq k<i\leq N$:
$$g_j(\beta)*f_i(\alpha)=\left(\left(\sum_{j'} p_{j',j}A_{j'}\right)\cdot \alpha+\left(\sum_{j'}p_{j',j}b^{(i)}_{j'}\right)\right)f_i(\alpha+\beta).$$
Moreover, if $1\leq i\leq k<j \leq N$:
$$f_j(\beta)*g_i(\alpha)=0.$$
Hence, $\g$ is isomorphic, as a Lie algebra, to the fundamental deg1 pre-Lie $\g'$, with $A'=A P$, and $b'^{(i,j)}=0$ if $i,j \leq k$. 
Up to a permutations of the rows and the columns of $A$, we can assume that:
$$A=\left(\begin{array}{cc}
A_1&A_2\\
A_3&A_4
\end{array}\right),$$
with $A_1\in GL_r(\K)$. As $r=Rank(A)$, there exists $Q\in M_{r,k-r}$, such that:
$$\left(\begin{array}{c}A_2\\A_4\end{array}\right)=\left(\begin{array}{c}A_1\\A_3\end{array}\right)Q.$$
We then take:
$$P=\left(\begin{array}{cc}
A_1^{-1}&-Q\\ 0&I_{k-r}
\end{array}\right),$$
and then:
$$A'=\left(\begin{array}{cc}
I_r&0\\
*&0
\end{array}\right),$$
which finally gives the announced result. \end{proof}

\subsection{Group associated to a reduced deg1 pre-Lie algebra}

{\bf Notations.} Let $p \in \N^*$ and $q\in \N$. We fix a matrix $\calB\in M_{q,p}(\K)$. For all $i\in [p]$, we denote:
$$\bfG_i=\{x_i(1+F)\mid F\in \K[[x_1,\ldots,x_p,y_1,\ldots,y_q]]_+\}\subseteq\K[[x_1,\ldots,x_p,y_1,\ldots,y_q]]_+.$$

\begin{prop}
Let $\bfG_\calB=\bfG_1\times \ldots \times \bfG_p\subseteq \K[[x_1,\ldots,x_p,y_1,\ldots,y_q]]^p$, with the product defined in the following way: 
if $F=(F_1,\ldots,F_p)$ and $G=(G_1,\ldots,G_p)\in \bfG_\calB$,
$$F\bullet G=G\left(F_1,\ldots,F_p,y_1\left(\frac{F_1}{x_1}\right)^{\calB_{1,1}}\hspace{-4mm}\ldots \left(\frac{F_p}{x_p}\right)^{\calB_{1,p}},\ldots,
y_q\left(\frac{F_1}{x_1}\right)^{\calB_{q,1}}\hspace{-4mm}\ldots \left(\frac{F_p}{x_p}\right)^{\calB_{q,p}}\right).$$
Then $\bfG_\calB$ is isomorphic to the group of characters of a $\N^{p+q}$-graded Hopf algebra $\h_\calB$.
The graded dual of $\h_\calB$ is the enveloping algebra of the reduced deg1 pre-Lie algebra $\g_\calB$ associated to the structure coefficients:
\begin{align*}
A^{(i,j)}&: \begin{array}{|c|c|}
\hline i\setminus j&1\ldots p\\
\hline1\ldots p&A_j\\
\hline \end{array}&
A&=\left( \begin{array}{c}
I_p\\\calB
\end{array}\right)&
b^{(i,j)}&:  \begin{array}{|c|c|}
\hline i\setminus j&1\ldots p\\
\hline1\ldots p&0\\
\hline \end{array}\end{align*}\end{prop}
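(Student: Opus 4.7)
The strategy is to first construct $\h_\calB$ by applying Theorem~\ref{theo14} with the parameters $k = M = p$, $I_j = \{j\}$ for $j \in [p]$, $I_0 = \emptyset$, all $b_j = 0$, and $A_j \in \K^{p+q}$ equal to the $j$-th column of $A = \left(\begin{array}{c} I_p \\ \calB \end{array}\right)$. Since $b_j = 0$, the series $F_{a,0}(X)$ collapses to $e^{aX}$ and the factor $(1 + b_p \sum_{j\in I_p} X_j)$ trivializes, so the SDSE becomes
$$X_i \;=\; \sum_{\gamma \in \N^{p+q}_*} B_{(i,\gamma)}\Bigl(\prod_{q'=1}^p e^{(A_{q'}\cdot \gamma) X_{q'}}\Bigr), \qquad i \in [p],$$
and Theorem~\ref{theo14} then yields the desired $\N^{p+q}$-graded Hopf subalgebra $\h_\calB \subseteq \h^{D_{p,p+q}}$ whose graded dual is $\mathcal{U}(\g_\calB)$, settling the second assertion of the proposition.

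Next I would verify directly that $(\bfG_\calB, \bullet)$ is a group: the identity is $(x_1,\ldots,x_p)$, associativity reduces to the associativity of formal substitution once one checks that if $H = F \bullet G$ then $H_k/x_k = (F_k/x_k)\cdot(G_k/x_k)$ evaluated at the twisted point, ensuring the iterated $y$-twists are consistent, and inverses come from formal inversion of $(F_1,\ldots,F_p)$ together with the formal invertibility of each $F_k/x_k$. To connect with characters I would introduce
$$\Phi: \bfG_\calB \longrightarrow \mathrm{Char}(\h_\calB), \qquad \Phi(F)(X_i(\gamma)) \;=\; [x^{\gamma'} y^{\gamma''}]\log(F_i/x_i),$$
where $\gamma = (\gamma',\gamma'') \in \N^p \times \N^q$. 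Since the $X_i(\gamma)$ are algebraically independent in $\h_\calB$ (remark at the start of Section~2), this prescription extends uniquely to a character, and $\Phi$ is a set-bijection with inverse $\chi \mapsto \bigl(x_i \exp\sum_\gamma \chi(X_i(\gamma)) x^{\gamma'} y^{\gamma''}\bigr)_{i \in [p]}$.

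The main step is to show $\Phi(F \bullet G) = \Phi(F) \star \Phi(G)$. Writing $\hat\chi^i = \sum_\gamma \chi(X_i(\gamma)) x^{\gamma'} y^{\gamma''}$, one first checks that the $\bullet$-definition gives
$$\log\bigl((F \bullet G)_i / x_i\bigr) \;=\; \hat\chi_F^i + \hat\chi_G^i\bigl(x_{q'} e^{\hat\chi_F^{q'}},\, y_k e^{\sum_{q'} \calB_{k,q'} \hat\chi_F^{q'}}\bigr);$$
expanding $\hat\chi_G^i$ monomial by monomial and using the identity $A_{q'}\cdot \gamma = \gamma'_{q'} + \sum_k \calB_{k,q'} \gamma''_k$, each monomial picks up the weight $\prod_{q'} e^{(A_{q'}\cdot\gamma) \hat\chi_F^{q'}}$. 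On the character side, applying $\chi_F \otimes \chi_G$ to $\Delta X_i$ via the SDSE produces exactly these weights, so matching coefficients term by term in $(x,y)$ delivers the homomorphism property. The main obstacle lies here: $\Delta$ applied to the exponential factor $\prod_{q'} e^{(A_{q'}\cdot\gamma)X_{q'}}$ reintroduces the $\Delta X_{q'}$'s in both tensor slots, so the comparison is implicitly recursive; this is cleanest handled by an induction on the grading $|\gamma|$, or alternatively by realizing the Lie algebra of $\bfG_\calB$ as the vector fields $f_i(\gamma) = x^{\gamma'} y^{\gamma''}\bigl(x_i \partial_{x_i} + \sum_k \calB_{k,i} y_k \partial_{y_k}\bigr)$, checking by direct calculation that the induced pre-Lie structure coincides with that of $\g_\calB$, and concluding by formal exponentiation of prounipotent groups.
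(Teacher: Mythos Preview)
Your plan is sound but runs in the opposite direction from the paper's argument, and the two routes illuminate different aspects of the construction.

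The paper builds $\h_\calB$ \emph{from} $\bfG_\calB$: it first checks associativity of $\bullet$ directly, then takes the algebra generated by the coordinate functions $X_i(\alpha)(G)=[x_i x^{\alpha'}y^{\alpha''}]\,G_i$. The key device is a torus action $\phi_{\lambda,\mu}(F)=\frac{1}{\lambda}F(\lambda x,\mu y)$ by monoid automorphisms; transposing it makes each $X_i(\alpha)$ homogeneous of degree $\alpha$, and this grading forces the coproduct dual to $\bullet$ to land in $\h_\calB\otimes\h_\calB$ rather than a completion. One then gets ``connected bialgebra $\Rightarrow$ Hopf $\Rightarrow$ $\bfG_\calB$ is a group'' for free, and a short coefficient computation in $\Delta X_i(\alpha+\beta)$ yields the pre-Lie product $f_j(\beta)*f_i(\alpha)=(\alpha_j+\sum_{j'}\calB_{j',j}\alpha_{j'+p}+\delta_{i,j})f_i(\alpha+\beta)$, which agrees with $\g_\calB$ \emph{as a Lie algebra} (the $\delta_{i,j}$ drops out of the bracket).

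Your route instead invokes Theorem~\ref{theo14} to produce $\h_\calB$ inside a tree Hopf algebra with $\mathcal U(\g_\calB)$ as graded dual by construction, and then must identify $\mathrm{Char}(\h_\calB)$ with $\bfG_\calB$. Your logarithmic parametrisation $\Phi(F)(X_i(\gamma))=[x^{\gamma'}y^{\gamma''}]\log(F_i/x_i)$ is the correct one for the $b=0$ system (precisely because the SDSE involves $e^{(A_{q'}\cdot\gamma)X_{q'}}$), and the recursive matching you describe does go through by induction on $|\gamma|$; your vector-field alternative is cleaner and shows directly that the tangent Lie algebra of $\bfG_\calB$ is $\g_\calB$, after which the prounipotent correspondence finishes the job. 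What you gain is that the identification of the Lie algebra comes for free from Theorem~\ref{theo14}; what the paper gains is self-containment (no appeal to Theorem~\ref{theo14}), the group property of $\bfG_\calB$ as a corollary rather than a separate check, and the torus trick that replaces your inductive bookkeeping by a one-line homogeneity argument.

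One small correction: the pre-Lie structure you extract from the vector fields has $b^{(i,j)}=0$, whereas the paper's coordinate computation gives $b^{(i,j)}=\delta_{i,j}$; these differ as pre-Lie algebras but coincide as Lie algebras, which is all the statement requires.
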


\begin{proof} We shall write shortly $F\bullet G=G\left(F,Y\left(\frac{F}{x}\right)^\calB\right)$. Let us first prove that $\bfG_\calB$ is a monoid.
Let $F,G,H \in \bfG_\calB$.
\begin{align*}
F\bullet(G\bullet H)&=G\bullet H\left(F,y\left(\frac{F}{x}\right)^\calB\right)\\
&=H\left(G\left(F,y\left(\frac{F}{x}\right)^\calB\right),y\left(\frac{F}{x}\right)^\calB\left(\frac{G\left(F,y\left(\frac{F}{x}\right)^\calB\right)}{F}\right)^\calB\right)\\
&=H\left(G\left(F,y\left(\frac{F}{x}\right)^\calB\right),y\left(\frac{G\left(F,y\left(\frac{F}{x}\right)^\calB\right)}{x}\right)^\calB\right)\\
&=H\left(F\bullet G,y\left(\frac{F\bullet G}{x}\right)^\calB\right)\\
&=(F\bullet G)\bullet H.
\end{align*}
The identity of this monoid is the element $I=(x_1,\ldots,x_p)$.\\

Let $\lambda=(\lambda_1,\ldots,\lambda_p,\mu_1,\ldots,\mu_q) \in (\K^*)^{p+q}$. We define:
$$\phi_\lambda:\left\{\begin{array}{rcl}
\bfG_\calB&\longrightarrow&\bfG_\calB\\
F&\longrightarrow&\left(\frac{1}{\lambda_i}F_i(\lambda_1x_1,\ldots,\lambda_px_p,\mu_1y_1,\ldots,\mu_qy_q)\right)_{1\leq i\leq p}.
\end{array}\right.$$
Let us prove that this defines a action of the torus $T=(\K^*)^{p+q}$ on the monoid $\bfG_\calB$ by automorphisms.
We shall write shortly $\phi_{\lambda,\mu}(F)=\frac{1}{\lambda}F(\lambda x,\mu y)$. Clearly, $\phi_{\lambda,\mu}\circ \phi_{\lambda',\mu'}
=\phi_{\lambda\lambda',\mu\mu'}$ and $\phi_{1,1}=Id_{\bfG_\calB}$, so this is indeed an action. Let $F,G \in \bfG_\calB$.
\begin{align*}
\phi_{(\lambda,\mu)}(F\bullet G)&=\frac{1}{\lambda}G\left(F(\lambda x,\mu y),\mu y\left(\frac{F(\lambda x,\mu y)}{\lambda x}\right)^\calB\right)\\
&=
\phi_{(\lambda,\mu)}(F)\bullet \phi_{(\lambda,\mu)}(G).
\end{align*}

For all $i\in [p]$, $\lambda \in \N_*^{p+q}$, we put:
$$X_i(\lambda):\left\{\begin{array}{rcl}
\bfG_\calB&\longrightarrow&\K\\
G&\longrightarrow&\mbox{coefficient of $x_ix_1^{\lambda_1}\ldots x_p^{\lambda_p}y_1^{\mu_1}\ldots y_q^{\mu_q}$ in $\bfG_i$}.
\end{array}\right.$$
We obtain an action on the torus $T$ on these functions by transposition: 
\begin{align*}
\phi^*_\lambda(X_i(\alpha))(G)&=X_i(\alpha)(\phi_\lambda(G))\\
&=X_i(\alpha)\left(\frac{1}{\lambda}G(\lambda x,\mu y)\right)\\
&=\frac{1}{\lambda_i} \lambda_1^{\alpha_1}\ldots \lambda_p^{\alpha_p}\mu_1^{\alpha_{p+1}}\ldots \mu_q^{\alpha_{p+q}}X_i(\alpha)(G).
\end{align*}
So this action is given by $\phi_\lambda(X_i(\alpha))=\lambda^\alpha X_i(\alpha)$. Consequently, denoting by $\h_\calB$ the algebra generated 
by the elements $X_i(\alpha)$, it gives it a $\N^{p+q}$-graduation, for which $X_i(\alpha)$ is homogeneous of degree $\alpha$:
this graduation is finite-dimensional and connected.\\

We define a coproduct $\Delta:\h_\calB\longrightarrow \widehat{\h_\calB\otimes \h_\calB}$ in the following way:
$$\forall X \in \h_\calB,\: \forall F,G \in \bfG_\calB,\:\Delta(X)(F,G)=X(F\bullet G).$$
As the torus acts by automorphisms, for all $\lambda \in T$:
$$\Delta(\phi_\lambda^*(X))(F,G)=X(\phi_\lambda(F\bullet G))=X(\phi_\lambda(F)\bullet \phi_\lambda(G))
=(\phi_\lambda^*\otimes \phi_\lambda^*)\circ \Delta(X)(F,G).$$
Hence, $\Delta$ respects the action of $T$, so respects the graduation implied by this action, and consequently is homogeneous of degree $0$.
As the graduation is finite-dimensional, $\Delta(\h_\calB)\subseteq \h_\calB\otimes \h_\calB$. As $\bfG_\calB$ is a monoid, $\h_\calB$ is a bialgebra. 
As it is connected, it is a Hopf algebra, so $\bfG_\calB$ is a group. By construction, the group of characters of $\h_\calB$ is $\bfG_\calB$. \\

By Cartier-Quillen-Milnor-Moore's theorem, the graded dual of $\h_\calB$ is the enveloping algebra of a Lie algebra $\g$,
whose basis is given by elements $f_i(\alpha)$ dual to the elements $X_i(\alpha)$. Moreover, as the composition of $\bfG_\calB$ is linear in the second
variable, the Lie bracket of $\g$ is induced by a pre-Lie product $*$; by homogeneity, for all $i,j \in [p]$, $\alpha,\beta \in \N_*^{p+q}$, there exists a scalar
$\lambda^{(i,j)}(\alpha,\beta)$ such that:
$$f_j(\beta)*f_i(\alpha)=\lambda^{(i,j)}(\alpha,\beta) f_i(\alpha+\beta).$$
Moreover, $\lambda^{(i,j)}(\alpha,\beta)$ is the coefficient of $X_j(\beta)\otimes X_i(\alpha)$ in $\Delta(X_i(\alpha+\beta))$. Direct computations give that:
$$f_j(\beta)*f_i(\alpha)=\left(\alpha_j+\sum_{j'=1}^q A_{j',j}\alpha_{j'+p}+\delta_{i,j}\right) f_i(\alpha+\beta),$$
so $\g$ is isomorphic to $\g_\calB$ as a Lie algebra. \end{proof}\\

{\bf Example.}  If $q=0$, we obtain a Faà di Bruno group of formal diffeomorphisms, with the composition.
This is the case for the SDSE $(S)_{FdB}$ in (\ref{system}), where $A=I_N$. The associated group is:
$$G=\left(\left\{(x_1(1+F_1),\ldots,x_N(1+F_N))\mid F_1,\ldots,F_N\in \K[[x_1,\ldots,x_N]]_+\right\},\circ\right).$$

\begin{prop}
Let $V_0$ be the group $(\K[[x_1,\ldots,x_p,y_1,\ldots,y_q]]_+,+)$. The group $\bfG_\calB$ acts by automorphisms on $V_0$ by:
$$\forall F\in \bfG_\calB,\: \forall P\in V_0,\: F\hookrightarrow P=P\left(F,y\left(\frac{F}{x}\right)^\calB\right).$$
For all $r\geq 0$, the group $V_0^r \rtimes \bfG_\calB$ is isomorphic to the character group of a $\N^{p+q}$-graded Hopf algebra $\h_{\calB,r}$, 
whose graded dual is the enveloping algebra of a fundamental deg1 pre-Lie algebra $\g_\calB$ with structure coefficients:
\begin{align*}
A^{(i,j)}&: \begin{array}{|c|c|c|}
\hline i\setminus j&1\ldots p&p+1\ldots p+r\\
\hline1\ldots p+r&A_j&0\\
\hline \end{array}&
A&=\left( \begin{array}{c}
I_p\\\calB
\end{array}\right).\\
b^{(i,j)}&:  \begin{array}{|c|c|}
\hline i\setminus j&1\ldots p+r\\
\hline1\ldots p+r&0\\
\hline \end{array} \end{align*}\end{prop}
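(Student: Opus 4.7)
The plan is to mirror, step for step, the proof of the previous proposition, extending each construction to accommodate the additive factor $V_0^r$.

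I first verify that $\hookrightarrow$ defines an action of $\bfG_\calB$ on $V_0$ by additive automorphisms. Linearity $F \hookrightarrow (P+P') = F\hookrightarrow P + F\hookrightarrow P'$ is immediate from the substitution formula, so each $F\hookrightarrow\cdot$ is a group automorphism of $(V_0,+)$. The associativity $(F\bullet G)\hookrightarrow P = F\hookrightarrow(G\hookrightarrow P)$ follows from exactly the chain of equalities used to prove $\bullet$ associative, with $P$ playing the role of $H$: expanding $F\hookrightarrow(G\hookrightarrow P) = P\!\left(G(F,y(F/x)^\calB),\,y(F/x)^\calB(G(F,y(F/x)^\calB)/F)^\calB\right)$ and simplifying $(F/x)^\calB(\cdot/F)^\calB$ yields $P\!\left(F\bullet G,\,y(F\bullet G/x)^\calB\right)$. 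This makes the semi-direct product $V_0^r\rtimes\bfG_\calB$, with multiplication $(P,F)\cdot(Q,G)=(P+F\hookrightarrow Q,\,F\bullet G)$, a well-defined group.

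Next, I enlarge the coordinate algebra: alongside the $X_i(\alpha)$ from the previous proposition I introduce, for each $\ell\in[r]$ and $\alpha\in\mathbb{N}_*^{p+q}$, the function $Y_\ell(\alpha)$ extracting the coefficient of $x_1^{\alpha_1}\cdots x_p^{\alpha_p}y_1^{\alpha_{p+1}}\cdots y_q^{\alpha_{p+q}}$ from $P_\ell$. The torus $T=(\K^*)^{p+q}$ acts on $V_0^r\rtimes\bfG_\calB$ by $\phi_{\lambda,\mu}(P,F) = \bigl(P(\lambda x,\mu y),\,\phi_{\lambda,\mu}(F)\bigr)$; a direct verification shows $\phi_{\lambda,\mu}(F\hookrightarrow P) = \phi_{\lambda,\mu}(F)\hookrightarrow\phi_{\lambda,\mu}(P)$, so this is an action by group automorphisms. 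Transposing as before, each $Y_\ell(\alpha)$ becomes homogeneous of degree $\alpha$, yielding a finite-dimensional connected $\mathbb{N}^{p+q}$-graduation on the algebra $\h_{\calB,r}$ generated by the $X_i(\alpha)$ and $Y_\ell(\alpha)$. The coproduct $\Delta(X)(F_1,F_2) = X(F_1\cdot F_2)$ is $T$-equivariant, hence of degree zero, hence lands in $\h_{\calB,r}\otimes\h_{\calB,r}$; combined with the group structure this makes $\h_{\calB,r}$ a connected graded Hopf algebra whose character group is $V_0^r\rtimes\bfG_\calB$.

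Finally, Cartier--Quillen--Milnor--Moore identifies the graded dual with the enveloping algebra of a Lie algebra $\g$ with basis $\{f_i(\alpha)\}\cup\{g_\ell(\alpha)\}$ dual to $\{X_i(\alpha)\}\cup\{Y_\ell(\alpha)\}$. Because the group law is linear in its second argument, the Lie bracket comes from a pre-Lie product whose four blocks are computed separately: $f_j(\beta)*f_i(\alpha)$ reduces to the calculation of the previous proposition; $g_\ell(\beta)*f_i(\alpha)=0$ since the $V_0$-factor does not feed into the $\bfG_\calB$-coordinate of a product; $g_\ell(\beta)*g_{\ell'}(\alpha)=0$ because $V_0$ is commutative; and $f_j(\beta)*g_\ell(\alpha)$ is obtained by linearizing $Q_\ell\!\left(F,y(F/x)^\calB\right)$ in $F$ at $F=x$, giving the coefficient $\bigl(\alpha_j+\sum_{j'=1}^q\calB_{j',j}\alpha_{j'+p}\bigr)g_\ell(\alpha+\beta) = (A_j\cdot\alpha)\,g_\ell(\alpha+\beta)$. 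These match the announced structure coefficients at the Lie-algebra level, where the diagonal $\delta_{i,j}$ from the $f*f$ block is absorbed (as in the previous proposition) by the fact that the Lie bracket sees only the antisymmetric part.

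The main obstacle will be the cross term $f_j(\beta)*g_\ell(\alpha)$: extracting the coefficient of $X_j(\beta)\otimes Y_\ell(\alpha)$ inside $\Delta(Y_\ell(\alpha+\beta))$ requires separating the $\alpha_j$-type contributions coming from substitution into the $x$-arguments from the $\calB$-type contributions coming from substitution into $y(F/x)^\calB$, and checking that no constant $\delta$-contribution appears there, unlike in the $f*f$ block. Once this computation is done the identification with the claimed fundamental deg1 pre-Lie algebra (with partition $\{1\},\ldots,\{p\}$ and $I_0=\{p+1,\ldots,p+r\}$, and vanishing $b$) is direct.
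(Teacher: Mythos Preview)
Your proposal is correct and follows essentially the same route as the paper: verify that $\hookrightarrow$ is an action by automorphisms via the same substitution computation as for $\bullet$, extend the torus action to $V_0^r$ so that the new coordinates $Y_\ell(\alpha)$ are homogeneous of degree $\alpha$, deduce that $\h_{\calB,r}$ is a graded connected Hopf algebra, and compute the four blocks of the pre-Lie product. Your remark that the $\delta_{i,j}$ in the $f*f$ block disappears only at the Lie-algebra level is exactly the point the paper uses (implicitly here, explicitly in the previous proposition) to match the stated $b^{(i,j)}=0$ table.
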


\begin{proof} Let $F\in \bfG_\calB$, $P,Q \in V_0$. Obviosuly, $F\hookrightarrow (P+Q)=F\hookrightarrow P+F\hookrightarrow Q$.
Let $F,G\in \bfG_\calB$, $P\in V_0$. Then:
\begin{align*}
F\hookrightarrow (G\hookrightarrow P)&=P\left(G\left(F,y\left(\frac{F}{x}\right)^\calB\right),
y\left(\frac{F}{x}\right)^\calB\left(\frac{G\left(F,y\left(\frac{F}{x}\right)^\calB\right)}{F}\right)^\calB\right)\\
&=P\left(G\left(F,y\left(\frac{F}{x}\right)^\calB\right),y\left(\frac{G\left(F,y\left(\frac{F}{x}\right)^\calB\right)}{x}\right)^\calB\right)\\
&=G\left(F,y\left(\frac{F}{x}\right)^\calB\right)\hookrightarrow P\\
&=(F\bullet G)\hookrightarrow P.
\end{align*}
We define an action of the torus $T=(\K^*)^{p+q}$ over $V_0$ by:
$$\psi_\lambda(P)=P(\lambda_1x_1,\ldots,\lambda_px_p,\mu_1y_1,\ldots,\mu_q y_q).$$
It is easy to prove that this is an action by automorphisms, and for all $F\in \bfG_\calB$, $P\in V_0$:
$$\psi_\lambda(F\hookrightarrow P)=\phi_\lambda(F)\hookrightarrow \psi_\lambda(P).$$

A system of coordinates of the group $V_0^r\rtimes \bfG_\calB$ is given by the elements $X_i(\alpha)$ defined on $\bfG_\calB$ 
and $Y_j(\alpha)$ defined on $V_0^r$ by:
$$Y_j(\alpha):\left\{\begin{array}{rcl}
V_0^r&\longrightarrow&\K\\
(P_1,\ldots,P_r)&\longrightarrow&\mbox{coefficient of $x^\alpha $ in $P_j$}.
\end{array}\right.$$
These elements generate an algebra $\h_{\calB,r}$, containing $\h_\calB$. The action of the torus extends the graduation of $\h_\calB$
to $\h_{\calB,r}$, making a graded connected algebra. Consequently, it inherits a coproduct, dual of the composition of the group $V_0^r\rtimes \bfG_\calB$,
making it a graded connected Hopf algebra. Note that $\h_{\calB,r}$ contains $\h_\calB$, and by construction its character group is $V_0^r\rtimes \bfG_\calB$.

The composition in $V_0^r\rtimes \bfG_\calB$ is given by:
$$(P_1,\ldots,P_r,F)\bullet (Q_1,\ldots,Q_r,G)=((P_1+F\hookrightarrow Q_1,\ldots,P_r+F\hookrightarrow Q_r,F\bullet G).$$
Consequently, it is linear in the second variable; hence, the graded dual of $\h_{\calB,r}$ is the enveloping algebra of a pre-Lie algebra $\g$.
This has a basis $(f_i(\alpha))_{i\in [p],\alpha \in \N_*^{p+q}}\sqcup (g_j(\beta))_{j\in [r],\beta \in \N_*^{p+q}}$, dual of the SDSE of coordinates
$X_i(\alpha)$ and $Y_j(\beta)$. The pre-Lie product of $f_j(\beta)$ and $f_i(\alpha)$ is the same as in $\g_\calB$, and direct computations give:
\begin{align*}
f_j(\beta)*g_i(\alpha)&=A_j\cdot \alpha g_i(\alpha+\beta),&
g_j(\beta)*f_i(\alpha)&=0,&
g_j(\beta)*g_i(\alpha)&=0.
\end{align*}
So this is indeed isomorphic to a reduced deg1 pre-Lie algebra, as announced. \end{proof}\\

This last result is proved similarly:
\begin{prop}
Let $a\in \K^r$ and $b\in \K^p$. Let $V_{a,b}$ be the group $(\K[[x_1,\ldots,x_p,y_1,\ldots,y_q]]_+,+)$. 
The group $V_0^r\rtimes \bfG_\calB$ acts by automorphisms on $V_{a,b}$ by:
$$(P_1,\ldots,P_r,F)\hookrightarrow Q=Q\left(F,y\left(\frac{F}{x}\right)^\calB\right)e^{a_1P_1+\ldots+a_rP_r}\left(\frac{F_1}{x_1}\right)^{b_1}
\ldots \left(\frac{F_p}{x_p}\right)^{b_p}.$$
If $a^{(1)},\ldots,a^{(s)}\in \K^r$ and $b^{(1)},\ldots,b^{(s)} \in \K^p$, 
the group $(V_{a^{(1)},b^{(1)}}\oplus\ldots \oplus _{a^{(s)},b^{(s)}})\rtimes(V_0^r \rtimes \bfG_\calB)$ 
is isomorphic to the character group of a $\N^{p+q}$-graded Hopf algebra $\h_{\calB,r,a,b}$, 
whose graded dual is the enveloping algebra of a fundamental deg1 pre-Lie algebra $\g_\calB$ with structure coefficients:
\begin{align*}
A^{(i,j)}&: \begin{array}{|c|c|c|}
\hline i\setminus j&1\ldots p&p+1\ldots p+r+s\\
\hline1\ldots p+r&A_j&0\\
\hline \end{array}\hspace{1cm}
A=\left( \begin{array}{c}
I_p\\\calB
\end{array}\right).\\
b^{(i,j)}&:  \begin{array}{|c|c|c|c|}
\hline i\setminus j&1\ldots p&p+1\ldots+p+r&p+r+1\ldots p+r+s\\
\hline1\ldots p+r&0&0&0\\
\hline p+r+1\ldots p+r+s&b^{(i-p-r)}_j&a^{(i-p-r)}_{j-p}&0\\
\hline \end{array}\end{align*}\end{prop}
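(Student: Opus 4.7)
My plan is to adapt the proof scheme of the previous two propositions, only checking the new features introduced by the exponential factor $e^{a\cdot P}$ and the power factor $(F/x)^b$ in the action formula.

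First, I would verify that $\hookrightarrow$ defines an action by group automorphisms of $V_0^r \rtimes \bfG_\calB$ on the additive group $V_{a,b}$. Additivity in $Q$ is immediate. For associativity, given $g = (P,F)$ and $g' = (P',G)$, the substitution $Q \mapsto Q(F, y(F/x)^\calB)$ composes exactly as in the previous proposition; the exponential satisfies $e^{a\cdot(P + F\hookrightarrow P')} = e^{a\cdot P}\cdot(F\hookrightarrow e^{a\cdot P'})$; and the power factor $((F\bullet G)_k/x_k)^{b_k}$ is handled by the telescoping identity $G_k(F,y(F/x)^\calB)/x_k = (F_k/x_k)\cdot G_k(F,\ldots)/F_k$, the same cancellation used in the monoid law of $\bfG_\calB$. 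Combining the three factors reproduces the composition of $(P,F)\bullet(P',G)$ acting on $Q$.

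Second, I would extend the torus action of $T = (\K^*)^{p+q}$ to $V_{a,b}$ by $\psi_\lambda(Q) = Q(\lambda_1 x_1,\ldots,\mu_q y_q)$, and observe that both new factors $e^{a\cdot P}$ and $(F/x)^b$ are $T$-invariant, so that $\psi_\lambda((P,F)\hookrightarrow Q) = \phi_\lambda((P,F))\hookrightarrow \psi_\lambda(Q)$. This provides a connected, finite-dimensional $\N^{p+q}$-graduation on the algebra $\h_{\calB,r,a,b}$ generated by the coordinates $X_i(\alpha)$, $Y_j(\alpha)$, and new $Z_k(\alpha)$ extracting the coefficient of $x_1^{\alpha_1}\ldots y_q^{\alpha_{p+q}}$ in the $k$-th $V_{a^{(k)},b^{(k)}}$-component. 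The coproduct dual to composition lands in the tensor square, so connectedness promotes the bialgebra to a Hopf algebra whose characters recover the whole semi-direct product.

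Third, since the composition is linear in its second variable, the graded dual carries a pre-Lie product $*$. The coefficients involving only $X$ and $Y$ are unchanged from the previous proposition. Since each $V_{a^{(k)},b^{(k)}}$-component of $g\bullet g'$ depends only additively on the second factor, the products $z\ast z'$, $y\ast z$, and $x\ast z$ all vanish, giving the zero rows indexed by $[1,p+r]$ in the $b^{(i,j)}$ table and the zero columns indexed by $[p+1,p+r+s]$ in the $A^{(i,j)}$ table. For the surviving products $z\ast y$ and $z\ast x$, I would linearize the action $(P,F)\hookrightarrow Q'_k$ to extract the coefficient of $X_j(\beta)\otimes Z_k(\alpha)$ or $Y_j(\beta)\otimes Z_k(\alpha)$ in $\Delta Z_k(\alpha+\beta)$: the substitution part contributes $A_j\cdot\alpha$ for $j\le p$, the exponential $e^{a^{(k)}\cdot P}$ contributes $a^{(k)}_{j-p}$ against $Y_{j-p}(\beta)$, and the power $(F_j/x_j)^{b^{(k)}_j}$ contributes $b^{(k)}_j$ against $X_j(\beta)$ at first order in $F_j/x_j - 1$, matching the three blocks of the statement exactly.

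The main obstacle is the bookkeeping in the last step: one must track which of the three factors contributes to each linear coefficient in the coproduct without double-counting, and verify that no stray cross-terms of the form $X\otimes Z$ or $Y\otimes Z$ appear from higher-order expansions of $e^{a\cdot P}$ or $(F/x)^b$. Such higher-order terms are products of at least two coordinate functions in the same tensor slot, hence do not contribute to the pre-Lie product, which resolves the issue and completes the identification of $\g_\calB$ with the stated fundamental deg1 pre-Lie algebra.
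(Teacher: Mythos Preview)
Your approach is correct and matches the paper's, which gives no proof beyond the sentence ``This last result is proved similarly''; you have supplied the details the paper omits. One small caution: in your third step the labels $x\ast z$, $y\ast z$, $z\ast x$, $z\ast y$ are written in the order opposite to the paper's convention (where $f_j(\beta)\ast f_i(\alpha)$ lands in the $f_i$-direction and is read off as the coefficient of $X_j(\beta)\otimes X_i(\alpha)$ in $\Delta X_i(\alpha+\beta)$), and the vanishing of what the paper would call $h_k\ast f_i$ and $h_k\ast g_j$ comes not from the $V_{a,b}$-component of $g\bullet g'$ but from the fact that the $\bfG_\calB$- and $V_0$-components of $g\bullet g'$ are independent of the $V_{a,b}$-part of $g$ --- your stated reason covers only $h\ast h=0$. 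These are labeling and attribution slips; the computations you describe are the right ones and yield the stated structure coefficients.
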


\section{SDSE associated to a family of Feynman graphs}

\subsection{Feynman graphs}

\begin{defi}
A theory of Feynman graphs $\calT$ is given by:
\begin{itemize}
\item A set $\calHE$ of types of half-edges, with an incidence rule, that is to say an involutive map $\iota:\calHE\longrightarrow \calHE$.
\item A set $\calV$ of vertex types, that is to say a set of finite multisets (in other words finite unordered sequences) 
of elements of $\calHE$, of cardinality at least $3$. 
\end{itemize}
The edges of $\calT$ are the multisets $\{t,\iota(t)\}$, where $t$ is an element of $\calHE$. The set of edges of $\calT$ is denoted by $\calE$.
\end{defi}

{\bf Examples.} \begin{enumerate}
\item In QED, $\calHE_{QED}=\{\edgephoton,\edgeelectronun,\edgeelectrondeux\}$, and the incidence rule is given by:
\begin{align*}
\edgephoton&\longleftrightarrow\edgephoton,&
\edgeelectronun&\longleftrightarrow\edgeelectrondeux.
\end{align*}
There are two edges, $\edgephoton=\{\edgephoton,\edgephoton\}$ and $\edgeelectronun=\left\{\edgeelectronun,\edgeelectrondeux\right\}$.
There is one  vertex type,  $\vertexQED=\{\edgephoton,\edgeelectronun,\edgeelectrondeux\}$.
\item In QCD, $\calHE_{QCD}=\{\edgeelectronun,\edgeelectrondeux,\edgeghostun,\edgeghostdeux,\edgegluon\}$, and:
$$\calV_{QCD}=\left\{\vertexQCDun,\vertexQCDdeux,\vertexQCDtrois,\vertexQCDquatre\right\}.$$
The incidence rule is given by:
\begin{align*}
\edgegluon&\longleftrightarrow\edgegluon,&
\edgeelectronun&\longleftrightarrow\edgeelectrondeux,&
\edgeghostun&\longleftrightarrow\edgeghostdeux.
\end{align*}
There are three edges, $\edgegluon$ (gluon), $\edgeelectronun$ (fermion) and $\edgeghostun$ (ghost).
\item Let $N\geq 3$. In $\varphi^N$, $\calE_{\varphi^N}=\{\edgephi\}$. There is only one vertex type, which is the multiset formed by $N$ copies
of $\edgephi$. There is only one edge, denoted by $\edgephi$.
\end{enumerate}

\begin{defi}
Let $\calT=(\calHE,\calV,\iota)$ be a theory of Feynman graphs. A 1PI graph $G$ of the theory $\calT$ is given by:
\begin{itemize}
\item A nonempty, finite set $HE$ of half-edges, with a map $type:HE\longrightarrow \calHE$.
\item A nonempty, finite set $V$ of vertices.
\item An incidence map for half-edges, that is to say an involution map $i:HE\longrightarrow HE$.
\item A source map for half-edges, that is to say a map $s:HE\longrightarrow V$.
\end{itemize}
The following conditions must be satisfied:
\begin{enumerate}
\item (Respect of the incidence rule) for any $e\in HE$ such that $i(e)\neq e$, $\iota(type(e))=type(i(e))$.
\item (Respect of the vertex types) for any $v \in V$, the multiset $type(v)=\{type(e)\mid s(e)=v\}$ belongs to $\calV$.
\item (Connectivity and one-particule irreducibility) the set of internal edges of $G$ is:
$$Int(G)=\{\{e,i(e)\}\mid e\in HE,i(e)\neq e\}.$$ 
The source map makes $(V,Int(G))$ a graph. This graph is 1-PI,
that is to say that it is connected and remains connected if one edge $e\in Int(G)$ is deleted.
\item (External structure) the set of external half-edges f $G$ is:
$$Ext(G)=\{e \mid e\in HE, i(e)=e\}.$$ 
We define $typeExt(G)$ as the multiset $\{type(e)\mid e\in Ext(G)\}$. Two case are possible:
\begin{enumerate}
\item $typeExt(G)=\{t_1,t_2\}$, with $\iota(t_1)=t_2$. In this case, we shall say that the external structure of $G$ is of type edge $typeExt(G)$.
\item  $typeExt(G)\in \calV$. In this case, we shall say that the external structure of $G$ is of type vertex $typeExt(G)$.
\end{enumerate} \end{enumerate}
A Feynman graph is the disjoint union of a finite number (possibly $0$) 1-PI Feynman graphs, called its connected components.
The set of Feynman graphs of the theory $\calT$ is denoted by $\fg_\calT$.
\end{defi}

We shall only consider theories such that there exists 1-PI Feynman graphs for all type of external structures.\\

{\bf Examples.} \begin{enumerate}
\item Here are examples of 1-PI Feynman graphs in QED.
$$\begin{array}{|c|c|}
\hline \mbox{External structure}&\mbox{Examples}\\
\hline&\\ \vertexQED&\QEDun,\QEDcinq,\QEDsept,\QEDhuit,\QEDdix,\QEDonze\\  &\\
\hline& \\ \edgephoton&\QEDdeux,\QEDquatre,\QEDquatorze\\
\hline \edgeelectronun&\QEDtrois,\QEDdouze,\QEDtreize\\
\hline \end{array}$$
\item Here are examples of 1-PI Feynman graphs in QCD.
$$\begin{array}{|c|c|}
\hline \mbox{External structure}&\mbox{Examples}\\
\hline&\\ \vertexQCDun&\QCDun,\QCDsept,\QCDdix,\QCDonzebis\\ &\\
\hline&\\ \vertexQCDdeux&\QCDunbis,\QCDcinq,\QCDhuit,\QCDonze\\ &\\
\hline&\\ \vertexQCDtrois&\QCDunter\\ &\\
\hline&\\ \vertexQCDquatre&\QCDquinze,\QCDseize\\ &\\
\hline&\\ \edgegluon&\QCDdeux,\QCDdeuxbis,\QCDquatre,\QCDquatorze\\ 
\hline \end{array}$$
$$\begin{array}{|c|c|}
\hline \mbox{External structure}&\mbox{Examples}\\
\hline&\\ \edgeelectronun&\QCDtrois,\QCDdouze,\QCDtreize\\
\hline&\\ \edgeghostun&\QCDtroisbis,\QCDdouzebis,\QCDtreizebis\\
\hline \end{array}$$
\item Here are examples of 1-PI Feynman graphs in $\varphi^3$.
$$\begin{array}{|c|c|}
\hline \mbox{External structure}&\mbox{Examples}\\
\hline&\\ \vertexphi&\phiun,\phicinq,\phisept,\phihuit,\phidix,\phionze\\ &\\
\hline&\\  \edgephi&\phideux,\phiquatre,\phiquatorze\\
\hline \end{array}$$
\end{enumerate}

\begin{defi}\begin{enumerate}
\item Let $G=(HE,V,i,s)$ and $G'=(HE',V',i',s')$ be two Feynman graphs of a theory $\calT$. We shall say that $G'$ is a subgraph of $G$ if:
\begin{enumerate}
\item $HE'\subseteq HE$, $V'=s(HE')$ and $s'=s_{\mid HE'}$.
\item For any $e\in HE'$, $i'(e)=e$ or $i'(e)=i(e)$.
\end{enumerate}
\item Let $G'$ be a connected subgraph of $G$. We define a structure $G/G'=(HE'',V'',i'',s'')$ in the following way:
\begin{itemize}
\item If the type of the external structure of $G'$ is a vertex:
\begin{enumerate}
\item $HE''=(HE\setminus HE')\sqcup Ext(G')$.
\item $V''=(V\setminus V') \sqcup \{0\}$.
\item For all $e\in HE''$:
$$s''(e)=\begin{cases}
s(e)\mbox{ if }e\in HE\setminus HE',\\
0\mbox{ if }e\in Ext(G').
\end{cases}$$
\item For all $e\in HE''$, $i''(e)=i(e)$.
\end{enumerate}
\item If the type of the external structure of $G'$ is an edge, let us denote by $e_1$ and $e_2$ its two external half-edges.
\begin{enumerate}
\item $HE''=HE\setminus HE'$.
\item $V''=V\setminus V'$.
\item For all $e\in HE''$, $s''(e)=s(e)$.
\item For all $e\in HE''$:
$$i''(e)=\begin{cases}
i(e_2)\mbox{ if }e=i(e_1),\\
i(e_1)\mbox{ if}e=i(e_2),\\
i(e)\mbox{ otherwise.}
\end{cases}$$
\end{enumerate} \end{itemize} 
If $G'$ is not connected, we put $G'=G'_1\ldots G'_k$ its decomposition into connected parts, and define $G/G'=(\ldots (G/G'_1)/G'_2)\ldots)/G'_k$.
It does not depend of the order chosen on the connected components of $G'$.
\item If $G/G'$ is a Feynman graph, we shall say that $G'$ is an admissible subgraph and we shall write $G'\subseteq G$.
\end{enumerate} \end{defi}

Roughly speaking, $G/G'$ is obtained by deleting $G'$ from $G$ and contracting the hole which appeared until it vanishes.
By convention, $G/G=1$ and $G/1=G$. Observe that if $G'\subsetneq G$ and $G$ is 1-PI, then $G/G'$ is also 1-PI, with the same 
external structure as $G$.\\

The set $\fg(\calT)$ is a basis of the Hopf algebra $\algfg$ associated to a theory $\calT$ of Feynman graphs.
Its product is given by the disjoint union of Feynman graphs; its coproduct is given by:
$$\forall G\in \fg(\calT),\: \Delta(G)=\sum_{G'\subseteq G} G'\otimes G/G'.$$

{\bf Examples.} In QED:
\begin{align*}
\Delta \QEDonze&=\QEDonze\otimes 1+1\otimes \QEDonze+\QEDdeux\otimes\QEDun,\\
\Delta \QEDcinq&=\QEDcinq\otimes 1+1\otimes \QEDcinq+\QEDun\otimes\QEDun,\\
\Delta \QEDquatre&=\QEDquatre\otimes 1+1\otimes \QEDquatre+2 \QEDun\otimes \QEDdeux.
\end{align*}

\begin{defi} 
Let $G$ be a Feynman graph of a given theory $\calT$. The loop number of $G$ is:
$$\ell(G)=\sharp Int(G)-\sharp Vert(G)+\sharp\{\mbox{connected components of $G$}\}.$$
Note that because of the $1$-PI condition, for all nonempty graph $G$, $\ell(G)\geq 1$.
\end{defi}
We shall prove afterwards that the loop number defines a connected $\N$-graduation of the Hopf algebra $\algfg$.

\subsection{Graduations}

Let us fix a theory $\calT=(\calHE,\calV,\iota)$. We look for graduations of the Hopf algebra $\algfg$. We shall use the following notions:

\begin{defi}\label{deficombi}
\begin{enumerate}
\item The incidence matrix of $\calT$ is the matrix $A_\calT=(a_{e,v})_{e\in \calHE,v\in\calV}$, where $a_{e,v}$ is the multiplicity of $e$
in the multiset $v$. 
\item The reduced incidence matrix of $\calT$ is the matrix $A'_\calT=(a'_{e,v})_{e\in \calE, v\in \calV}$, where:
$$a'_{e,v}=\begin{cases}
\displaystyle \frac{a_{e_1,v}}{2}\mbox{ if } e=\{e_1,e_1\},\\[2mm]
\displaystyle \frac{a_{e_1,v}+a_{e_2,v}}{2}\mbox{ if } e=\{e_1,e_2\} \mbox{ with }e_1\neq e_2.
\end{cases}$$ 
\item Let $G\in \fg(\calT)$. We define four vectors related to $G$:
\begin{enumerate}
\item $V_G=(v_t(G))_{t\in \calV}$, where $v_t(G)$ is the number of vertices $v$ of $G$ such that $type(v)=t$.
\item $E_G=(he_t(G))_{t\in \calHE}$, where $he_t(G)$ is the number of half-edges $e$ of $G$ such that $type(e)=t$.
\item $E'_G=(e_t(G))_{t\in \calE}$, where $e_t(G)$ is the number of internal edges $e$ of $G$ such that $type(e)=t$.
\item $S_G=(s_t(G))_{t\in \calV\sqcup\calE}$, where $s_t(G)$ is the number of connected components of $G$ of external structure $t$.
\end{enumerate}
\end{enumerate}\end{defi}

{\bf Examples}. 
\begin{align*}
A_{QED}&=\left(\begin{array}{c}
1\\1\\1
\end{array}\right),&
A_{QCD}&=\left(\begin{array}{cccc}
1&0&0&0\\1&0&0&0\\0&1&0&0\\0&1&0&0\\1&1&3&4
\end{array}\right),&
A_{\varphi^n}&=(n). \\
A'_{QED}&=\left(\begin{array}{c}
1\\ \frac{1}{2}
\end{array}\right),&
A'_{QCD}&=\left(\begin{array}{cccc}
1&0&0&0\\ 0&1&0&0\\ \frac{1}{2}&\frac{1}{2}&\frac{3}{2}&2
\end{array}\right),&
A'_{\varphi_n}&=\left(\frac{n}{2}\right).
\end{align*}

\begin{prop}\label{prop37} Let $G\in \fg(\calT)$. Then:
\begin{enumerate}
\item $E_G=A_\calT V_G$.
\item $E'_G=A'_\calT V_G-(A'_\calT\: Id)S_G$.
\item The number of external half-edges of $G$ is $(1\ldots 1)(A_\calT\: 2Id)S_G$.
\item The loop number of $G$ is:
$$\ell(G)=\left(\frac{(1\ldots 1)A_\tau}{2}-(1\ldots 1)\right)V_G-\left((1\ldots 1)\left(\frac{A_\calT}{2}\: 0\right)-(1\ldots 10\ldots 0)\right)S_G.$$
\end{enumerate}\end{prop}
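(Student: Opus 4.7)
The proof is essentially a careful double-counting argument, broken into the four parts of the statement.

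The plan is to start with assertion (1), which is immediate: summing over vertices, the total number of half-edges of type $t\in\calHE$ is $\sum_{v\in\calV} a_{t,v}\,v_t(G)$, i.e.\ the $t$-component of $A_\calT V_G$. This requires only the definition of $a_{t,v}$ as the multiplicity of $t$ in the multiset $v$.

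For assertion (2), I would count, for each edge type $e=\{e_1,e_2\}\in\calE$, the number of half-edges of type $e_1$ or $e_2$ in $G$. This count equals (by (1)) $(a_{e_1,v}+a_{e_2,v})V_G$ when $e_1\neq e_2$ and $a_{e_1,v}V_G$ when $e_1=e_2$. On the other hand, each internal edge of type $e$ uses exactly two of these half-edges, while each external half-edge of type $e_1$ or $e_2$ uses one. Solving for $e_t(G)$ and using that $a'_{e,v}$ is precisely $\tfrac12$ of the above coefficients produces $e_t(G)=a'_{e,v}V_G-\tfrac12(\text{external half-edges of type }e_1\text{ or }e_2)$. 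The key observation, crucial for identifying the subtracted term with $(A'_\calT\; Id)S_G$, is that because $\iota$ is an involution, the only external \emph{edge} structure that contributes half-edges of type $e_1$ or $e_2$ is $e$ itself, contributing $2$ such half-edges; meanwhile, the external \emph{vertex} structures of type $v$ contribute $a_{e_1,v}+a_{e_2,v}$ (or $a_{e_1,v}$ if $e_1=e_2$) half-edges, so half of their total is exactly $(A'_\calT V_{ext})_e$.

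Assertion (3) follows directly by classifying external structures: a connected component with external structure of vertex type $v$ has $\sum_{e\in\calHE}a_{e,v}$ external half-edges, while one with external structure of edge type contributes exactly $2$ external half-edges. Summing and using the row-vector $(1,\dots,1)A_\calT$ yields the stated formula.

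For assertion (4), I would combine the loop formula $\ell(G)=\sharp Int(G)-\sharp V(G)+\sharp\{\text{components}\}$ with (2), using $\sharp Int(G)=(1,\dots,1)E'_G$, $\sharp V(G)=(1,\dots,1)V_G$ and $\sharp\{\text{components}\}=(1,\dots,1)S_G$. The computation then reduces to the identity
\[
(1,\dots,1)A'_\calT=\tfrac12\,(1,\dots,1)A_\calT,
\]
which is immediate from the definition of $A'_\calT$ (each column of $A'_\calT$ sums to half the corresponding column of $A_\calT$, because each half-edge type appears in exactly one edge). Substituting and collecting terms gives the stated expression. The only delicate point, and the real obstacle, is the bookkeeping in (2) — handling uniformly the loop-edge case $e=\{e_1,e_1\}$ and the non-loop case, and checking that the contribution of the external edge structures collapses exactly to the identity block in $(A'_\calT\;Id)$.
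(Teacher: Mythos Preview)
Your argument is correct; the double-counting for (1)--(3) is exactly what the paper has in mind when it dismisses them as ``easy results of graph theory''. Your treatment of (2), in particular the case split on $e=\{e_1,e_1\}$ versus $e=\{e_1,e_2\}$ and the observation that no other external edge structure can contribute half-edges of type $e_1$ or $e_2$, is the right way to justify the identity block in $(A'_\calT\;Id)$.

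For (4) there is a small but genuine difference of route. You compute $\sharp Int(G)=(1,\ldots,1)E'_G$ directly from (2) and then invoke the column identity $(1,\ldots,1)A'_\calT=\tfrac12(1,\ldots,1)A_\calT$. The paper instead bypasses (2) for this step: it obtains $\sharp Int(G)$ as $\tfrac12\bigl((1,\ldots,1)E_G-\text{(external half-edges)}\bigr)$, using (1) and (3). Both computations land on the same expression after simplification; your route has the advantage of exercising (2), while the paper's route avoids needing the column identity for $A'_\calT$ explicitly (it is hidden inside the passage from $(A_\calT/2\;Id)$ to $(A_\calT/2\;0)$). Either way the bookkeeping is the same in spirit, and there is no gap in your version.
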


\begin{proof} The first three points are easy results of graph theory. 
The number of connected components of $G$ is $(1\ldots 1)S_G$; the number of external half-edges of $G$ is given, from the third point, by
$(1\ldots 1)(A_\calT\: 2Id)S_G$. Hence, the number of internal edges of $G$ is given by:
$$\frac{(1\ldots 1)E_G-(1\ldots 1)(A_\calT\: 2Id)S_G}{2}.$$
The loop number of $G$ is consequently given by:
\begin{align*}
\ell(G)&=\frac{(1\ldots 1)E_G-(1\ldots 1)(A_\calT\: 2Id)S_G}{2}-(1\ldots 1)V_G+(1\ldots 1)S_G\\
&=\left(\frac{(1\ldots 1)A_\tau}{2}-(1\ldots 1)\right)V_G-\left((1\ldots 1)\left(\frac{A_\calT}{2}\: Id\right)-(1\ldots 1)\right)S_G\\
&=\left(\frac{(1\ldots 1)A_\tau}{2}-(1\ldots 1)\right)V_G-\left((1\ldots 1)\left(\frac{A_\calT}{2}\: 0\right)-(1\ldots 10\ldots 0)\right)S_G,
\end{align*}
which proves the last point. \end{proof}\\

We now look for $\mathbb{Q}^N$-graduations of the Hopf algebra $\algfg$, which only depend on the combinatorial datas
of definition \ref{deficombi}-3. According to proposition \ref{prop37}, for such a graduation,
there exists a map $f:\N^{|\calV|}\times \N^{|\calV|+|\calE|}\longrightarrow\mathbb{Q}^N$, such that for any graph $G$, $deg(G)=f(V_G,S_G)$.

\begin{prop} \label{propgraduation}
Let $f:\N^{|\calV|}\times \N^{|\calV|+|\calE|}\longrightarrow\mathbb{Q}^N$. We consider the $\mathbb{Q}^N$-graduation of $\algfg$ defined
by $deg(G)=f(V_G,S_G)$. It is a Hopf algebra graduation if, and only if, there exists $C \in M_{N,|\calV|}(\mathbb{Q})$ such that
for any Feynman graph $G$: $$deg(G)=CV_G-(C\: 0)S_G.$$
\end{prop}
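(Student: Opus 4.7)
The approach is to verify both directions. For the \emph{if} direction, I would first record two combinatorial identities governing the contraction: for every admissible subgraph $G'\subseteq G$,
$$V_{G/G'}=V_G-V_{G'}+\pi(S_{G'}),\qquad (C\:0)\,S_{G/G'}=(C\:0)\,S_G,$$
where $\pi:\mathbb{Q}^{|\calV|+|\calE|}\to\mathbb{Q}^{|\calV|}$ is the projection onto the $\calV$-coordinates. The first identity reflects that contracting a connected subgraph with vertex-type external structure $t$ removes its vertices and reinserts a fresh vertex of type $t$, while an edge-type contraction adds no new vertex; the second holds because any discrepancy between $S_{G/G'}$ and $S_G$ can only live in the $\calE$-columns, which $(C\:0)$ annihilates. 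Together with the trivial relation $(C\:0)\,S_{G'}=C\pi(S_{G'})$, a direct substitution yields $deg(G')+deg(G/G')=deg(G)$. Multiplicativity under disjoint union is immediate since $V_{\cdot}$ and $S_{\cdot}$ are additive.

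For the \emph{only if} direction, I would use condition (a) (compatibility with the product) to promote $f$ to a $\mathbb{Q}$-linear map $\widetilde f:L\to\mathbb{Q}^N$ defined on the rational span $L\subseteq\mathbb{Q}^{|\calV|+|\calV|+|\calE|}$ of the realizable pairs $(V_G,S_G)$; well-definedness on the Grothendieck group of realizable pairs is automatic from additivity. Now apply condition (b) to a 1-PI connected graph $G$ with a proper 1-PI connected subgraph $G'$: subtracting the two realizable pairs gives $(V_G,S_G)-(V_{G/G'},S_{G/G'})=(V_{G'}-\pi(S_{G'}),0)\in L$, and linearity combined with (b) yields
$$\widetilde f\bigl(V_{G'}-\pi(S_{G'}),0\bigr)=f(V_{G'},S_{G'}).$$
Define $C\in M_{N,|\calV|}(\mathbb{Q})$ to be any $\mathbb{Q}$-linear extension to $\mathbb{Q}^{|\calV|}$ of the partial map $v\mapsto\widetilde f(v,0)$ from $L_V:=\{v\in\mathbb{Q}^{|\calV|}\mid(v,0)\in L\}$; the displayed identity then reads $f(V_{G'},S_{G'})=CV_{G'}-(C\:0)S_{G'}$ for every 1-PI connected $G'$ admitting a proper 1-PI embedding, and the formula propagates to arbitrary Feynman graphs by additivity.

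The main obstacle is ensuring that condition (b) is applicable for every 1-PI connected $G'$: one needs each such $G'$ to arise as a proper 1-PI subgraph of some larger 1-PI Feynman graph of the theory. In the QFT setting of the paper this is guaranteed by inserting $G'$ at a vertex or edge of matching type inside an ambient 1-PI graph, which exists by the standing assumption that each external-structure type is realized. A secondary bookkeeping point is that $S_{G/G'}$ may differ from $S_G$ precisely when $G'$ swallows an entire edge-type 1-PI component of $G$; since any such discrepancy lives in the $\calE$-columns where $(C\:0)$ vanishes, it is absorbed harmlessly in both directions of the argument.
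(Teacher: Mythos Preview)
Your argument is correct. The paper's proof takes a slightly different, more direct route: from product compatibility it writes $deg(G)=CV_G+DS_G$ for some matrices $C,D$ (treating additivity as giving a linear form outright), and then, using $V_{G/G'}=V_G-V_{G'}+(Id\:0)S_{G'}$ and $S_{G/G'}=S_G$, computes
\[
deg(G)-deg(G')-deg(G/G')=-(D+(C\:0))S_{G'}.
\]
Since diagrams exist for every external structure, $S_{G'}$ ranges over a basis and this forces $D=-(C\:0)$. Your approach instead avoids positing $D$ at all: you use the coproduct identity to show $\widetilde f(V_{G'}-\pi(S_{G'}),0)=f(V_{G'},S_{G'})$ and then \emph{define} $C$ as an extension of $v\mapsto\widetilde f(v,0)$. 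This is more careful about the domain of linearity (you never assume $f$ extends to a global linear map on $\mathbb{Q}^{|\calV|}\times\mathbb{Q}^{|\calV|+|\calE|}$), at the cost of needing the embedding argument that every 1-PI connected $G'$ sits properly inside some larger 1-PI graph. Two minor remarks: in the regime you actually use (1-PI connected $G$, proper $G'\subsetneq G$), the paper asserts $S_{G/G'}=S_G$ on the nose, so your hedge about discrepancies in the $\calE$-columns is unnecessary there; and your embedding step is the exact counterpart of the paper's use of the standing assumption that each external structure is realized.
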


\begin{proof} Let $G$ and $G'$ be two graphs. Then $V_{GG'}=V_G+V_{G'}$ and $S_{GG'}=S_{G}+S_{G'}$. 
Consequently, the graduation respects the product if, and only if, for all $G,G'$:
$$f(V_G+V_{G'},S_G+S_{G'})=f(V_G,S_G)+f(V_{G'},S_{G'}).$$
that is to say if, and only if, $f$ is additive. Hence, $f$ gives a graduation of the algebra $\algfg$ if, and only if,
there exists $C\in M_{N,|\calV|}(\mathbb{Q})$ and $D\in M_{N,|\calV|+|\calE|}(\mathbb{Q})$ such that for any Feynman graph $G$, $deg(G)=CV_G+DS_G$.\\

Let $G'\subseteq G$. By definition of $G''=G/G'$, $V_{G''}=V_G-V_{G'}+(Id\: 0)S_{G'}$ and $S_{G''}=S_G$. Hence:
\begin{align*}
deg(G)&=CV_G+DS_G\\
&=CV_{G''}+CV_{G'}-C(Id\:0)S_{G'}+DS_{G''}\\
&=deg(G')+deg(G'')-(D+C(Id \: 0))S_{G'}.
\end{align*}
So $f$ gives a graduation of $\algfg$ if, and only if, for all subdiagram $G'\subseteq G$, $(D+C(Id \: 0))S_{G'}=0$.
As there exists diagrams for any external structure, we can choose $G$ and $G'$ such that $S_{G'}$ is the $i$-th vector of the canonical basis;
hence, we have a graduation of $\algfg$ if, and only if, $D+C(Id \: 0)=0$. \end{proof}\\

Consequently, any matrix $C \in M_{N,|\calV|}(\mathbb{Q})$ defines a $\mathbb{Q}^N$-graduation of the Hopf algebra $\algfg$.
This of course may be not a $\N^N$-graduation, or may be not connected. \\

{\bf Examples.} \begin{enumerate}
\item The loop number $\ell$ gives a Hopf algebra $\N$-graduation with $\displaystyle C_\ell=\frac{(1\ldots 1)A_\tau}{2}-(1\ldots 1)$.
This is a connected $\N$-graduation, as we only consider 1-PI graphs.
\item Let $t \in \calE_\calT$, and let $C$ be the $t$-th row of $A'_\calT$; the associated graduation is noted $deg_t$.
For all $G\in \fg(\calT)$, $deg_t(G)=e_t(G)+s_t(G)$. This is a $\N$-graduation, which may be not connected.
\item If $deg$ and $deg'$ are two graduations of the Hopf algebra $\algfg$, then $deg\oplus deg'$ defined by $deg\oplus deg'(G)=(deg(G),deg'(G))$
is also a graduation of $\algfg$. If $deg$ and $deg'$ are respectively given by $C$ and $C'$, 
$deg\oplus deg'$ is given by $\left(\begin{array}{c}C\\C'\end{array}\right)$.
\end{enumerate}

\subsection{Insertions}

\begin{defi}
Let $G$ and $G'$ be two Feynman graphs of a theory $\calT$.
\begin{enumerate}
\item  We denote by $G'_1,\ldots,G'_k$ the connected components of $G'$. 
A place of insertion $f$of  $G'$ into $G$ is given by:
\begin{enumerate}
\item for all $G'_i$ of external structure of type a vertex $t$, a pair $(v_i,f_i)$, where $v_i$ is a vertex of $G$ of type $t$, and $f_i$ a bijection 
from the set of external edges of $\bfG_i$ to the set of half-edges $e$ of $G'$ such that $s(e)=t$, compatible with the type, that is to say 
$type(f_i(e'))=type(e')$ for all $e'$. Moreover, if $G'_i$ and $G'_j$ are both of external structure of type $t$, with $i\neq j$, then $v_i\neq v_j$.
\item for all $G'_i$ of external structure of type an edge $t$, a pair $(e_i,f_i)$ where $e_i=\{e_i^{(1)},e_i^{(2)}\}$ is an internal edge of $G$
of type $t$, and $f_i$ a bijection from the set of the two external half-edges of $t$ into $\{e_i^{(1)},e_i^{(2)}\}$.
\item For all internal edge $e$ of $G$, the set of components $\bfG_i$ such that $e_i=e$ is totally ordered.
\end{enumerate}
Note that the set of places of insertion of $G'$ into $G$ is finite and may be empty. Its cardinality is denoted $ins(G',G)$.
\item Let $F$ be a place of insertion of $G'$ into $G$. The insertion $G' \insere{F}G$ is the Feynman graph obtained in this way:
\begin{enumerate}
\item For all $G'_i$ of external structure of type vertex, delete $v_i$ and all the half-edges $e$ such that $s(e)=v_i$; then glue
each external edge $e'$ of $G'_i$ to $i(f_i(e'))$ if if is not equal to $f_i(e')$; otherwise, $e'$ becomes an external edge.
\item For each internal edge $e$, such that there exists components $G'_i$ with $e_i=e$, first separate the two half-edges constituing this internal edge; 
then insert all these components $G'_i$, following their total order, by gluing their external edges with the two open half-edges according to $f_i$.
\end{enumerate}\end{enumerate}\end{defi}

For any Feynman graph such that $ins(G',G)\neq 0$, we put:
$$B_G(G')=\frac{1}{ins(G',G)} \sum_F G'\insere{F}G.$$

\begin{prop}\begin{enumerate}
\item For all graph $G$, the space $I_G=Vect(G',ins(G',G)\neq 0)$ is a left comodule.
\item For all primitive graph $G$, for all $x \in I_G$:
$$\Delta\circ B_G(x)=B_G(x)\otimes 1+(Id \otimes B_G)\circ \Delta(x).$$
\item We define a graduation on $\algfg$ with the help of a matrix $C \in M_{N,|\calV|}(\mathbb{Q})$. Then for any Feynman graph $G$,
$B_G$ is homogeneous of degree $deg(G)$.
\end{enumerate}\end{prop}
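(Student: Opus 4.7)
The plan is to handle the three items in order: (1) follows from the structure of the coproduct, (2) is the technical cocycle computation, and (3) is a direct degree-count using Proposition \ref{propgraduation}.

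For (1), I would fix $G' \in I_G$ and check, term by term in $\Delta(G') = \sum_{G''\subseteq G'} G'' \otimes G'/G''$, that each right factor $G'/G''$ still lies in $I_G$. The key observation is that quotienting a connected component $C$ of $G'$ by its intersection $C'' = G'' \cap C$ with an admissible subgraph preserves its external structure type, since the external half-edges of $C/C''$ are exactly those of $C$ (contraction is local and respects edge/vertex external-structure types). Consequently any place of insertion $F$ of $G'$ into $G$ restricts naturally to a place of insertion of $G'/G''$ into $G$ (use the same vertices/edges of $G$, and transport the bijections on the preserved external half-edges), so $ins(G'/G'',G) \neq 0$ and the coproduct indeed lands in $\algfg \otimes I_G$.

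For (2), I would expand $\Delta \circ B_G(x)$ by writing each summand of $B_G(x)$ as $G'\insere{F}G$ and then classify admissible subgraphs $H$ of $G'\insere{F}G$. Relative to the inserted copy of $G'$, each such $H$ decomposes into a piece $H_1 \subseteq G'$ (admissible in $G'$) and a residual admissible subgraph $H_2$ of $G$, the latter obtained by contracting the inserted $G'$ inside $H$. Since $G$ is primitive, $H_2 \in \{\emptyset, G\}$. The case $H_2 = G$ forces $H = G' \insere{F}G$ and contributes $B_G(x)\otimes 1$. The case $H_2 = \emptyset$ gives $H = H_1 \subseteq G'$ and $(G'\insere{F}G)/H_1 = (G'/H_1)\insere{F'}G$ for a canonically induced place of insertion $F'$. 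Summing over the pairs $(F,H_1)$ and regrouping by $(H_1, G'/H_1)$ produces $(Id \otimes B_G)\circ \Delta(x)$.

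For (3), I would apply Proposition \ref{propgraduation}. Writing $H = G' \insere{F} G$, the insertion deletes one vertex of $G$ of type $t$ for each connected component of $G'$ whose external structure is of type a vertex $t$, and introduces in exchange the vertices of that component; it does not alter the number or types of the connected components of external structure. Therefore $V_H = V_G + V_{G'} - (Id\: 0)S_{G'}$ and $S_H = S_G$, and applying $deg = CV - (C\: 0)S$ gives $deg(H) = deg(G) + CV_{G'} - (C\: 0)S_{G'} = deg(G) + deg(G')$, so $B_G$ is homogeneous of degree $deg(G)$.

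The main obstacle is the multiplicity bookkeeping in step (2): one must verify that the normalizing factor $1/ins(G',G)$ in the definition of $B_G$ combines with the natural correspondence between places of insertion of $G'$ into $G$ and pairs (place of insertion of the quotient $G'/H_1$ into $G$, place of insertion of $H_1$ into the relevant component pieces) to produce exactly the $1/ins(G'/H_1,G)$ appearing in $(Id\otimes B_G)\circ\Delta(x)$. This double-counting is the standard but delicate step in all cocycle proofs for insertion operators on Feynman-graph Hopf algebras, and care is needed when several connected components of $G'$ share the same external structure type, which introduces the ordering datum built into the definition of a place of insertion.
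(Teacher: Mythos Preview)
Your proposal is correct and follows essentially the same approach as the paper. In all three parts the argument matches: (1) external structures of components are preserved under contraction so insertion places persist; (2) subgraphs $H$ of $G'\insere{F}G$ are dichotomized by whether they touch edges of $G$, with primitivity forcing $H$ to be the whole graph in the first case and $H\subseteq G'$ in the second; (3) the vertex and external-structure counts are computed exactly as you do. The normalization issue you flag in (2) --- that the correspondence $F\mapsto F'$ together with $ins(G',G)=ins(G'/H_1,G)$ makes the coefficients match --- is indeed the delicate point, and the paper's proof is equally brief about it, relying implicitly on the fact that $G'/H_1$ has the same component-by-component external structure as $G'$.
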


\begin{proof}  1. Let $G$ and $G'$ be two graphs. Then $G'\in I_G$ if, and only if, the two following conditions hold:
\begin{itemize}
\item For any $t\in \calV$, $s_t(G')\leq v_t(G)$.
\item For any $t\in \calE$, $(s_t(G') \geq 1)\Longrightarrow (e_t(G')\geq 1)$.
\end{itemize}
Consequently, if $G'\in I_G$ and $G''\subset G'$, noting that $s_t(G'/G'') \leq s_t(G')$ for all $t\in \calV \sqcup \calE$,  then $G'/G'' \in I_G$. 
So $I_G$ is a left comodule. \\

2. Let $G' \in \fg(\calT)$, such that $ins(G',G)\neq 0$. As $G$ is primitive, $\Delta(G)=G=\otimes 1+1\otimes G$, so $G$ has no proper subgraph. 
For all insertion place $f$, let us consider a subgraph $H$ of $G''=G'\insere{f}G$. If $H$ contains internal edges of $G''$ which does not belong to $G'$, 
as $G'$ has no proper subgraph, it contains all the edges of $G$, and, as $H$ is a subgraph, it is equal to $G$.
Otherwise, $H$ is a subgraph of $G'$, and then $G''/H=G'/H \insere{f'} G$ for a particular $F'$. Summing, we obtain:
\begin{align*}
\Delta(B_G(G'))&=\frac{1}{ins(G',G)} \sum_{f} \left(G'\insere{f}G\otimes 1+\sum_{H\subseteq G'}
H' \otimes G'/H\insere{f'}G\right)\\
&=B_G(G')\otimes 1+\sum_{H\subseteq G'} H\otimes B_G(G'/H)\\
&=B_G(G')\otimes 1+(Id\otimes B_G)\circ \Delta(G').
\end{align*} 
By linearity, the result holds for all $x \in I_G$. \\

3. Let $G$ and $G'$ be Feynman graphs, and $G''=G'\insere{F} G$. Then $S_{G''}=S_G$ and $V_{G''}=V_G+V_{G'}-(Id \: 0)S_{G'}$.
Hence:
\begin{align*}
deg(G'')&=CV_G+CV_{G'}-C(Id\:0)S_{G'}-(C\: 0)S_G\\
&=CV_G-(C\: 0)S_G+CV_{G'}-(C\:0)S_{G'}\\
&=deg(G)+deg(G').
\end{align*}
So $B_G$ is homogeneous of degree $deg(G)$. \end{proof}

\subsection{SDSE associated to a theory of Feynman graphs}

Let $\calT$ be a family of Feynman graphs. We put $\calV=\{t_1,\ldots,t_k\}$, $\calE=\{t_{k+1},\ldots,t_{k+l}\}$ and $M=k+l$.
We choose a connected $\N^N$-graduation of $\algfg$ given by a $N\times k$ matrix $C$. In order to ease the notation,
for all $i\in [k]$, we put $v_{t_i}(G)=v_i(G)$ and for all $k+1\leq j\leq k+l$, $e_{t_j}(G)=e_j(G)$, for any Feynman graph $G$. \\

{\bf Notations.}\begin{enumerate}
\item For each $i$, we denote by $P_i$ the set of primitive 1-PI Feynman graphs of the theory $\calT$ of external structure of type $t_i$.
\item Let $\alpha_1,\ldots,\alpha_N$ be $N$ indeterminates (the coupling constants). For any graph $G$, if $deg(G)=(d_1,\ldots,d_N)$,
we put $\alpha^{deg(G)}=\alpha_1^{d_1}\ldots \alpha_N^{d_N}$.
\end{enumerate}

We consider the following SDSE on $\algfg$: 
\begin{align*}
(S_\calT):\: \forall i\in [M],\: X_i&=\sum_{G\in P_i} \alpha^{deg(G)} B_G\left(\prod_{j=1}^k (1+X_j)^{v_i(G)}\prod_{j=k+1}^{k+l}(1-X_j)^{-e_j(G)}\right).
\end{align*}
We decompose $X_i$ according to the powers of the $\alpha_i$:
$$X_i=\sum_{d \in \NN} \alpha^dX_i(d).$$
It is not difficult to show that $X_i(d)$ is homogeneous of degree $d$, as $B_G$ is homogeneous of degree $deg(G)$.
The subalgebra generated by the $X_i(d)'s$ is denoted by $\h_{(S_\calT)}$. \\

Combinatorially, $X_i$ is a span of all connected graph of external structure of type $t_i$; its homogeneous components can be inductively computed
by taking all possible insertions of already computed homogeneous components of $X_j$ into primitive Feynman graphs of the good external structure,
in order to obtain the expected degree.\\

We lift this SDSE to the level of rooted trees. The set of decorations is the set of primitive connected Feynman graphs:
$$P=\bigsqcup_{i=1}^{k+l} P_i.$$
The graduation of $\h_{CK}^D$ is given by the degree of primitive Feynman graphs, and we consider the SDSE  on $\h_{CK}^D$:
\begin{align*}
(S'_\calT):\: \forall i\in [M],\:Y_i&=\sum_{G\in P_i} B_G\left(\prod_{j=1}^k (1+Y_j)^{v_i(G)}\prod_{j=k+1}^{k+l}(1-Y_j)^{-e_j(G)}\right).
\end{align*}
The homogeneous component of $Y_i$ of degree $d$ is denoted by $Y_i(d)$ and the subalgebra of $\h_{CK}^D$ generated by the $Y_i(d)$
is denoted by  $\h_{(S'_\calT)}$. 

\begin{prop}
If $\h_{(S'_\calT)}$ is a Hopf subalgebra of $\h_{CK}^D$, then $\h_{(S_\calT)}$ is a Hopf subalgebra of $\algfg$, and the algebra morphism defined by
$Y_i(d)\longrightarrow X_i(d)$ is a surjective Hopf algebra morphism from $\h_{(S'_\calT)}$ to $\h_{(S_\calT)}$.
\end{prop}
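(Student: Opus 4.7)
The plan is to construct, via the universal property of the Connes-Kreimer Hopf algebra, a graded Hopf algebra morphism $\Phi:\h_{CK}^D\longrightarrow \algfg$ (where $D=P$ is the set of primitive connected Feynman graphs) that sends $Y_i(d)$ to $X_i(d)$; the conclusion will then follow because the image of a Hopf subalgebra under a Hopf morphism is a Hopf subalgebra. In the universal property, I would take the operators $L_G:=B_G$ (insertion into the primitive Feynman graph $G\in D$) in the role of the $L_d$'s. Since $B_G$ on $\algfg$ is a priori only defined on the left comodule $I_G$, I would first extend it to a linear endomorphism of the whole of $\algfg$ by declaring $B_G(G')=0$ on every basis graph $G'\notin I_G$.

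The main obstacle is verifying that this zero-extended $B_G$ still satisfies the $1$-cocycle identity $\Delta\circ B_G=B_G\otimes 1+(Id\otimes B_G)\circ \Delta$ on all of $\algfg$. On $I_G$ this is the content of the second point of the preceding proposition; outside $I_G$ the identity must collapse to $0=0$. The combinatorial point to check is that if $G'$ fails one of the two conditions characterizing $I_G$---either $s_t(G')>v_t(G)$ for some vertex type $t$, or $s_t(G')\geq 1$ while $e_t(G')=0$ for some edge type $t$---then every quotient $G'/H$ inherits the same failure, because the numbers $s_t$ of connected components of a given external structure are unchanged by a quotient while the internal edge counts $e_t$ can only decrease. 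Consequently $B_G$ annihilates every right-hand tensor factor appearing in $\Delta(G')$ for $G'\notin I_G$, and the cocycle identity holds trivially there.

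Once the extended $B_G$'s form a $1$-cocycle family, the universal property yields a unique Hopf algebra morphism $\Phi:\h_{CK}^D\longrightarrow \algfg$ with $\Phi\circ B_G=B_G\circ \Phi$ for every $G\in P$. With the decoration $G\in D$ assigned the degree $deg(G)$ inherited from $\algfg$, both avatars of $B_G$ are homogeneous of the same degree, so $\Phi$ is homogeneous of degree $0$ and extends continuously to the completions. I would then form the specialization $\widetilde{X}_i:=X_i\vert_{\alpha_1=\cdots=\alpha_N=1}=\sum_d X_i(d)\in\widehat{\algfg}$. Unwinding $(S'_\calT)$ under $\Phi$ shows that $(\Phi(Y_1),\ldots,\Phi(Y_M))$ satisfies in $\widehat{\algfg}$ exactly the SDSE that $(\widetilde{X}_1,\ldots,\widetilde{X}_M)$ satisfies, namely the version of $(S_\calT)$ with all coupling constants set to $1$. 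By the uniqueness statement for solutions of SDSE proved earlier, $\Phi(Y_i)=\widetilde{X}_i$; comparing homogeneous components of degree $d$ gives $\Phi(Y_i(d))=X_i(d)$ for all $i$ and all $d\in\NN$.

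The conclusion is then immediate. Since $\h_{(S'_\calT)}$ is a Hopf subalgebra of $\h_{CK}^D$ by hypothesis, its image $\Phi(\h_{(S'_\calT)})$ is a Hopf subalgebra of $\algfg$; but this image is precisely the subalgebra generated by the $X_i(d)=\Phi(Y_i(d))$, which is $\h_{(S_\calT)}$. The induced map $Y_i(d)\longmapsto X_i(d)$ is the restriction-corestriction of $\Phi$ and is therefore a surjective Hopf algebra morphism $\h_{(S'_\calT)}\twoheadrightarrow \h_{(S_\calT)}$, as claimed.
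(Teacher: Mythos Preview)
Your approach has a genuine gap in the verification that the zero-extended $B_G$ is a $1$-cocycle on all of $\algfg$. You claim that if $G'\notin I_G$ then every quotient $G'/H$ is again outside $I_G$, arguing that the numbers $s_t$ of connected components of a given external structure are unchanged by a quotient. This is false: if $H$ contains an entire connected component of $G'$, that component disappears in $G'/H$, so $s_t(G'/H)$ can be strictly smaller than $s_t(G')$. Concretely, if $G$ has exactly one vertex of some type $t$ and $G'=G'_1G'_2$ has two connected components of external structure $t$, then $G'\notin I_G$, yet $G'/G'_1=G'_2\in I_G$. In the cocycle identity, the term $G'_1\otimes B_G(G'_2)$ on the right-hand side is then nonzero while the left-hand side $\Delta(B_G(G'))$ vanishes. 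The same phenomenon occurs for the edge-type condition: contracting away all components whose external structure is an edge type $t$ absent from $G$ can move the quotient back into $I_G$. So the extended operator is \emph{not} a $1$-cocycle, and the universal property cannot be invoked on the full $\h_{CK}^D$.

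The paper circumvents this by working not on all of $\h_{CK}^D$ but on the Hopf subalgebra $A'$ generated by \emph{admissible} trees: those for which, at every vertex, the product of the children's decorations lies in $I_G$, where $G$ is the decoration of that vertex. This subalgebra is stable under cuts, hence is Hopf, and it visibly contains all the $Y_i(d)$. On $A'$ one defines $\phi$ inductively by $\phi(B_G^+(T_1\ldots T_k))=B_G(\phi(T_1)\ldots\phi(T_k))$; admissibility guarantees that the argument always lies in $I_G$, so no extension of $B_G$ is needed. The cocycle property of $B_G$ \emph{on} $I_G$ then suffices to make $\phi$ a Hopf morphism. Your argument can be repaired by adopting this restriction to admissible trees rather than attempting a global zero-extension.
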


\begin{proof} Let $T$ be a rooted tree decorated by $D$. We shall say it is admissible if for all vertex $v$ of $T$, denoting by $G$ the decoration of $v$
and by $\bfG_1,\ldots,\bfG_k$ the decorations of the children of $v$, then $\bfG_1\ldots \bfG_k \in I_G$.
We denote by $A'$ the subalgebra generated by all admissible trees. 
If $T$ is admissible, then for all admissible cut $c$ of $T$, $P^c(T)$ and $R^c(T)$ are admissible, so $A'$ is a Hopf subalgebra.
By definition of $(S_\calT)$, $Y_i(d) \in A'$ for all $i \in [M]$, $d\in \NN$.

One can define an algebra morphism $\phi$ from $A'$ to $\algfg$ inductively by:
$$\phi(B^+_G(T_1\ldots T_k))=B_G(\phi(T_1)\ldots \phi(T_k)),$$
for all admissible tree $B^+_G(T_1\ldots T_k)$. It is well-defined: indeed, if $\phi(T_1),\ldots,\phi(T_k)$ are well-defined,
then for all $i$, $\phi(T_i)$ is a linear span of graphs with the external structure given by the decoration of the root of $T_i$.
As $B_G(T_1\ldots T_k)$ is admissible, $\phi(T_1)\ldots \phi(T_k) \in I_G$, so $\phi(B_G^+(T_1\ldots T_k))$ is well-defined.
As $B_G$ and $B_G^+$ are both homogeneous of degree $deg(G)$, an easy induction proves that $\phi$ is homogeneous of degree $0$.
As $\phi \circ B^+_G=B_G\circ \phi$ on $A'$ for all $G'$, $\phi(Y_i(d))=X_i(d)$ for all $i\in [M]$ and all $d\in \NN$. 
By the one-cocycle property of $B^+_G$ and $B_G$ on $I_G$, $(\phi\otimes \phi)\circ \Delta=\Delta\circ \phi$ on $A'$.
Consequently, if $\h_{(S'_\calT)}$ is a Hopf subalgebra of $\h_{CK}^D$, its image $\h_{(S_\calT)}$ is a Hopf subalgebra of $\algfg$. \end{proof}

\begin{theo} \label{theorangC}
If $Rank(C)=|\calV|$, then $\h_{(S'_\calT)}$ is a Hopf subalgebra of $\h_{CK}^D$;
moreover, the SDSE $(S'_\calT)$ is associated to a deg1 pre-Lie algebra.
\end{theo}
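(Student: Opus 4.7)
My plan is to rewrite the system $(S'_\calT)$ in the canonical form produced by Theorem \ref{theo14} (with singleton blocks $I_p=\{p\}$), so that both assertions will follow directly from that theorem. The hypothesis $Rank(C)=|\calV|$ is exactly what makes this rewriting possible.

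First I would use the injectivity of $C$. Since $C$ has rank $|\calV|=k$, it admits a left inverse $\tilde C$. For a primitive graph $G\in P_i$, one-particule irreducibility forces $S_G=\epsilon_i$, so the identity $deg(G)=CV_G-(C\:0)\epsilon_i$ determines $V_G$ uniquely from $\alpha=deg(G)$ and $i$: one finds $V_G=\tilde C\alpha+\eta_i$, where $\eta_i=\epsilon_i$ if $i\leq k$ and $\eta_i=0$ if $i>k$. Plugging this into the formulas $E_G=A_\calT V_G$ and $E'_G=A'_\calT V_G-(A'_\calT\:Id)S_G$ of Proposition \ref{prop37}, and splitting into the cases $i\leq k$ (external structure in $\calV$) and $i>k$ (external structure in $\calE$), a direct computation gives
\begin{align*}
v_j(G)&=A_j\cdot\alpha+\delta_{i,j}&&\text{for }1\leq j\leq k,\\
e_j(G)&=A_j\cdot\alpha-\delta_{i,j}&&\text{for }k+1\leq j\leq k+l,
\end{align*}
where $A_j\in\K^N$ is the $j$-th row of $\tilde C$ for $j\leq k$ and the $(j-k)$-th row of $A'_\calT\tilde C$ for $j>k$; the Kronecker corrections arise, respectively, from the $\eta_i$-shift in $V_G$ and from the $(A'_\calT\:Id)S_G$ contribution.

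Since the exponents now depend only on $(i,\alpha)$, I would then group primitive graphs of the same degree into $B_{(i,\alpha)}:=\sum_{G\in P_i,\,deg(G)=\alpha}B_G$, which rewrites $(S'_\calT)$ as
\begin{align*}
Y_i=\sum_{\alpha\in\NN}B_{(i,\alpha)}\left(\prod_{j=1}^k(1+Y_j)^{A_j\cdot\alpha+\delta_{i,j}}\prod_{j=k+1}^{k+l}(1-Y_j)^{-A_j\cdot\alpha+\delta_{i,j}}\right).
\end{align*}
Using $F_{a,1}(X)=(1+X)^a$, $F_{a,-1}(X)=(1-X)^{-a}$ and the elementary identity $F_{a,b}(X)(1+bX)=F_{a+b,b}(X)$, this is precisely the SDSE of Theorem \ref{theo14} associated to the parameters $I_0=\emptyset$, $I_p=\{p\}$ for $p\in[M]$, $b_p=1$ for $p\leq k$, $b_p=-1$ for $p>k$, and the vectors $A_1,\ldots,A_M$ above. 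Theorem \ref{theo14} then delivers both conclusions simultaneously.

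The main obstacle I expect is the bookkeeping: one must track four combinations (external structure vertex/edge times factor type vertex/edge) in the linearization step, and reconcile the $\pm\delta_{i,j}$ shifts with the extra factor $(1+b_pY_i)$ that Theorem \ref{theo14} appends to the $p$-th factor. The sign choice $b_p=1$ for vertex types and $b_p=-1$ for edge types must match the conventions $(1+Y_j)$ and $(1-Y_j)^{-1}$ used in the SDSE, which is ultimately why the identity $F_{a,b}(X)(1+bX)=F_{a+b,b}(X)$ produces exactly the required shift in both sign conventions.
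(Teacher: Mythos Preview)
Your proposal is correct and follows essentially the same route as the paper's proof: the paper also chooses a left inverse $C'$ of $C$, uses it to write $V_G=C'd+(Id_k\:0)\epsilon_i$ and $E'_G=A'_\calT C'd-(0\:Id_l)\epsilon_i$ for a primitive $G\in P_i$ of degree $d$, groups the operators $B_G$ into $B_{i,d}=\sum_{G\in P_i,\,deg(G)=d}B_G$, and then recognizes the resulting system as an instance of Theorem~\ref{theo14} with singleton blocks $I_p=\{p\}$, $b_p=1$ for $p\le k$, $b_p=-1$ for $p>k$. Your handling of the $\pm\delta_{i,j}$ shifts via $F_{a,b}(X)(1+bX)=F_{a+b,b}(X)$ is exactly the mechanism the paper uses (its extra factor $(1\pm Y_i)$), and your sign bookkeeping is in fact tidier than the paper's displayed formulas.
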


\begin{proof}  First, observe that, as $C$ is a $N\times k$-matrix, $Rank(C)\leq k=|\calV|$.\\

 Let us assume that $Rank(C)=k$. There exists a matrix $C' \in M_{k,N}(\K)$, such that $C'C=Id_k$. 
For any primitive Feynman graph $G$ of external structure $t_i$ and of degree $d$, if $(\epsilon_1,\ldots,\epsilon_M)$ is the canonical basis of $\K^M$,
noting that $d=CV_G-(C\: 0)\epsilon_i$:
\begin{align*}
V_G&=C'd+(Id_k\: 0)\epsilon_i,&E'_G&=A'_\calT C'd-(0\: Id_l)\epsilon_i.
\end{align*}
For all $i\in [M]$, for all $d\in \NN$, we put:
$$B^+_{i,n}=\sum_{G\in P_i, deg(G)=d} B^+_G.$$
The SDSE can be written as:
\begin{align*}
Y_i&=\sum_{d\in \NN} B_{i,d}\left(\prod_{j=1}^k(1+Y_j)^{\sum_p c'_{j,p}d_p} \prod_{j=k+1}^{k+l}(1-Y_j)^{\sum_{p,q}a'_{j,p}c'_{p,q}d_j}(1+Y_i)
\right)\mbox{ if }i\leq k,\\
Y_i&=\sum_{d\in \NN} B_{i,d}\left(\prod_{j=1}^k(1+Y_j)^{\sum_p c'_{j,p}d_p} \prod_{j=k+1}^{k+l}(1-Y_j)^{\sum_{p,q}a'_{j,p}c'_{p,q}d_j}(1-Y_i)
\right)\mbox{ if }i\geq k+1.
\end{align*}
Hence, we recognize the deg1 pre-Lie algebra with $I_p=\{p\}$ for all $1\leq p\leq k+l$, $b$ given by:
$$b^{(i,j)}=\begin{cases}
\delta_{i,j}\mbox{ if }i\leq k,\\
-\delta_{i,j}\mbox{ if }i\geq k+1,
\end{cases}$$
and $A$ given by the matrix $\left(\begin{array}{c} C'\\-A'_\calT C'\end{array}\right)$. \end{proof}\\

As $C'$ has also rank $k$:

\begin{cor}
If $Rank(C)=|\calV|=k$, the graded dual of the Hopf algebra $\h_{(S'_\calT)}$
 is the enveloping algebra of the reduced deg1 pre-Lie algebra with structure coefficients given by:
\begin{align*}
A'^{(i,j)}&: \begin{array}{|c|c|c|}
\hline i\setminus j&1\ldots k&k+1\ldots k+l\\
\hline1\ldots M&A'_j&0\\
\hline \end{array}&A'&=\left(\begin{array}{c}
I_k\\ A''
\end{array}\right)\\
b'^{(i,j)}&:  \begin{array}{|c|c|}
\hline i\setminus j&1\ldots M\\
\hline 1\ldots M&0\\
\hline \end{array} \end{align*}
If $C$ is invertible, then $A''=-A'_\calT$. 
Moreover, $\h_{(S'_\calT)}$ is isomorphic to the coordinate Hopf algebra of the group $V_0^l \rtimes \bfG_{A''}$.
\end{cor}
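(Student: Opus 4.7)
The plan is to combine Theorem~\ref{theorangC} with the classification of reduced deg1 pre-Lie algebras (the preceding proposition), and then identify the resulting reduced algebra with the group-theoretic construction of the proposition about $V_0^r\rtimes\bfG_\calB$ in Section~4.

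First I would extract from Theorem~\ref{theorangC} the explicit fundamental deg1 pre-Lie algebra underlying $\h_{(S'_\calT)}$: all blocks are singletons $I_p=\{p\}$ for $p\in[k+l]$, the diagonal $b$-coefficients are $\pm 1$, and the matrix $A$ equals $\left(\begin{smallmatrix} C' \\ -A'_\calT C' \end{smallmatrix}\right)$. Since every block is a singleton, the first step of the reduction procedure (merging elements inside a block) is vacuous. Moreover, the diagonal $b$-terms cancel in the bracket $[f_i(\beta),f_i(\alpha)]$, and the off-diagonal bracket reduces to $[f_j(\beta),f_i(\alpha)]=A_j\cdot\alpha\, f_i(\alpha+\beta)-A_i\cdot\beta\, f_j(\alpha+\beta)$ with no $b$-terms. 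Hence the Lie bracket is independent of the $b$'s, so the graded dual of $\h_{(S'_\calT)}$ is the enveloping algebra of the same Lie algebra one would obtain by setting $b=0$.

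Next I would perform the matrix reduction of $A$. The top $k$ rows of $A$ are $C'$, of rank $k$, and the bottom $l$ rows are $-A'_\calT$ times the top; hence $\mathrm{Rank}(A)=k$. Following the second step of the reduction proposition, permute the columns of $\K^N$ so that the top-left $k\times k$ block $A_1\subset C'$ is invertible and write
$$
A=\begin{pmatrix} A_1 & A_2 \\ -A'_\calT A_1 & -A'_\calT A_2 \end{pmatrix},
$$
then right-multiply by $P=\left(\begin{smallmatrix} A_1^{-1} & -A_1^{-1}A_2 \\ 0 & I_{N-k} \end{smallmatrix}\right)$. Because the bottom block factorises through the top, a direct block computation gives
$$
A\cdot P=\begin{pmatrix} I_k & 0 \\ -A'_\calT & 0 \end{pmatrix},
$$
so the reduced form of $A$ is $\left(\begin{smallmatrix} I_k \\ A'' \end{smallmatrix}\right)$ with $A''=-A'_\calT$. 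When $C$ is invertible ($N=k$) there is nothing to permute, and simply right-multiplying the original $A$ by $C$ yields the same conclusion directly.

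Finally I would match the reduced deg1 pre-Lie algebra obtained above with the one attached by the proposition about $V_0^r\rtimes\bfG_\calB$ to the group $V_0^l\rtimes\bfG_{A''}$: set $p=k$, $q=N-k$, $r=l$, $\calB=A''$; the two tables of structure coefficients then coincide entry by entry. That proposition shows $V_0^l\rtimes\bfG_{A''}$ is the character group of a graded connected Hopf algebra whose graded dual is precisely this enveloping algebra; the Cartier--Milnor--Moore reconstruction of a connected graded Hopf algebra from its graded dual therefore identifies $\h_{(S'_\calT)}$ with the coordinate Hopf algebra of $V_0^l\rtimes\bfG_{A''}$. The main technical obstacle will be bookkeeping: tracking rows-versus-columns conventions for the $A_j$'s, reconciling the $\N^N$-grading of $\h_{(S'_\calT)}$ with the $\N^{p+q}$-grading produced on the group side, and making sure the two basis changes (setting $b$ to zero and bringing $A$ to reduced form) assemble into a bona fide Hopf-algebra isomorphism rather than merely an isomorphism of their graded duals.
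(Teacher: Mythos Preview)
Your plan matches the paper's: Theorem~\ref{theorangC} supplies the fundamental deg1 pre-Lie algebra, the reduction proposition puts it in reduced form, and the proposition on $V_0^r\rtimes\bfG_\calB$ identifies the associated group. The paper's own proof is essentially one line --- it leaves the general reduced-form claim implicit (``It remains to consider the case where $C$ is invertible'') and, for invertible $C$, simply writes $C'=C^{-1}$ and observes that multiplying $A$ by $C$ yields $\left(\begin{smallmatrix} I_k\\-A'_\calT\end{smallmatrix}\right)$.

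One caveat on your general-case computation: you right-multiply the $(k+l)\times N$ array $A$ (rows are the $A_j$) by a matrix $P\in GL_N(\K)$. That is a change of the $\K^N$-coordinates of $\alpha$, not the basis change $g_i(\alpha)=\sum_j p_{j,i}f_j(\alpha)$ with $P\in GL_{k+l}(\K)$ that the reduction proposition actually uses; the two operations act on \emph{opposite} indices of $A$, and only the latter is automatically a Lie-algebra isomorphism. Your block calculation is arithmetically correct, but it does not implement the reduction procedure, and in particular it does not justify the claim $A''=-A'_\calT$ for non-invertible $C$ (note the corollary itself only asserts this when $C$ is invertible). The paper sidesteps the issue by treating only the invertible case explicitly and leaving $A''$ unspecified otherwise. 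Your own flagged ``bookkeeping'' worry about row/column conventions is exactly where this lives; for the general statement it is cleaner to cite the reduction proposition together with the observation that $\mathrm{rank}\,A=\mathrm{rank}\,C'=k$, and reserve the explicit computation for the invertible case, as the paper does.
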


\begin{proof} It remains to consider the case where $C$ is invertible. In this case, $C'=C^{-1}$ and
$A=\left(\begin{array}{c} C'\\-A'_\calT C'\end{array}\right)$. We then take $A=A'C=\left(\begin{array}{c}Id_k\\-A_\calT'\end{array}\right)$. \end{proof}\\

{\bf Examples.}
\begin{enumerate}
\item If there is only one vertex type, we can choose the graduation by the loop number.
\begin{enumerate}
\item For QED, $C=\left(\frac{1}{2}\right)$, so $C'=(2)$; hence, $A=\left(\begin{array}{c}\frac{1}{2}\\-\frac{1}{2}\\ -\frac{-1}{4}\end{array}\right)$
and $A'=\left(\begin{array}{c}1\\-1\\-\frac{1}{2}\end{array}\right)$. The SDSE is:
\begin{align*}
X_1&=\sum_{k\geq 1}\alpha^k \sum_{G\in D_1(k)} B_G\left(\frac{(1+X_1)^{2k+1}}{(1-X_2)^k(1-X_3)^{2k}}\right),\\
X_2&=\sum_{k\geq 1}\alpha^k \sum_{G\in D_2(k)} B_G\left(\frac{(1+X_1)^{2k}}{(1-X_2)^{k-1}(1-X_3)^{2k}}\right),\\
X_3&=\sum_{k\geq 1}\alpha^k \sum_{G\in D_3(k)} B_G\left(\frac{(1+X_1)^{2k}}{(1-X_2)^k(1-X_3)^{2k-1}}\right),
\end{align*}
where $D_1(k)$, $D_2(k)$ and $D_3(k)$ are sets of primitive Feynman graphs with $k$ loops and respective external structures $\vertexQED$, 
$\edgephoton$ and $\edgeelectronun$. In particular:
\begin{align*}
D_2(k)&=\begin{cases}
\left\{\QEDdeux\right\}\mbox{ if }k=1,\\
\emptyset \mbox{ otherwise};
\end{cases}&
D_3(k)&=\begin{cases}
\left\{\QEDtrois\right\}\mbox{ if }k=1,\\
\emptyset \mbox{ otherwise}.
\end{cases}
\end{align*}
\item In $\varphi^n$, $C=\left(\frac{n-2}{2}\right)$, so $C'=\left(\frac{2}{n-2}\right)$; 
hence, $A=\left(\begin{array}{c}\frac{2}{n-2}\\-\frac{n}{n-2}\end{array}\right)$ and $A'=\left(\begin{array}{c}1\\-\frac{n}{2}\end{array}\right)$.
The SDSE is:
\begin{align*}
X_1&=\sum_{k\geq 1}\alpha^k \sum_{G\in D_1(k)} B_G\left(\frac{(1+X_1)^{\frac{2k}{n-2}+1}}{(1-X_2)^{\frac{nk}{n-2}}}\right),\\
X_2&=\sum_{k\geq 1}\alpha^k \sum_{G\in D_2(k)} B_G\left(\frac{(1+X_1)^{\frac{2k}{n-2}}}{(1-X_2)^{\frac{nk}{n-2}-1}}\right),
\end{align*}
where $D_1(k)$ and $D_2(k)$ are sets of primitive Feynman graphs with $k$ loops and respective external structures the vertex and the edge.
\end{enumerate}
\item In QCD, we take:
$$C=\left(\begin{array}{cccc}
1&0&0&0\\
0&1&0&0\\
\frac{1}{2}&\frac{1}{2}&\frac{3}{2}&2\\
\frac{1}{2}&\frac{1}{2}&\frac{1}{2}&1
\end{array}\right).$$
If $G$ is a QCD Feynman graph, then:
\begin{align*}
deg(G)&=\left(deg_{\edgeelectronun}(G),\:deg_{\edgeghostun}(G),\:deg_{\edgegluon}(G),\:\ell(G)\right).
\end{align*}
It is a connected $\mathbb{N}^4$-graduation. Moreover, $C'=C^{-1}$, and:
\begin{align*}
A&=\left(\begin{array}{cccc}
1&0&0&0\\0&1&0&0\\1&1&2&-4\\-1&-1&-1&3\\
-1&0&0&0\\0&-1&0&0\\0&0&-1&0
\end{array}\right),&
A'&=\left(\begin{array}{cccc}
1&0&0&0\\0&1&0&0\\0&0&1&0\\0&0&0&1\\
-1&0&0&0\\0&-1&0&0\\-\frac{1}{2}&-\frac{1}{2}&-\frac{3}{2}&-2\\
\end{array}\right). \end{align*}
The SDSE is:
\begin{align*}
X_1&=\sum_{k\in \mathbb{N}^4_*}
\alpha^k \sum_{G\in D_1(k)} B_G\left(\frac{(1+X_1)^{k_1+1}(1+X_2)^{k_2}(1+X_3)^{\alpha(k)}(1+X_4)^{\beta(k)}}
{(1-X_5)^{k_1}(1-X_6)^{k_2}(1-X_7)^{k_3}}\right),\\
X_2&=\sum_{k\in \mathbb{N}^4_*}
\alpha^k \sum_{G\in D_2(k)} B_G\left(\frac{(1+X_1)^{k_1}(1+X_2)^{k_2+1}(1+X_3)^{\alpha(k)}(1+X_4)^{\beta(k)}}
{(1-X_5)^{k_1}(1-X_6)^{k_2}(1-X_7)^{k_3}}\right),\\
X_3&=\sum_{k\in \mathbb{N}^4_*}
\alpha^k \sum_{G\in D_3(k)} B_G\left(\frac{(1+X_1)^{k_1}(1+X_2)^{k_2}(1+X_3)^{\alpha(k)+1}(1+X_4)^{\beta(k)}}
{(1-X_5)^{k_1}(1-X_6)^{k_2}(1-X_7)^{k_3}}\right),\\
X_4&=\sum_{k\in \mathbb{N}^4_*}
\alpha^k \sum_{G\in D_4(k)} B_G\left(\frac{(1+X_1)^{k_1}(1+X_2)^{k_2}(1+X_3)^{\alpha(k)}(1+X_4)^{\beta(k)+1}}
{(1-X_5)^{k_1}(1-X_6)^{k_2}(1-X_7)^{k_3}}\right),\\
X_5&=\sum_{k\in \mathbb{N}^4_*}
\alpha^k \sum_{G\in D_5(k)} B_G\left(\frac{(1+X_1)^{k_1}(1+X_2)^{k_2}(1+X_3)^{\alpha(k)}(1+X_4)^{\beta(k)}}
{(1-X_5)^{k_1-1}(1-X_6)^{k_2}(1-X_7)^{k_3}}\right),\\
X_6&=\sum_{k\in \mathbb{N}^4_*}
\alpha^k \sum_{G\in D_6(k)} B_G\left(\frac{(1+X_1)^{k_1}(1+X_2)^{k_2}(1+X_3)^{\alpha(k)}(1+X_4)^{\beta(k)}}
{(1-X_5)^{k_1}(1-X_6)^{k_2-1}(1-X_7)^{k_3}}\right),\\
X_7&=\sum_{k\in \mathbb{N}^4_*}
\alpha^k \sum_{G\in D_7(k)} B_G\left(\frac{(1+X_1)^{k_1}(1+X_2)^{k_2}(1+X_3)^{\alpha(k)}(1+X_4)^{\beta(k)}}
{(1-X_5)^{k_1}(1-X_6)^{k_2}(1-X_7)^{k_3-1}}\right),
\end{align*}
with $\alpha(k)=k_1+k_2+2k_3-4k_4$ and $\beta(k)=-k_1-k_2-k_3+3k_4$, and
where $D_1(k)$, $D_2(k)$, $D_3(k)$, $D_4(k)$, $D_5(k)$, $D_6(k)$ and $D_7(k)$ are sets of primitive Feynman graphs of degree $k$
and respective external structures:
$$\vertexQCDun,\:\vertexQCDdeux,\:\vertexQCDtrois,\:\vertexQCDquatre,\:\edgeelectronun,\:\edgeghostun,\:\edgegluon.$$
 \end{enumerate}

{\bf Remark}. We can extend the set of considered Feynman graphs by admiting other external structures, indexed by $k+l+1,\ldots,k+l+m$.
for $k+1\leq j \leq k+l$ and  $k+l+1\leq i\leq k+l+m$, let $\lambda^{(i)}_j$ be the number of copies of half-edges of the $j$-th type of edge $t_j$
in the $i$-th external structure, divided by $2$. We obtain a SDSE given by:
\begin{align*}
X_i&=\sum_{d\in \NN} B_{i,d}\left(\prod_{j=1}^k(1+X_j)^{\sum_p c'_{j,p}d_p} \prod_{j=k+1}^{k+l}(1-X_j)^{\sum_{p,q}a'_{j,p}c'_{p,q}d_j}(1+X_i)
\right)\mbox{ if }i\leq k,\\
X_i&=\sum_{d\in \NN} B_{i,d}\left(\prod_{j=1}^k(1+X_j)^{\sum_p c'_{j,p}d_p} \prod_{j=k+1}^{k+l}(1-X_j)^{\sum_{p,q}a'_{j,p}c'_{p,q}d_j}(1-X_i)
\right)\mbox{ if }k+1\leq i\leq k+l,\\
X_i&=\sum_{d\in \NN} B_{i,d}\left(\prod_{j=1}^k(1+X_j)^{\sum_p c'_{j,p}d_p} \prod_{j=k+1}^{k+l}(1-X_j)^{\sum_{p,q}a'_{j,p}c'_{p,q}d_j}
\prod_{j=k+1}^{k+l} (1-X_j)^{\lambda^{(i)}_j}
\right)\mbox{ otherwise}.
\end{align*}
We recognize the deg1 pre-Lie algebra with $I_p=\{p\}$ for all $1\leq p\leq k+l$, $I_0=\{k+l+1,\ldots,k+l+n\}$, $b$ given by:
$$b^{(i,j)}=\begin{cases}
\delta_{i,j}\mbox{ if }i\leq k,\\
-\delta_{i,j}\mbox{ if }k+1\leq i\leq k+l,\\
-\lambda^{(i)}_j \mbox{ if }i\geq k+l+1,
\end{cases}$$
and $A$ given by the matrix $\left(\begin{array}{c} C'\\-A'_\calT C'\end{array}\right)$.
If $C$ is invertible, $\h_{(S'_\calT)}$ is isomorphic to the Hopf algebra of coordinates of the group:
$$(V_{\lambda^{(k+l+1)},0}\oplus \ldots \oplus V_{\lambda^{(k+l+m)},0})\rtimes(V_0^l \rtimes \bfG_{A''}).$$

\subsection{Minimal rank for QCD}

Let us consider a QFT, the SDSE $(S'_\calT)$ associated to it, and a matrix $C$ giving a connected $\N^N$-graduation.
We proved that if $Rank(C)=|\calV|$, then $\h_{(S_\calT)}$ is Hopf; we would like to know what the minimal rank of $C$ required to make $\h_{(S'_\calT)}$
a Hopf subalgebra is. For QED or $\varphi^n$, as $|\calV|=1$, this is obviously $1$. 
If the theory has \emph{enough} primitive Feynman graphs, this minimal rank is $|\calV|$: we now prove this result for QCD.

\begin{prop}\label{propQCD}
In the QCD case, the graduation induced by $C$ gives a Hopf SDSE if, and only if $Rank(C)=4$.
\end{prop}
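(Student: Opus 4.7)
The $(\Leftarrow)$ direction is immediate: if $\mathrm{Rank}(C)=|\calV|=4$, Theorem \ref{theorangC} gives that $\h_{(S'_\calT)}$ is a Hopf subalgebra of $\h_{CK}^D$, and the surjection of the preceding proposition transfers this to $\h_{(S_\calT)}\subseteq \algfg$.

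For the $(\Rightarrow)$ direction, the strategy is to argue by contradiction and exploit Lemma \ref{lemmaseriesidentiques}. Assume $\mathrm{Rank}(C)\leq 3$. Since $C\in M_{N,4}(\mathbb{Q})$, the kernel $\mathrm{Ker}(C)\cap \mathbb{Z}^4$ then contains a nonzero integer vector $\lambda=(\lambda_1,\lambda_2,\lambda_3,\lambda_4)$. I will produce, for some external structure $t \in \calV \sqcup \calE$, two distinct primitive 1-PI QCD graphs $G_1,G_2 \in P_t$ satisfying $V_{G_1}-V_{G_2} \in \mathbb{Z}\lambda\setminus\{0\}$. Granting this, since $S_{G_1}=S_{G_2}$ (both are connected with external structure $t$) and $C(V_{G_1}-V_{G_2})=0$, the two decorations $G_1,G_2 \in D_t$ have the same $\N^N$-degree $\mathrm{deg}(G_1)=\mathrm{deg}(G_2)$. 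On the other hand, the formal series attached to $B_{G_k}$ in $(S_\calT)$ is
$$f_{G_k}(X)=\prod_{j=1}^{4}(1+X_j)^{v_j(G_k)+\delta_{j,t}}\prod_{j=5}^{7}(1-X_j)^{-e_j(G_k)-\delta_{j,t}},$$
and since $V_{G_1}\neq V_{G_2}$ there is at least one $j \in \{1,2,3,4\}$ on which the exponents disagree, hence $f_{G_1}\neq f_{G_2}$. By Lemma \ref{lemmaseriesidentiques} this forbids $\h_{(S_\calT)}$ from being a Hopf subalgebra.

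The main obstacle is the geometric step: producing the two primitive graphs with the prescribed vertex-count difference. The plan is to show that for each external structure $t$, the set $\{V_G : G\in P_t\}\subseteq \N^4$ has differences spanning $\mathbb{Z}^4$, so in particular it hits every nonzero direction of $\mathrm{Ker}(C)$. Concretely, I would exhibit families of primitive QCD building blocks that allow one to raise any single coordinate of $V_G$ independently: long chains of $3$-gluon vertices braided together (controls $v_3$), planar $4$-gluon cores (controls $v_4$), closed quark loops spanned by a single gluon bridge (controls $v_1$), and the analogous ghost loops (controls $v_2$). The delicate point is primitivity: any added subgraph must not itself be 1-PI with a recognized external structure, else it would give a non-trivial subdiagram. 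This is ensured by attaching each inserted block through at least two edges to the surrounding graph, so that cutting any single edge still leaves the block connected to the rest.

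Finally, having produced, for each coordinate, a "vertex-raising" primitive modification that alters only that coordinate of $V_G$ while preserving the external structure $t$, one combines them by taking suitable large integer combinations. Given any nonzero $\lambda \in \mathrm{Ker}(C)\cap \mathbb{Z}^4$, one builds $G_1$ by applying $\max(\lambda_i,0)$ of each type-$i$ modification to a fixed primitive basepoint, and $G_2$ by applying $\max(-\lambda_i,0)$ modifications, so that $V_{G_1}-V_{G_2}=\lambda$. This completes the construction, yields the contradiction, and shows $\mathrm{Rank}(C)=4$.
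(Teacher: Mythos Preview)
Your overall strategy coincides with the paper's: assume $\mathrm{Rank}(C)\leq 3$, pick a nonzero $\lambda\in\mathrm{Ker}(C)\cap\mathbb{Z}^4$, build two primitive graphs of the same external structure whose vertex vectors differ by a multiple of $\lambda$, then invoke Lemma~\ref{lemmaseriesidentiques}. The $(\Leftarrow)$ direction and the use of the lemma are fine. The difficulty is entirely in the construction, and here your sketch contains two genuine gaps.

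First, the assertion that one can produce, for each $i\in\{1,2,3,4\}$, a primitive modification that alters \emph{only} the coordinate $v_i$ is not correct. Every ghost--gluon vertex carries a gluon half-edge, every $3$- or $4$-gluon vertex carries several; these gluons must terminate somewhere, and since vertices have prescribed valence they terminate at \emph{new} vertices, which contribute to some other $v_j$. One cannot move $v_2$, $v_3$ or $v_4$ without moving at least one other coordinate. The paper sidesteps this by working not with the standard basis but with the achievable vectors $(2,0,0,0)$, $(2,2,0,0)$, $(3,0,1,0)$, $(4,0,0,1)$, obtained by gluing respectively a gluon rung, a ghost loop, a $3$-gluon vertex, a $4$-gluon vertex onto the fermion legs of $\vertexQCDun$ (each attachment creating new $\vertexQCDun$ vertices). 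These four vectors are linearly independent over $\mathbb{Q}$, which is all that is needed to hit a nonzero multiple of any $\lambda\in\mathrm{Ker}(C)$.

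Second, your primitivity argument is about the wrong property. Saying that a block is attached through at least two edges guarantees that the \emph{ambient} graph is $1$-PI, but primitivity requires that there be no proper $1$-PI subgraph with admissible external structure. A ghost loop attached by two gluons is precisely the primitive graph $\QCDdeuxbis$ with external structure $\edgegluon$; inserting it na\"ively leaves it as a contractible subgraph, so the result is not primitive. What is needed (and what the paper's word ``judiciously'' encodes) is to interleave the attachment points of the various blocks along the fermion line so that no single block can be contracted to a valid Feynman graph; this forces one to take \emph{all} of $a,b,c,d\geq 1$ simultaneously, and is why the paper works in $\N_*^4$ rather than $\N^4$.
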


\begin{proof} We already proved the implication $\Longleftarrow$. 
We first construct enough primitive Feynman graphs of external structure $\vertexQCDun$.
Let $(a,b,c,d) \in \N_*^4$. We start with $G=\vertexQCDun$.
Judiciously gluing the external edges of  $a$ copies of $\edgegluon$, $b$ copies of $\QCDdeuxbis$, 
$c$ copies of $\vertexQCDtrois$ and $d$ copies of$\vertexQCDquatre$  on the edges
$\edgeelectronun$ and $\edgeelectrondeux$, creating in this way new $2a+2b+3c+4d$ new vertices of type $\vertexQCDun$, 
we obtain a primitive Feynman graph $G'$ with:
$$V_{G'}=\left(\begin{array}{c} 1\\0\\0\\0\end{array}\right)+a\left(\begin{array}{c}2\\0\\0\\0\end{array}\right)
+b\left(\begin{array}{c}2\\2\\0\\0\end{array}\right)+c\left(\begin{array}{c}3\\0\\1\\0\end{array}\right)+d\left(\begin{array}{c}4\\0\\0\\1\end{array}\right).$$

Let us assume that $\h_{(S'_\calT)}$ is Hopf and that $Rank(C)\leq 3$. There exists a nonzero vector $v \in \mathbb{Q}^4$, such that 
$Cv=(0)$. We decompose this vector $v$ in the basis:
$$\left(\left(\begin{array}{c}2\\0\\0\\0\end{array}\right), \left(\begin{array}{c}2\\2\\0\\0\end{array}\right),
\left(\begin{array}{c}3\\0\\1\\0\end{array}\right), \left(\begin{array}{c}4\\0\\0\\1\end{array}\right)\right).$$
After a multiplication by a nonzero integer and separation of the terms according to their signs, 
we obtain that there exists two different vectors $w$ and $w'$, such that:
\begin{align*}
w&=a\left(\begin{array}{c}2\\0\\0\\0\end{array}\right)+b\left(\begin{array}{c}2\\2\\0\\0\end{array}\right)
+c\left(\begin{array}{c}3\\0\\1\\0\end{array}\right)+d\left(\begin{array}{c}4\\0\\0\\1\end{array}\right),\: a,b,c,d\in \N,\\
w'&=a'\left(\begin{array}{c}2\\0\\0\\0\end{array}\right)+b'\left(\begin{array}{c}2\\2\\0\\0\end{array}\right)
+c'\left(\begin{array}{c}3\\0\\1\\0\end{array}\right)+d'\left(\begin{array}{c}4\\0\\0\\1\end{array}\right),\: a',b',c',d'\in \N,\\
Cw&=Cw'.
\end{align*}
Let $G$ and $G'$ be primitive Feynman graphs of external structure $\vertexQCDun$ such that:
\begin{align*}
V_G&=\left(\begin{array}{c} 1\\0\\0\\0\end{array}\right)+w,&V_{G'}&=\left(\begin{array}{c} 1\\0\\0\\0\end{array}\right)+w'.
\end{align*}
Their degree are:
\begin{align*}
deg(G)&=CV_G-(C\: 0)\left(\begin{array}{c}1\\0\\\vdots\\0\end{array}\right)=Cw,&
deg(G')&=CV_{G'}-(C\: 0)\left(\begin{array}{c}1\\0\\\vdots\\0\end{array}\right)=Cw'.
\end{align*}
So $deg(G)=deg(G')$. According to lemma \ref{lemmaseriesidentiques}, $f_G=f_{G'}$, so in particular, for all $i\in [4]$, $v_i(G)=v_i(G')$,
which implies that $V_G=V_{G'}$ and finally $w=w'$, which is a contradiction. We conclude that $Rank(C)=4$. \end{proof}

\section{SDSE associated to coloured graphs}

We now generalize multicylic SDSE of \cite{Foissy3,Foissy6}. We are interested here in SDSE of the form:
$$(S): \forall i\in [M], \: X_i=\sum_{\alpha \in \N^N_*} B_{i,\alpha}\left(1+\sum_{j\in I_{i,\alpha}} X_j\right),$$
where the $I_{i,\alpha}$ are nonempty sets.

\begin{defi}
\begin{enumerate}
\item A $N$-coloured oriented graph is an oriented graph $G$, with a map from the set $E(G)$ of edges of $G$ into $[N]$.
We denote by $V(G)$ the set of vertices of $G$. 
For all $i,j \in V(G)$, for all $\alpha=(\alpha_1,\ldots,\alpha_N)\in \N^N$, we shall write $i\fleche{\alpha} j$ if there exists an oriented path from $i$ to $j$
in $G$, with $\alpha_i$ edges coloured by $i$ for all $i\in [N]$.
\item Let $G$ be a $N$-coloured oriented graph. The SDSE associated to $G$ is associated to the $\N^N$-graded partitioned
set $D=V(G)\times \NN$:
$$(S_G):\:\forall i\in V(G), \: X_i=\sum_{\alpha \in \NN} B_{i,\alpha}\left(1+\sum_{i\fleche{\alpha} j} X_j\right).$$ 
\item Let $G$ be a $N$-coloured oriented graph. We shall say that $G$ is Hopf if, for all $i,j,k \in V(G)$, for all $\alpha,\beta \in \NN$,
$$(i\fleche{\alpha} j \mbox{ and }j\fleche{\beta}k)\Longleftrightarrow (i\fleche{\alpha} j  \mbox{ and }i\fleche{\alpha+\beta}k).$$
(Note that $\Longrightarrow$ is always satisfied). 
\end{enumerate}\end{defi}

\begin{prop}
We consider a SDSE of the form:
$$(S): \forall i\in [M], \: X_i=\sum_{\alpha \in \N^N_*} B_{i,\alpha}\left(1+\sum_{j\in I_{i,\alpha}} X_j\right),$$
It is Hopf, if, and only if, there exists a $N$-coloured Hopf graph on $[M]$ such that $(S)$ is equal to $(S_G)$. 
If this holds, the dual pre-Lie algebra of $\h_{(S_G)}$ is associative; 
it has a basis $(f_i(\alpha))_{i\in V(G),\alpha\in \NN}$ and the product is given by:
$$f_j(\beta)*f_i(\alpha)=\begin{cases}
f_i(\alpha+\beta)\mbox{ if }i\fleche{\alpha} j,\\
0\mbox{ otherwise}.
\end{cases}$$
\end{prop}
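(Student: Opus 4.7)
Given $(S)$, I first attach a candidate $N$-coloured oriented graph $G_S$ on $[M]$ by placing an arrow $i \to j$ of colour $c$ whenever $j \in I_{i,\epsilon_c}$. Unfolding the definition of the path relation in $G_S$ shows that $(S) = (S_{G_S})$ is equivalent to the identities $I_{i,\alpha} = \{j \mid i \fleche{\alpha} j\}$ for all $(i,\alpha) \in [M] \times \NN$. Thus the proposition reduces to: $(S)$ is Hopf if and only if these path identities hold and $G_S$ satisfies the Hopf condition, together with the structural description of the pre-Lie dual.

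\textbf{Ladder structure.} A uniform preliminary I would establish for both directions is that every tree appearing in $X_i$ is a \emph{ladder} (a tree with a unique leaf), with coefficient exactly $1$. Since $f_{i,\alpha}(x) = 1 + \sum_{j \in I_{i,\alpha}} x_j$ is affine with $0/1$ coefficients, an induction on the number of vertices shows that the trees occurring in $X_i$ are exactly those of the form $B_{i,\alpha_0}(B_{i_1,\alpha_1}(\cdots B_{i_m,\alpha_m}(1)))$ with $i_0 = i$ and $i_{k+1} \in I_{i_k,\alpha_k}$ for $0 \leq k < m$, each contributing with coefficient $1$.

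\textbf{Direction $(\Longleftarrow)$.} Assume $G$ is a Hopf coloured graph. A direct computation shows the bilinear product $*$ defined by the formula of the proposition is in fact associative: both $(f_k(\gamma) * f_j(\beta)) * f_i(\alpha)$ and $f_k(\gamma) * (f_j(\beta) * f_i(\alpha))$ equal $f_i(\alpha + \beta + \gamma)$ respectively under the two sides of the Hopf condition on $G$, which the hypothesis renders equivalent; in particular $*$ is pre-Lie. To promote this to the Hopf-subalgebra statement, I would apply the admissible-cut formula to each ladder in $X_i(\delta)$: cutting above the top vertex of a lower ladder $L_{low}$ of degree $\eta$ allows any upper ladder rooted at $j$ in the set $I_{top,\alpha_{top}}$, and the Hopf condition on $G$ makes this set coincide with $\{j \mid i \fleche{\eta} j\}$ independently of the specific top vertex. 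Summing over lower ladders then collapses to $X_i(\eta)$ and yields
\[
\Delta(X_i(\delta)) = X_i(\delta) \otimes 1 + 1 \otimes X_i(\delta) + \sum_{\substack{\eta + \delta' = \delta \\ \eta,\delta' \neq 0}} \Bigl(\sum_{j \,:\, i \fleche{\eta} j} X_j(\delta')\Bigr) \otimes X_i(\eta),
\]
which lies in $\h_{(S_G)} \otimes \h_{(S_G)}$ and, by dualisation, identifies the pre-Lie product as stated.

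\textbf{Direction $(\Longrightarrow)$ and main obstacle.} Conversely, assume $(S)$ is Hopf. Running the same admissible-cut expansion without any graph hypothesis, the left factor attached to $L_{low}$ is $\sum_{k \in I_{top(L_{low}),\alpha_{top}(L_{low})}} X_k(\delta - \deg L_{low})$, which a priori depends on the entire ladder, not just on its root $i$ and its total degree. Since the ladders $L_{low}$ are linearly independent as trees in $\h^D$ and the $X_j(\delta')$ are algebraically free generators of $\h_{(S)}$ within each graded component, the requirement $\Delta(X_i(\delta)) \in \h_{(S)} \otimes \h_{(S)}$ forces this left factor to depend only on $(i, \deg L_{low})$. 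This in turn yields the composition rule $I_{i,\alpha+\beta} = I_{j,\beta}$ whenever $j \in I_{i,\alpha}$; by induction on $|\alpha|$ this simultaneously identifies $I_{i,\alpha}$ with the path set $\{j \mid i \fleche{\alpha} j\}$ of $G_S$ and establishes the Hopf condition for $G_S$. The principal obstacle is this decoupling argument: carefully extracting, from the tree-level coproduct of $X_i(\delta)$, the constancy of $I_{top,\alpha_{top}}$ over all ladders of given degree rooted at $i$, and then bootstrapping it by induction to obtain both the path identities and the full Hopf condition simultaneously.
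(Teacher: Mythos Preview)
Your proof is correct and follows the same overall strategy as the paper: attach the coloured graph $G$ via the sets $I_{i,\epsilon_c}$, derive the composition rule $I_{i,\alpha+\beta}=I_{j,\beta}$ whenever $j\in I_{i,\alpha}$, and then induct on $|\alpha|$ to identify $I_{i,\alpha}$ with the path set and read off the Hopf condition; for the converse, verify associativity of $*$ and deduce that $\h_{(S_G)}$ is Hopf.

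The differences are purely local. For $\Longrightarrow$, the paper extracts the composition rule by isolating one explicit coefficient: it writes down the terms $\tdun{$i,\alpha+\beta$}$, $\tddeux{$i,\alpha$}{$j,\beta$}$, $\tdtroisdeux{$i,\alpha$}{$j,\beta$}{$k,\gamma$}$ in $X_i$, looks at the piece of $\Delta(X_i(\alpha+\beta+\gamma))$ with left tensorand $\tdun{$k,\gamma$}$, and compares with $X_i(\alpha+\beta)$ to get $ab=ac$. You instead argue globally that the left factor attached to each lower ladder must be constant across all ladders of a given degree, and then compare the one-vertex and two-vertex ladders. Both routes arrive at the same identity; yours is slightly more structural, the paper's slightly more hands-on. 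For $\Longleftarrow$, the paper stops after associativity of $*$ and invokes the general mechanism (the graded dual of $\mathcal U(\g)$ embeds in $\h^D$ as the subalgebra generated by the solution), whereas you compute the coproduct formula for $X_i(\delta)$ directly via admissible cuts. Your explicit formula is a nice bonus and makes the pre-Lie identification transparent without appealing to the embedding.
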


\begin{proof} $\Longrightarrow$. {\it First step.} Let us assume that $(S)$ is Hopf. We fix $i,j,k \in [M]$ and $\alpha,\beta,\gamma \in \N^N_*$.
We put:
\begin{align*}
a&=\begin{cases}
1&\mbox{ if }j\in I_{i,\alpha},\\
0&\mbox{ otherwise;}
\end{cases}&
b&=\begin{cases}
1&\mbox{ if }k\in I_{i,\alpha+\beta},\\
0&\mbox{ otherwise;}
\end{cases}&
c&=\begin{cases}
1&\mbox{ if }k\in I_{j,\beta},\\
0&\mbox{ otherwise.}
\end{cases}&
\end{align*}
We obtain:
\begin{align*}
X_i(\alpha+\beta)&=\tdun{$i,\alpha+\beta$}\hspace{.7cm}+a\tddeux{$i,\alpha$}{$j,\beta$}\hspace{.25cm}+\ldots,\\
X_i(\alpha+\beta+\gamma)&=\tdun{$i,\alpha+\beta+\gamma$}\hspace{10.5mm}+b\tddeux{$i,\alpha+\beta$}{$k,\gamma$}\hspace{.7cm}
+ac\tdtroisdeux{$i,\alpha$}{$j,\beta$}{$k,\gamma$}\hspace{.25cm}+\ldots
\end{align*}
Hence:
$$\Delta(X_i(\alpha+\beta+\gamma))=\tdun{$k,\gamma$}\hspace{.25cm}\otimes
\underbrace{\left(b\tdun{$i,\alpha+\beta$}\hspace{.7cm}+ac\tddeux{$i,\alpha$}{$j,\beta$}\hspace{.25cm}+\ldots\right)}_{=X}\ldots$$
As $\h_{(S)}$ is Hopf, $X$ is a multiple of $X_i(\alpha+\beta)$, so $ab=ac$: $a=0$ or $b=c$. In particular, if $a\neq 0$, $b=c$. Hence, 
for all $i,j,k \in [M]$, for all $\alpha,\beta \in \N^N_*$, 
$j\in I_{i,\alpha}$ and $k\in I_{j,\beta}$ if, and only if, $i\in I_{i,\alpha}$ and $k\in I_{i,\alpha+\gamma}$.\\

{\it Second step.} We define a coloured graph structure $G$ on $[M]$ in the following way: for all $i,j \in [M]$, for all $p\in [N]$, 
there exists an edge from $i$ to $j$ decorated by $p$ if, and only if, $j\in I_{i,\epsilon_p}$. Let us prove that for all $i,k \in [M]$,
for all $\alpha \in \N^N_*$, $k\in I_{i,\alpha}$ if, and only if, $i\fleche{\alpha}k$ in $G$. We proceed by induction on $|\alpha|=\alpha_1+\ldots+\alpha_N$.
This is obvious if $|\alpha|=1$. Let us assume the result at rank $|\alpha|-1$.

$\Longrightarrow$. Let us choose $\alpha'$ and $\alpha''$, such that $\alpha=\alpha'+\alpha''$, $|\alpha'|=|\alpha|-1$ and $|\alpha''|=1$.
Let $j\in I_{i,\alpha'}$. By the first step, then $k\in I_{j,\alpha''}$. By the induction hypothesis, $i\fleche{\alpha'} j$ and $j\fleche{\alpha''} k$,
so $i\fleche{\alpha} k$.

$\Longleftarrow$. Let us assume that $i\fleche{\alpha} k$. We consider a path in $G$ from $i$ to $k$, of weight $\alpha$.
If $j$ is the last step, there exists $\alpha',\alpha''$, such that $\alpha=\alpha'+\alpha''$, $|\alpha'|=|\alpha|-1$ and $|\alpha''|=1$,
$i\fleche{\alpha'} j$ and $j\fleche{\alpha''} k$. By the induction hypothesis, $j\in I_{i,\alpha'}$ and $k\in I_{j,\alpha''}$.
By the first step, $k\in I_{i,\alpha}$.

Hence:
$$(S):\:\forall i\in [M], \: X_i=\sum_{\alpha \in \NN} B_{i,\alpha}\left(1+\sum_{i\fleche{\alpha} j} X_j\right).$$ 
So $(S)=(S_G)$. The first step implies that $(G)$ is Hopf.\\


$\Longrightarrow$. Let us consider a Hopf $N$-coloured graph $G$. We define a product on the vector space 
$\g=Vect(f_i(\alpha))_{i\in V(G), \alpha \in \NN}$ by:
$$f_j(\beta)*f_i(\alpha)=\begin{cases}
f_i(\alpha+\beta)\mbox{ if }i\fleche{\alpha} j,\\
0\mbox{ otherwise}.
\end{cases}$$
Let $i,j,k\in V(G)$. For any $\alpha,\beta,\gamma \in \NN$:
\begin{align*}
(f_k(\gamma)*f_j(\beta))*f_i(\alpha)&=\begin{cases}
f_i(\alpha+\beta+\gamma)\mbox{ if }i\fleche{\alpha} j \mbox{ and }j\fleche{\beta}k,\\
0\mbox{ otherwise};
\end{cases}\\
f_k(\gamma)*(f_j(\beta)\*f_i(\alpha))&=\begin{cases}
f_i(\alpha+\beta+\gamma)\mbox{ if }i\fleche{\alpha} j \mbox{ and }i\fleche{\alpha+\beta}k,\\
0\mbox{ otherwise}.
\end{cases}
\end{align*}
So $*$ is associative. Hence, $\g$ is a $\N^N$-graded connected pre-Lie algebra. It is not difficult to prove that the graded dual of its enveloping
algebra, imbedded in $\h^D$, is the subalgebra generated by the solution of the SDSE $(S_G)$, which as a consequence is Hopf. \end{proof}\\

An example of coloured graph is given by families of commuting endofunctions:

\begin{prop}
Let $V$ be a set and, for all $1\leq p\leq [N]$, let $f_p:V\longrightarrow V$ be a map. We construct a graph $G_f$ in the following way:
\begin{itemize}
\item $V(G)=V$.
\item For all $i,j \in V$, for all $p\in [N]$, $i\fleche{\epsilon_p} j$ if, and only if, $f_p(i)=j$.
\end{itemize}
In other terms, $G$ is the coloured graph of maps $f_1,\ldots,f_N$. Then $G_f$ is Hopf, if and only if, for all $p,q\in [N]$,
$f_p \circ f_q=f_q\circ f_p$.
\end{prop}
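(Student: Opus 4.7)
The plan is to handle the two implications separately, exploiting that in $G_f$ the relation $i\fleche{\alpha}j$ is controlled by compositions of the maps $f_1,\ldots,f_N$.

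\textbf{Key observation.} For any sequence $(p_1,\ldots,p_n)\in[N]^n$ with exactly $\alpha_p$ occurrences of each $p\in[N]$, the condition $i\fleche{\alpha}j$ in $G_f$ holds exactly when $j=f_{p_n}\circ\cdots\circ f_{p_1}(i)$ for some such sequence. Thus $i\fleche{\alpha}j$ iff $j$ lies in the (finite) set of images of $i$ under all such compositions.

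\textbf{Direction $(\Leftarrow)$: commutativity implies Hopf.} Suppose $f_p\circ f_q=f_q\circ f_p$ for all $p,q$. A straightforward induction on $|\alpha|=\alpha_1+\cdots+\alpha_N$ shows that $f_{p_n}\circ\cdots\circ f_{p_1}(i)$ depends only on $i$ and the multiset $\alpha$, not on the chosen ordering. Writing $f^\alpha:=f_1^{\alpha_1}\circ\cdots\circ f_N^{\alpha_N}$, this gives that $i\fleche{\alpha}j$ is equivalent to $j=f^\alpha(i)$. In particular, for fixed $i$ and $\alpha$, the vertex $j$ with $i\fleche{\alpha}j$ is uniquely determined. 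Now assume $i\fleche{\alpha}j$ and $i\fleche{\alpha+\beta}k$; then $j=f^\alpha(i)$ and $k=f^{\alpha+\beta}(i)=f^\beta(f^\alpha(i))=f^\beta(j)$, so $j\fleche{\beta}k$. This yields the non-trivial implication of the Hopf condition; the other direction was already noted to be automatic.

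\textbf{Direction $(\Rightarrow)$: Hopf implies commutativity.} Fix $i\in V$ and $p,q\in[N]$. Set
\[
j=f_q(i),\qquad k=f_p(f_q(i)),\qquad j'=f_p(i).
\]
Then $i\fleche{\epsilon_q}j$ and $j\fleche{\epsilon_p}k$, so concatenating gives $i\fleche{\epsilon_p+\epsilon_q}k$; also $i\fleche{\epsilon_p}j'$. Applying the Hopf condition with $\alpha=\epsilon_p$ and $\beta=\epsilon_q$ to the pair $(i\fleche{\alpha}j',\ i\fleche{\alpha+\beta}k)$, we obtain $j'\fleche{\epsilon_q}k$, i.e.\ $f_q(j')=k$. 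Unpacking, $f_q(f_p(i))=f_p(f_q(i))$; as $i$ was arbitrary, $f_q\circ f_p=f_p\circ f_q$.

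The only delicate point is the $(\Leftarrow)$ direction, where one needs the order-independence of the composition to conclude uniqueness of the endpoint of a coloured walk; once this is in hand, both implications are immediate. No further classification of $G_f$ is required.
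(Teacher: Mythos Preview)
Your proof is correct and follows essentially the same approach as the paper: for $(\Rightarrow)$ you test the Hopf condition on paths of length one in each colour, exactly as the paper does, and for $(\Leftarrow)$ you use commutativity to reduce $i\fleche{\alpha}j$ to the single equation $j=f^\alpha(i)$. Your presentation of $(\Leftarrow)$ is in fact cleaner than the paper's, which carries out an explicit induction on $|\beta|$ to reach the same conclusion; once uniqueness of the endpoint $f^\alpha(i)$ is observed, that induction is unnecessary.
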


\begin{proof} $\Longrightarrow$. Let us assume that $G_f$ is Hopf. Let $i,j \in V$. We put $j=f_q(i)$, $j'=f_p(i)$,
$k=f_q\circ f_p(i)$, $\alpha=\epsilon_q$ and $\beta=\epsilon_p$. Then $i\fleche{\alpha} j$ and $i\fleche{\alpha+\beta} k$,
so $j\fleche{\beta} k$: hence, $f_p(j)=f_p\circ f_q(j)=k=f_q\circ f_p(i)$.\\

$\Longleftarrow$. Let us assume that $i\fleche{\alpha} j$ and $i\fleche{\alpha+\beta}k$.
There exists a sequence $p_1,\ldots,p_m$ such that $\epsilon_{p_1}+\ldots+\epsilon_{p_m}=\alpha$, 
$f_{p_1}\circ \ldots \circ f_{p_m}(i)=j$. There exists a sequence 
$p_1,\ldots,p_m$ such that $\epsilon_{q_1}+\ldots+\epsilon_{q_{m+n}}=\alpha+\beta$, 
$f_{q_1}\circ \ldots \circ f_{q_{m+n}}(i)=k$. Note that $m=|\alpha|$, and $m+n=|\alpha+\beta|$.
Moreover, the multiset $\{p_1,\ldots,p_m\}$ is included in the multiset $\{q_1,\ldots,q_{m+n}\}$.
We proceed by induction on $n=|\beta|$. If $n=1$, let us assume that $\{q_1,\ldots,q_{m+n}\}\setminus \{p_1,\ldots,p_m\}=\{q_r\}$.
The commutation relation implies that:
$$f_{q_1}\circ \ldots \circ f_{q_{m+n}}(i)=f_r \circ f_{q_1}\circ \ldots f_{q_{r-1}}\circ f_{q_{r+1}}\circ \ldots \circ f_{q_{m+1}}(i).$$
By permuting the $p_j$'s using the commutation relations, we obtain that these two elements are equal. Hence, $f_r(j)=l$,
so $j\fleche{\beta} k$.

Let us assume the result at rank $n-1$. There exists $k'$, such that $i\fleche{\alpha+\beta'} k'$ $k'\fleche{\beta''} k$,
$\beta=\beta'+\beta''$, $|\beta'|=n-1$ and $|\beta''|=1$. By the induction hypothesis, $j\fleche{\beta''} k'$, so $j\fleche{\beta} k$. \end{proof}\\





{\bf Example.}  Let $V=\Z/N\Z$, $N=1$ and:
$$f:\left\{\begin{array}{rcl}
\Z/N\Z&\longrightarrow&\Z/N\Z\\
\overline{k}&\longrightarrow&\overline{k+1}.
\end{array}\right.$$
The SDSE associated to $f$ is a cyclic SDSE of \cite{Foissy3,Foissy6}.\\

{\bf Remark.} There are other examples of coloured Hopf graphs, for example:
$$\xymatrix{\bullet\ar[d]^1\ar[rd]^1&\bullet\ar[d]^2\\ \bullet&\bullet}$$

\bibliographystyle{amsplain}
\bibliography{biblio}
\end{document}